\newtheorem{theorem}{Theorem}[section]
\newtheorem{lemma}{Lemma}[section]
\newtheorem{proposition}{Proposition}[section]
\numberwithin{equation}{section}
\numberwithin{figure}{section}
\makeatletter \@addtoreset{equation}{section} \makeatother
\newcommand{\R}{\mathbb{R}}
\newcommand{\T}{\mathbb{T}}
\newcommand{\eps}{\mathbb{\varepsilon}}
\newcommand{\al}{\alpha}
\newcommand{\dive}{\mathrm{div}}
\newcommand{\pt}{\partial_t}
\newcommand{\pa}{\partial}
\newcommand{\pie}{\prime}
\newcommand{\cubic}{C\norm{W}_s\norm{U}_s^2}
\newcommand{\cubict}{C\int_0^T\norm{W(t)}_s\norm{U(t)}_s^2\mathrm{d}t}
\newcommand{\D}{\nabla}
\newcommand{\N}{\mathcal{N}}
\newcommand{\U}{\mathcal{U}}
\newcommand{\F}{\mathcal{F}}
\newcommand{\G}{\mathcal{G}}
\newcommand{\V}{\mathcal{V}}
\newcommand{\W}{\mathcal{W}}
\newcommand{\E}{\mathcal{E}}
\newcommand{\mD}{\mathcal{D}}
\newcommand{\norm}[1]{{\left\vert\kern-0.25ex\left\vert\kern-0.25ex\left\vert #1\right\vert\kern-0.25ex\right\vert\kern-0.25ex\right\vert}}
\begin{document}
	\title[infinity-ion-mass limits for Euler-Poisson system]{\bf Approximations of Euler-Maxwell systems by drift-diffusion equations through zero-relaxation limits near non-constant equilibrium}

	\author{Rui Jin}
	\address[R. Jin]{School of Mathematical Sciences, 
	Shanghai Jiao Tong University, Shanghai 200240, P. R. China}
	\email{\tt jinrui@sjtu.edu.cn}
	
	\author{Yachun Li}
	\address[Y. Li]{School of Mathematical Sciences, CMA-Shanghai, MOE-LSC, and SHL-MAC, Shanghai Jiao Tong University, Shanghai 200240, P. R. China}
	\email{\tt ycli@sjtu.edu.cn}
	
    \author{Liang Zhao}
    \address[L. Zhao]{Mathematical Modelling \& Data Analytics Center, Oxford Suzhou Centre for Advanced Research, Suzhou 215123, P.R. China}
    \email{\tt liang.zhao@oxford-oscar.cn}
	
	\keywords{Global convergence rate; Euler-Maxwell system; Euler-Poisson system; non-constant equilibrium state; zero-relaxation limit}
	\date{\today}
	\subjclass[2020]{35B25, 35L45, 35Q60, 35K45}
	\maketitle \markboth{Approximations of the Euler-Maxwell near non-constant equilibrium}{R. Jin, Y. Li and L. Zhao}
	
\begin{abstract}
    Due to extreme difficulties in numerical simulations of Euler-Maxwell equations, which are caused by the highly complicated structures of the equations, this paper concerns the simplification of Euler-Maxwell system through the zero-relaxation limit towards the drift-diffusion equations with non-constant doping functions. We carry out the global-in-time convergence analysis by establishing uniform estimates of solutions near non-constant equilibrium regarding the relaxation parameter and passing to the limit by using classical compactness arguments. Furthermore, stream function methods are carefully generalized to the non-constant equilibrium case, with which as well as the anti-symmetric structure of the error system and an induction argument, we establish global-in-time error estimates between smooth solutions to the Euler-Maxwell system and those to drift-diffusion system, which are bounded by some power of relaxation parameter. 
\end{abstract}

\section{Introduction}
A plasma is a collection of moving electrons and ions.  In modern plasma industries, the numerical simulations of particle motions in plasma environments become more and more important. Mathematically speaking, the movements of electrons and ions in plasma can both be regarded as fluid motions, which can be modelled by Euler and Maxwell equations coupled through Lorentz forces. These equations are known as the two-fluid Euler-Maxwell system. Nevertheless, due to the complex mathematical structure of this system, simplifications should be made if one needs to carry out numerical simulations. Since electrons move faster, ions are often regarded as non-moving and perform as the background density. Consequently, on the scale of electrons, the equations for ions can be neglected (see \cite{Zhao2021}). Thus, the simplified one-fluid Euler-Maxwell system for electrons can be written into the form \cites{Besse2004,Chen1984,Chen2000}:
\begin{equation}\label{EM1}
	\begin{cases}
		{\pa _{t'}}n + \dive(nu) = 0,\\
		{\pa _{t'}}(nu) + \dive(nu \otimes u) + \D P(n) =  - n (E + u \times B) - \dfrac{nu}{\eps},\\
		{\pa _{t'}}E - \D  \times B = nu,\quad \dive E = b(x) - n ,\\
		{\pa _{t'}}B + \D  \times E = 0,\quad \dive B = 0,
	\end{cases}
\end{equation}
with $(t',x)\in\R^+\times \mathbb{K}^3$. The initial data are given by
\[
t' = 0:(n ,u,E,B) = (n_0 ,u_0 ,E_0 ,B_0)(x),\quad x \in \mathbb{K}^3.
\]
Here $x=(x_1,x_2,x_3)^\top$ and $t'>0$ are the space and usual time variables respectively, $\mathbb{K}=\mathbb{R}$ for Cauchy problems and $\mathbb{K}=\T$ for periodic problems with $\T^3$ a torus in $\mathbb{R}^3$. The unknowns are $n$, $u=(u_1,u_2,u_3)^\top$, $E=(E_1,E_2,E_3)^\top$ and $B=(B_1,B_2,B_3)^\top$, which denote electron density, electron velocity, electric field and magnetic field, respectively. They are all functions of $(t',x)$. The small parameter $\eps \in (0,1]$ denotes the relaxation time. The pressure function $P(n)$ is supposed to be sufficiently smooth and strictly increasing for all $n>0$. The given function $b(x)$ is the doping profile, which describes the distribution of background densities. We assume that there exists a positive constant $b_1>0$, such that
\begin{equation}\label{assb}
	b(x)\in L^{\infty}(\mathbb{K}^3), \quad b(x)\ge b_1>0\quad\text{and}\,\,\quad \nabla b\in H^{q'}(\mathbb{K}^3), \quad \text{with}\,\,q'\geq 3.
\end{equation}

However, due to the hyperbolic structure of \eqref{EM1}, the numerical simulations of which  still
face great challenges, in which high computing power, special algorithms and strong techniques are needed, especially for multi-dimensional cases (see Ref.~\cite{Degond2012}). In this paper, we wish to further simplify \eqref{EM1} under the zero-relaxation limit $\eps\to 0$. The  limit system is the classical drift-diffusion equations, which is parabolic-elliptic type and easier to carry out numerical simulations. System \eqref{EM1} fills in the framework of the famous Jin-Xin model \cite{Jin1995} and we refer readers to
Whitham,\cite{Whitham1974} Liu \cite{Liu1987} Serre \cite{Serre2000} and references cited therein for various results for relaxation corrections for conservation laws. The limit process can be described formally as follows. When the slow time scale $t=\eps t'$ is introduced and the following parabolic scaling is made,
$$(n^{\eps},E^\eps,B^\eps)(t,x)=(n,E,B)\left(t/\eps,x\right),\quad \eps u^{\eps}(t,x)=u\left(t/\eps,x\right),$$
the Euler-Maxwell system \eqref{EM1} becomes
\begin{equation}\label{EM2}
	\left\{ \begin{array}{l}
		{\pt}n^{\eps} + \dive(n^{\eps}u^{\eps}) = 0,\\
		\eps^2 \pt (n^\eps {u^{\eps}})+\eps^2 \dive(n^\eps u^\eps\otimes u^\eps)+\D{P(n^{\eps})}=-n^\eps(E^{\eps} +\eps u^{\eps} \times B^{\eps}) - n^\eps u^{\eps},\\
		\eps {\pt}E^{\eps} -\D\times B^{\eps} = \eps n^{\eps}u^{\eps},\quad \dive E^{\eps} = b(x) - n^{\eps} ,\\
		\eps{\pt}B^{\eps} + \D  \times E^{\eps} = 0,\quad \dive B^{\eps} = 0,\\
	\end{array} \right.
\end{equation}
with the initial conditions
\begin{equation}\label{EM2initial}
	t= 0:\left(n^{\eps} , u^{\eps},E^{\eps},B^{\eps}\right):= \left(n_0^\eps,u_0^\eps ,E_0^\eps ,B_0^\eps \right)(x)=\left(n_0,u_0/\eps ,E_0 ,B_0 \right)(x).
\end{equation}
When $\eps\rightarrow0$, if denoting the limits of $(n^\eps, u^\eps, E^\eps, B^\eps)$ as $(\bar{n},\bar{u},\bar{E},\bar{B})$, the formal limit system is of the form:
\begin{equation}\label{EM-limit}
	\left\{ \begin{array}{l}
		{\pa _{t}}\bar{n} + \dive(\bar{n}\bar{u}) = 0,\\
		\D{P(\bar{n})}=-\bar{n}\bar{E}-\bar{n}\bar{u},\\
		\D\times\bar{B}=0,\;\dive\bar{E} = b(x) - \bar{n},\\
		\D \times \bar{E} = 0,\;\dive\bar{B}=0.
	\end{array} \right.
\end{equation}
Since $\D\times\bar{E}=0$, there exists a unique potential function $\bar{\phi}$ satisfying $ \bar{E}=\D\bar{\phi}$ with
\begin{equation}\label{conditionphi}
	m_{\bar{\phi}}(t):=\int_{\T^3}\bar{\phi}(t,x)dx=0, \,\text{for}\,x\in\T^3,\,\, \text{or}\,\,  \lim_{|x|\to\infty}\bar{\phi}(t,x)=0,\,\,\text{for}\,\,x\in\mathbb{R}^3.
\end{equation}
Consequently, \eqref{EM-limit} can be rewritten into the classical drift-diffusion model
\begin{equation}\label{EM-drift-diffusion}
	\left\{\begin{array}{l}
		\pt\bar{n}-\Delta P(\bar{n})-\dive(\bar{n}\D\bar{\phi})=0,\\
		\Delta\bar{\phi}=b(x)-\bar{n},
	\end{array}\right.
\end{equation}
with an additional relation:
\begin{equation}\label{baru}
	\bar{u}=-\frac{1}{\bar{n}}\D\left(P(\bar{n})+\bar{\phi }\right).
\end{equation}

To prove rigorously that the simplification is valid globally-in-time in mathematics, one needs to first establish the global-in-time well-posedness of classical solutions to \eqref{EM1}. For $n>0$, system \eqref{EM1} can be regarded as a first-order symmetrizable hyperbolic system, hence the local-in-time existence and uniqueness of smooth solutions can be obtained by standard theories (see \cites{Kato1975,Lax1973,Majda1984}). It is well known that smooth solutions of hyperbolic systems usually exist locally-in-time and singularities may appear in finite time. However, the dissipative structure of the system may prevent the formation of singularities and leads to global-in-time existence of smooth solutions in a neighbourhood of an equilibrium state $W_e$. We refer readers to  Peng-Wang-Gu,\cite{Peng2011}  Xu \cite{Xu2011} and Ueda-Wang-Kawashima\cite{Ueda2012} for $W_e$ being a constant vector (or in other words $b(x)$ is a constant). We also refer to Germain-Masmoudi\cite{Germain2014} and Guo-Ionescu-Pausader\cite{Guo2016} for global existence of smooth solutions without the velocity dissipation term but with generalized irrotationality constraint $B=\D\times u$. However, for physical interest, the case of constant $W_e$ has many limitations. Generally, in the case when $W_e$ is not a constant (in other words $b(x)$ depends on $x$), but a stationary solution to \eqref{EM1} in which the velocity is zero, the global well-posedness theories near $W_e$ for \eqref{EM1} become more complicated. Let $W_e=(n_e,0,E_e,B_e)$ be the equilibrium satisfying
\begin{equation}\label{PHT}
	\begin{cases}
		\D P(n_e)=-n_eE_e, \\
		\D \times B_e=0, \quad \dive E_e=b(x)-n_e, \\
		\D \times E_e=0, \quad \dive B_e=0.
	\end{cases}
\end{equation}
We learn from the above equations that $B_e$ is a constant vector, and $n_e$ satisfies
\begin{equation}\label{1.4}
	-\Delta {h(n_e)}+n_e=b(x),\quad x\in\mathbb{K}^3,
\end{equation}
where $h$ is the enthalpy function defined as
\[
h'(n)=\frac{P'(n)}{n}.
\]
Since $h^\prime>0$, \eqref{1.4} is elliptic. For the global well-posedness of solutions to \eqref{EM1} near non-constant $W_e$, inspired by the ideas in Guo-Strass\cite{Guo2005}, Peng\cite{Peng2015} combined an anti-symmetric matrix technique and an induction argument to obtain global smooth solutions to Euler-Maxwell system. We also refer to Feng-Peng-Wang\cite{Feng2015} and Liu-Peng\cite{Liu2017} for stability problems for two-fluid models or non-isentropic ones. It is noted that these global well-posedness results are not uniform regarding the relaxation parameter $\eps$. For the zero-relaxation limit problem, the local-in-time and the global-in-time convergence of \eqref{EM1} as $\eps\to 0$ were obtained by Hajjej-Peng\cite{Hajjej2012} and Peng-Wang-Gu, \cite{Peng2011} respectively. In the local-in-time convergence result, the convergence rate was clearly shown and it depends on the local existence time. For global convergence, Li-Peng-Zhao\cite{Li2021} established the global-in-time error estimate between \eqref{EM1} and \eqref{EM-drift-diffusion}. All these global-in-time convergence analysis were carried out near constant equilibrium states. For non-constant $W_e$, no result has been reached so far, for either  global convergence or global convergence rates. For other related problems, we refer to \cites{Ali2000,Jungel1999,Peng2008,XuJiang2008} and references cited therein.

As simplification of Euler-Maxwell system, Euler-Poisson system is also an important model in plasma and semiconductors. The approximation mentioned above is also valid for Euler-Poisson system. The 3-D periodic problem for Euler-Poisson system can be written as (see \cites{Chen1984,1990Semiconductor})
\begin{equation}\label{EPorigin}
	\begin{cases}
		\partial_{t^{\prime}} n+\dive{(n u)}=0, \\
		\partial_{t^{\prime}}(n u)+\operatorname{div}(nu \otimes u)+\nabla P(n)=-n \nabla \phi-\dfrac{n u}{\varepsilon}, \\
		\Delta \phi=b(x)-n, \qquad m_{\phi}(t)=0,\\
			t^{\prime}=0: \quad(n, u)=\left(n_{0}, u_{0}\right)(x).
	\end{cases}
\end{equation}
Similarly, we introduce the slow time $t=\eps t^\pie$ and the following parabolic scaling,
\[  n^\eps(t,x)=n\left(t/\eps,x\right), \quad
\eps u^\eps(t,x)=\displaystyle u\left(t/\eps,x\right),
\quad \phi^\eps(t,x)=\phi\left(t/\eps,x\right),   \]
then $(n^\eps,u^\eps,\phi^\eps)$ satisfies the following periodic problem
\begin{equation}
	\label{EPmain}
	\begin{cases}
		\pt n^\eps+\dive ( n^\eps u^\eps)=0,\\
		\eps^2 \pt (n^\eps{u^{\eps}})+\eps^2 \dive(n^\eps u^\eps\otimes u^\eps)+\D{P(n^{\eps})}=-n^\eps \nabla \phi^\eps-n^\eps u^\eps,\\
		\Delta \phi^\eps=b(x)-n^\eps, \\
		t=0: \quad (n^\eps,u^\eps)(0,x):=(n_0^\eps(x),{u_0^\eps(x)})=(n_0(x),u_0(x)/\eps).
	\end{cases}
\end{equation}
By substituting $E_e$ and $\bar{E}$ with $\nabla\phi_e$ and $\nabla\bar{\phi}$ respectively, it holds that  $(n_e,0,\phi_e)$ and $(\bar{n},\bar{u},\bar{\phi})$ also satisfy \eqref{PHT}-\eqref{1.4} and \eqref{EM-drift-diffusion}-\eqref{baru}, respectively. For global well-posedness of solutions near constant or non-constant equilibrium states when $\eps=1$, we refer readers to Guo \cite{Guo1998}, Hsiao-Markowich-Wang \cite{Hsiao2003}, Fang-Xu-Zhang \cite{Fang2007}, Guo-Strauss \cite{Guo2005}, Huang-Mei-Wang \cite{Huang2011}, Germain-Masmoudi-Pausader \cite{Germain2013} and references cited therein. For the zero-relaxation limit problem for \eqref{EPorigin}, we refer to \cites{Lattanzio2000,yong2004diffusive} for the local-in-time convergence and Peng \cite{Peng2015b} for the global-in-time one, respectively. Similarly, for the case of non-constant equilibrium, no result has been reached so far, for either global-in-time convergence or global-in-time convergence rates.

The aim of this paper is to study the zero-relaxation limit $\eps\to 0$ and its global-in-time error estimates for Euler-Maxwell system \eqref{EM1} or Euler-Poisson system \eqref{EPorigin} near the general non-constant equilibrium states $W_e$, which satisfies \eqref{PHT}. The main difficulty appears in establishing the global-in-time error estimates. Usually, they are obtained by energy methods applied for the error system, which is the difference between original system \eqref{EM1} or \eqref{EPorigin} and the limit equations \eqref{EM-drift-diffusion}. However, the error system in our case shows neither hyperbolicity nor parabolicity. This makes it unclear the preservation of symmetrizable hyperbolic structure and the strictly convex entropy of the error system, which we are unable to use to close the estimates. To overcome these difficulties, stream function techniques should be applied. We begin with a review for this method. For a conservative equation
\begin{equation}
	\label{conserv}
	\pt z+\dive w =0,
\end{equation}
we call $\varphi$ a stream function associated to this equation if $\varphi$ satisfies
\begin{equation}\label{streamgeneral}
		\pt \varphi=w+K,\quad \dive \varphi =-z,
\end{equation}
where $K$ is some divergence-free terms. The key idea of this method is to take the inner product of $\varphi$ with terms such as $\D z$ to give an dissipative estimate for $z$. Apparently, the stream function is not unique.  The choice of $K$ is accurate and highly relies on the structure of the system, especially for multi-dimensional cases. In 2002, Junca-Rascle\cite{Junca2002} pioneeringly used this technique to establish the global-in-time $L^2$ error estimates between the 1D Euler equations and the heat equation with general pressure law. Inspired by their idea, there are several subsequent works in which stream function techniques are used to handle relaxation-type limits in Euler-type equations, see \cites{Goudon2013,Li2021,Zhao2021a} for other relevant studies. However, in our case, the non-constant equilibrium state brings additional difficulties compared with constant ones in that $\D n^\eps\neq \D(n^\eps-n_e)$, which yields that $\D n^\eps $ is not a small quantity. Moreover, $\D n_e$ does not depend on $t$ and thus can not provide any integrabilities with respect to $t$. These difficulties make it hard to treat the terms containing $\D n_e$ generated by integration by parts and thus the classical stream function method is not valid.

Our strategies are as follows. We notice that the process of establishing uniform estimates with respect to the small parameter $\eps$ can be regarded as the estimates of the error system between the original system \eqref{EM2} and the equilibrium one \eqref{PHT}. This inspires us to reformulate the error system between \eqref{EM2} and \eqref{EM-drift-diffusion} into an anti-symmetric form, based on which the $L^2$-estimate of $u^\eps-\bar{u}$ is obtained. Furthermore, we generalize the  stream function technique to cases of non-constant equilibrium states, together with which  the induction argument enable us to obtain estimates for $n^\eps-\bar{n}$, $E^\eps-\bar{E}$ and $B^\eps-B_e$. In summary, we need a precise combination of three symmetric structures of Euler-Maxwell systems, i.e., symmetrizable hyperbolic structure, anti-symmetric structure and the symmetric structure of the zeroth order term. It is worth emphasizing that this is highly non-trivial and very different from the classical energy estimates or the case treated in \cite{Li2021}.

This paper is organized as follows. \S 2 introduces preliminaries and main results. \S 3 concerns the global convergence analysis from Euler-Maxwell system to drift-diffusion system. We first establish the uniform estimates of smooth solutions near the non-constant equilibrium states with respect to the small parameter $\eps$, and using the theories of compactness to obtain the global-in-time convergence. \S 4 is devoted to the global-in-time error estimates between smooth solutions to \eqref{EM2} and \eqref{PHT}. The application of our methods to Euler-Poisson system is in \S 5.



\section{Preliminaries and main results}

\subsection{Notations and inequalities.}
For later purpose, we introduce the following notations. We denote  $\|\cdot\|$, $\|\cdot\|_{\infty}$ and $\|\cdot\|_{s}$ the norms of the usual Sobolev spaces $L^{2} :=L^{2}\left(\mathbb{K}^3\right)$, $L^{\infty} :=L^{\infty}\left(\mathbb{K}^3\right)$ and $H^{s} :=H^{s}\left(\mathbb{K}^3\right)$, respectively. $\left<\cdot,\cdot\right>$ stands for the inner product in $L^{2}$. For multi-indices $\al=\left(\al_{1}, \al_{2}, \al_{3}\right) \in \mathbb{N}^{3}$,  we denote
\[
\pa_x^{\al}=\frac{\pa^{|\al|}}{\pa x_{1}^{\al_{1}} \pa x_{2}^{\al_{2}} \pa x_{3}^{\al_{3}}},\quad |\al|=\al_{1}+\al_{2}+\al_{3}.
\]
For any fixed $T>0$, let
$$B_{s, T}(\mathbb{K}^3)=\bigcap_{l=0}^{s} \mathcal{C}^{l}\left([0, T] ; H^{s-l}(\mathbb{K}^3)\right).$$
For all $ t\in[0,T]$, we define the norm
$$\norm{f(t,\cdot)}_{s}^2=\sum_{l+|\al| \le s}\|\pa_{t}^{l} \pa_{x}^{\al} f(t, \cdot)\|^{2},\quad\forall \;f\in B_{s, T}(\mathbb{K}^3).$$

We first state the existence results for equilibrium system \eqref{1.4}, which can be obtained  by a minimization method or a classical Schauder fixed point theorem for $\mathbb{K}=\T$ and by the variational method for Cauchy problem $\mathbb{K}=\mathbb{R}$.

\begin{proposition}(Existence of equilibrium solutions, see \cite{Liu2019})\label{extcequil}
	Let $q\geq 4$ be an integer. Suppose the conditions on $b(x)$ introduced in \eqref{assb} hold. Then there exists a positive constant $\underline{n}>0$, such that \eqref{1.4} admits a unique classical solution $n$ satisfying $n-b\in H^{q-1}(\mathbb{K}^3)$ with $n\geq \underline{n}>0$. In particular, $n\in W^{q-2,\infty}(\mathbb{K}^3)$.
\end{proposition}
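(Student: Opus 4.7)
My plan is to reduce \eqref{1.4} to a single scalar semilinear elliptic problem via the change of unknown $\psi = h(n_e)$, which is legitimate because $h'(n) = P'(n)/n > 0$ makes $h$ a smooth strictly increasing diffeomorphism of $(0,\infty)$ onto its image. The equation becomes
\begin{equation*}
-\Delta \psi + h^{-1}(\psi) = b(x),
\end{equation*}
the Euler--Lagrange equation of the strictly convex functional
\begin{equation*}
J(\psi) = \int_{\mathbb{K}^3}\Bigl(\tfrac{1}{2}|\nabla \psi|^2 + G(\psi) - b\,\psi\Bigr)\,dx,\qquad G'(y) = h^{-1}(y),
\end{equation*}
where strict convexity of $G$ follows from strict monotonicity of $h^{-1}$.

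For the periodic case $\mathbb{K} = \mathbb{T}$, I would apply the direct method of the calculus of variations to $J$ on $H^1(\mathbb{T}^3)$: coercivity comes from the superlinear convex potential $G$ together with $b \in L^\infty$, weak lower semicontinuity is standard, and strict convexity yields uniqueness of the minimizer, which solves the Euler--Lagrange equation weakly. As an alternative, Schauder's fixed-point theorem applied on a convex compact subset of $C(\mathbb{T}^3)$ consisting of functions bounded away from zero, to the map $v \mapsto n = h^{-1}(\Psi)$ where $\Psi$ solves the linear Poisson problem $-\Delta \Psi = b - v$ (with the appropriate mean-value normalization), yields the same conclusion. For the Cauchy problem $\mathbb{K} = \mathbb{R}$, since $b$ need not decay at infinity, I would first subtract a smooth bounded background profile $b_\star$ chosen so that $b - b_\star$ and the residual $b_\star - \Delta h(b_\star) - b$ both lie in $H^{q-1}(\mathbb{R}^3)$ (this extraction is enabled by the hypothesis $\nabla b \in H^{q'}$ with $q'\ge 3$), and then minimize the resulting strictly convex functional for the correction $m := n_e - b_\star \in H^1(\mathbb{R}^3)$.

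The uniform lower and upper bounds on $n_e$ come from a maximum-principle argument applied to the equation rewritten as $n_e - b = \Delta h(n_e)$: at a point where $n_e$ attains its infimum, $\nabla n_e = 0$ and $\Delta n_e \ge 0$, so $\Delta h(n_e) = h'(n_e)\Delta n_e + h''(n_e)|\nabla n_e|^2 \ge 0$, giving $n_e \ge b \ge b_1 > 0$ there and hence globally (for the Cauchy problem one uses in addition $n_e - b \to 0$ at infinity, which follows from $n_e - b \in H^{q-1} \subset C^0$); a symmetric argument gives $n_e \le \|b\|_{L^\infty}$. Once $n_e$ is pinned between two positive constants, smoothness of $h$ and $h^{-1}$ on the range of $n_e$ allows standard composition estimates in $H^s$ with $s > 3/2$ to transfer regularity between $\psi$ and $n_e$. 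A straightforward elliptic bootstrap on $-\Delta h(n_e) = b - n_e$, absorbing $b$-contributions at each step via $\nabla b \in H^{q'}$, then produces $n_e - b \in H^{q-1}(\mathbb{K}^3)$; Sobolev embedding in dimension three subsequently yields $n_e \in W^{q-2,\infty}(\mathbb{K}^3)$.

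The main obstacle I anticipate is the Cauchy case: the absence of decay of $b$ rules out a direct minimization on $H^1(\mathbb{R}^3)$, so the extraction of an appropriate background profile $b_\star$ is delicate and relies crucially on $\nabla b \in H^{q'}(\mathbb{R}^3)$ with $q'\ge 3$, which also drives the elliptic bootstrap to the claimed regularity. All remaining steps are routine applications of variational arguments, maximum principles, and composition estimates.
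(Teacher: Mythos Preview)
Your proposal is correct and follows essentially the same route the paper indicates: the paper does not give a proof but cites \cite{Liu2019} and remarks that the result ``can be obtained by a minimization method or a classical Schauder fixed point theorem for $\mathbb{K}=\mathbb{T}$ and by the variational method for Cauchy problem $\mathbb{K}=\mathbb{R}$,'' which is precisely your strategy (convex minimization of the functional for $\psi=h(n_e)$, or Schauder, plus maximum principle and elliptic bootstrap).
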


The next lemma concerns the estimates for product functions.
\begin{lemma} Let positive integers $ k\leq s$ and multi-indices $1\leq |\al|\leq s$ with $1\leq |\al|+k\leq s$. Let $u,v\in B_{s, T}(\mathbb{K}^3)$. For simplicity, we denote $I:=k+|\al|$. Then,
	\begin{eqnarray}\label{Moser00}
		\|\pt^k\pa_x^\al(uv)-u\pt^k\pa_x^\al v\|&\leq& C\|\D u\|_{s-1}\|\pt^k v\|_{|\al|-1}+C\norm{\pt u}_{s-1}\norm{v}_{I-1}, \\
\label{Moser11}
		\|\pt^k\pa_x^\al (uv)\|&\leq& C\norm{u}_s\norm{v}_{I}.
	\end{eqnarray}
	In addition, when $|\al|=0$, for integers $1\leq l\leq s$, one has
	\begin{equation}\label{Mosertt}
		\|\pt^l(uv)-u\pt^l v\|\leq C\norm{\pt u}_{s-1}\norm{v}_{l-1}, \quad \|\pt^l(uv)\|\leq C\norm{u}_{s}\norm{v}_{l}.
	\end{equation}
\end{lemma}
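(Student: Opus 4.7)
The plan is to prove all four inequalities by expanding the space-time derivatives via the Leibniz rule and then controlling each resulting bilinear term by Hölder's inequality combined with Gagliardo–Nirenberg–Sobolev interpolation. This is the standard route for Moser/Kato-type commutator estimates; the only novelty here is the presence of mixed time and space derivatives, which forces a careful splitting between terms where a time derivative hits $u$ and terms where only space derivatives hit $u$.

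For \eqref{Moser00}, I would first write
\[
\pt^k\pa_x^\al(uv)-u\,\pt^k\pa_x^\al v=\sum_{\substack{(l,\beta)\le(k,\al)\\ (l,\beta)\ne(0,0)}}\binom{k}{l}\binom{\al}{\beta}\,\pt^{l}\pa_x^{\beta}u\cdot \pt^{k-l}\pa_x^{\al-\beta}v,
\]
so that at least one derivative always lands on $u$. I would then split the sum into two pieces according to whether $l\ge 1$ or $l=0$ (in which case $|\beta|\ge 1$). For terms with $l\ge 1$, the factor $\pt^{l}\pa_x^{\beta}u$ is a derivative of $\pt u$ of total order $(l-1)+|\beta|\le s-1$, so one bounds it in terms of $\|\pt u\|_{s-1}$; the companion factor $\pt^{k-l}\pa_x^{\al-\beta}v$ has total order $\le I-1$ and is bounded using $\|v\|_{I-1}$. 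For terms with $l=0$ and $|\beta|\ge 1$, the factor $\pa_x^{\beta}u$ is a derivative of $\D u$ of order $|\beta|-1\le s-1$, giving the $\|\D u\|_{s-1}$ piece, while $\pt^{k}\pa_x^{\al-\beta}v$ is controlled by $\|\pt^k v\|_{|\al|-1}$.

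Each bilinear estimate $\|F\cdot G\|_{L^2}\le C\|F\|_{L^{p}}\|G\|_{L^{q}}$ is then converted into Sobolev norms by choosing $(p,q)$ suitably (putting the $L^\infty$ norm on the factor whose derivative count is smaller) and invoking the Gagliardo–Nirenberg inequality in $\mathbb{K}^3$; since $s\ge 3$ in all subsequent uses, one always has enough room to close the interpolation. Inequality \eqref{Moser11} follows from the same Leibniz expansion but keeping the top-order term $u\,\pt^k\pa_x^\al v$, which is trivially bounded by $\|u\|_{L^\infty}\|\pt^k\pa_x^\al v\|\le C\|u\|_s\|v\|_I$ since $s\ge 2$; every other term has been estimated above. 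The two inequalities in \eqref{Mosertt} are the $|\al|=0$ specialisation: in the commutator expansion only the $l\ge 1$ branch survives (there is no space derivative to put on $u$), and the argument above directly yields the claimed bound.

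The only mildly delicate point — really the main technical obstacle — is making sure the interpolation works in the borderline cases where $l$ or $|\beta|$ equals $s/2$, so that neither of the two factors can automatically be placed in $L^\infty$ by Sobolev embedding. This is handled by the standard Gagliardo–Nirenberg trick of distributing the derivatives so that both factors sit in $L^{p}$ with $p$ strictly between $2$ and $\infty$, using the available derivative budget $I\le s$. Once this is verified for the worst-case index pair, all other terms are easier and the constants can be absorbed into a single $C$ depending on $s$ but independent of $u,v,T$.
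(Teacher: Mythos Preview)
Your proposal is correct and follows essentially the same route as the paper's proof: Leibniz expansion, splitting according to whether a time derivative lands on $u$ ($l\ge 1$) or only space derivatives do ($l=0$, $|\beta|\ge 1$), then H\"older plus Sobolev embedding term by term. The paper's argument is slightly more concrete in that it bypasses general Gagliardo--Nirenberg interpolation by using the elementary three-dimensional product estimate $\|fg\|\le C\|f\|_{1}\|g\|_{1}$ (via $L^{6}\times L^{3}$ H\"older) for all intermediate-order terms, reserving the $L^{\infty}\times L^{2}$ pairing only for the extreme cases; but the decomposition and the two-branch split you describe are exactly what the paper does.
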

\begin{proof} The proof is mainly based on the following fact that for $f,g\in H^1$, it holds
	\[
	\|fg\|\leq C\|f\|_{L^6}\|g\|_{L^3}\leq C\|f\|_1\|g\|_1^{1/2}\|g\|^{1/2}\leq C\|f\|_1\|g\|_1.
	\]

	We first prove \eqref{Moser00}. Notice  $\pt^k\pa_x^\al(uv)-u\pt^k\pa_x^\al v$ is composed of terms as:
	\[
	\pt^l\pa_x^\beta u\,\pt^m\pa_x^\gamma v, \quad l+m=k,\,\, |\beta|+|\gamma|=|\al|,\,\, l+|\beta|\geq 1.
	\]
	We treat different cases as follows.
	
	\vspace{3mm}
	
	\noindent \underline{Case A: $k=0$ and $|\al|=I$}. This is the case when no time derivatives are applied. Then \eqref{Moser00} reduces to classical Moser-type calculus inequalities (see \cites{Majda1984,Peng2015}).
	
	\vspace{3mm}
	
	\noindent \underline{Case B: $k\geq 1$, $1\leq |\al|\leq I-1$ and $l=0$}. One has $m=k$ and $|\beta|\geq 1$. Then it holds
	
	\vspace{2mm}
	
	For $|\beta|=1$, one has $|\gamma|=|\al|-1$. Then
	\[
	\|\pa_x^\beta u\,\pt^k\pa_x^\gamma v\|\leq C\|\D u\|_\infty\|\pt^k\pa_x^\gamma v\|\leq C\|\D u\|_{s-1}\|\pt^k v\|_{|\al|-1}.
	\]
	
	For $2\leq |\beta|\leq s-1$, one has $|\gamma|\leq |\al|-2$, then
	\[
	\|\pa_x^\beta u\,\pt^k\pa_x^\gamma v\|\leq C\|\pa_x^\beta u\|_1\|\pt^k\pa_x^\gamma v\|_1\leq C\|\D u\|_{s-1}\|\pt^k v\|_{|\al|-1}.
	\]
	
	For $|\beta|=s$, one has necessarily $|\al|=s, k=0, \gamma=0$. Then
	\[
	\|\pa_x^\beta u\,v\|\leq C\|\pa_x^\beta u\|\|v\|_\infty\leq C\|\D u\|_{s-1}\|v\|_{s-1}=C\|\D u\|_{s-1}\|\pt^k v\|_{|\al|-1}.
	\]
	
	\vspace{3mm}
	
	\noindent \underline{Case C: $k\geq 1$, $1\leq |\al|\leq I-1$, $l\geq 1$ and $|\gamma|=|\al|$}. One has $|\beta|=0$. Then it holds
	
	\vspace{2mm}
	
	For $l=1$, one has $m+|\al|=I-1$. Consequently,
	\[
	\|\pt u\,\pt^m\pa_x^\al v\|\leq C\|\pt u\|_\infty\|\pt^m\pa_x^\al v\|\leq C\norm{\pt u}_{s-1}\norm{v}_{I-1}.
	\]
	
	For $2\leq l\leq s-1$, one has $m+|\al|+1=I-l+1\leq I-1$, then
	\[
	\|\pt^l u\,\pt^m\pa_x^\al v\|\leq C\|\pt^l u\|_1\|\pt^m\pa_x^\al v\|_1\leq  C\norm{\pt u}_{s-1}\norm{v}_{I-1}.
	\]
	
	For $l=s$, one has necessarily $m=|\al|=0$ and thus $I=s$. Then
	\[
	\|\pt^s u\,v\|\leq C\|\pt^s u\|\|v\|_\infty\leq C\norm{\pt u}_{s-1}\|v\|_{s-1}\leq C\norm{\pt u}_{s-1}\norm{v}_{I-1}.
	\]
	
	\vspace{3mm}
	
	\noindent \underline{Case D: $k\geq 1$, $1\leq |\al|\leq I-1$, $l\geq 1$ and $|\gamma|\leq |\al|-1$}. One has $|\beta|\geq 1$. Then apparently $l+|\beta|\geq 2$. Consequently,
	
	\vspace{2mm}
	
	For the case $2\leq l+|\beta|\leq s-1$, one has $m+|\gamma|\leq I-2$. Then
	\[
	\|\pt^l\pa_x^\beta u\,\pt^m\pa_x^\gamma v\|\leq C\|\pt^l\pa_x^\beta u\|_1\|\pt^m\pa_x^\gamma v\|_1 \leq C\norm{\pt u}_{s-1}\norm{v}_{I-1}.
	\]
	
	For the case $l+|\beta|=s$, one has necessarily $I=s$ and $m+|\gamma|=0$. Then,
	\[
	\|\pt^l\pa_x^\beta u\,v\|\leq C\|\pt^l\pa_x^\beta u\|\|v\|_\infty\leq C\norm{\pt u}_{s-1}\|v\|_{s-1}\leq C\norm{\pt u}_{s-1}\norm{v}_{I-1}.
	\]
	
	\noindent Combining all the cases, one has \eqref{Moser00}.
	
	We then prove \eqref{Moser11}. Notice the fact that
	\[
	\|u\pt^k\pa_x^\al v\|\leq C\|u\|_\infty\|\pt^k\pa_x^\al v\|\leq C\norm{u}_{s}\|\pt^k v\|_{|\al|},
	\]
	combining \eqref{Moser00}, one has \eqref{Moser11}. The proof for \eqref{Mosertt} is similar to the Case C treated above, we omitted it here.
\end{proof}

The next inequality concerns the estimates for composite functions.
\begin{lemma} (See \cite{Peng2015})
	Let $f$ be a smooth function and $v \in B_{s, T}(\mathbb{K}^3)$. Then
	\begin{equation*}
		\|\partial_{t}^{k} \partial_{x}^{\alpha} f(v)\| \le C\norm{\partial_{t} v}_{s-1}, \quad \forall\; k \geqslant 1,\quad k+|\alpha| \le s.
	\end{equation*}
	where the constant $C$ may depend continuously on $\|v\|_{s}$ and the given function $f$.
\end{lemma}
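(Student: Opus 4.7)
The strategy is to invoke Fa\`a di Bruno's formula to expand
\begin{equation*}
\pt^k\pa_x^\al f(v) \;=\; \sum c\,f^{(r)}(v)\,\prod_{i=1}^{r}\pt^{k_i}\pa_x^{\al_i}v,
\end{equation*}
where the sum runs over all decompositions with $\sum_i k_i=k$, $\sum_i \al_i=\al$, and $k_i+|\al_i|\ge 1$, and then to estimate each resulting monomial in $L^2$ using the Moser-type product inequalities from the preceding lemma.

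Since $k\ge 1$, in each monomial at least one factor, say the one indexed by $i_0$, satisfies $k_{i_0}\ge 1$. Rewriting this factor as $\pt\bigl(\pt^{k_{i_0}-1}\pa_x^{\al_{i_0}}v\bigr)$ and noting $k_{i_0}+|\al_{i_0}|\le k+|\al|\le s$, its $L^2$-norm is controlled by $\norm{\pt v}_{s-1}$. The remaining $r-1$ factors have total derivative orders summing to at most $s-1$, and their product is absorbed into a constant depending continuously on $\|v\|_s$ by placing the smallest-order factors in $L^\infty$ via the three-dimensional Sobolev embedding, while the next-to-largest ones are placed in $L^p$ for suitable $p<\infty$ in the spirit of the inequality $\|fg\|\le C\|f\|_1\|g\|_1$ used in the previous proof. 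Finally, $\|f^{(r)}(v)\|_\infty$ is controlled by a continuous function of $\|v\|_\infty\le C\|v\|_s$ and of $f$, since $f$ is smooth.

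The principal technical obstacle is the combinatorial bookkeeping: once one factor is designated to carry the extracted $\pt$ and to sit in $L^2$, the remaining $r-1$ factors must be simultaneously placed in spaces embedding into $L^\infty$ (or into $L^p$ spaces compatible with H\"older), which requires $s$ to be large enough (in particular $s\ge 3$) for three-dimensional Sobolev embeddings to apply. This case-by-case distribution is essentially parallel to the one carried out in the proof of the preceding lemma; once it is set up, the composite-function estimate follows immediately from the chain-rule expansion together with iterated application of the Moser-type product inequalities.
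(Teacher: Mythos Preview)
The paper does not prove this lemma; it merely cites \cite{Peng2015}. Your Fa\`a di Bruno approach is the standard one and is correct in outline, but there is a genuine gap in the bookkeeping you describe.

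You single out one factor $\pt^{k_{i_0}}\pa_x^{\al_{i_0}}v$ with $k_{i_0}\ge 1$ to sit in $L^2$ and claim the remaining $r-1$ factors are absorbed into a constant depending only on the \emph{spatial} norm $\|v\|_s$. This fails whenever a second factor also carries a time derivative. For instance, with $f(v)=v^2$, $k=2$, $\al=0$ one has
\[
\pt^2(v^2)=2v\,\pt^2 v + 2(\pt v)^2,
\]
and the second term obeys $\|(\pt v)^2\|\le C\norm{\pt v}_{s-1}^2$, which is quadratic in $\norm{\pt v}_{s-1}$, not linear. No constant depending only on $\|v\|_s$ can repair this.

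The statement becomes correct, and your argument goes through verbatim, if the constant $C$ is allowed to depend on the \emph{full} norm $\norm{v}_s$ rather than on $\|v\|_s$: then each extra time-derivative factor satisfies $\|\pt^{k_i}\pa_x^{\al_i}v\|_{L^\infty}\le C\norm{\pt v}_{s-1}\le C\norm{v}_s$ and is absorbed. This is almost certainly the intended reading (it is how the lemma is applied later in the paper, where $\norm{W}_s$ is uniformly small), and the appearance of $\|v\|_s$ in place of $\norm{v}_s$ is likely a transcription slip. You should flag this dependence explicitly in your write-up.
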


\subsection{Results on Euler-Maxwell system}
\begin{theorem}\label{theorem1}(Global existence and uniform estimates)
	Let $s\ge 3$ and $q\ge s+3$ be integers. There exist constants $\delta>0$ and $C>0$ independent of $\eps$ such that if
	$$ \dive E_0^\eps=b(x)-n_0^\eps,\quad \dive B_0^\eps=0, \quad \|(n_0^\eps-n_e, \eps u_0^\eps,E_0^\eps-E_e,B_0^\eps-B_e)\|_s\le\delta,$$
	system \eqref{EM2} admits a unique global-in-time solution $(n^{\eps}, u^{\eps},E^{\eps},B^{\eps})$ satisfying
	\begin{eqnarray}\label{2-1}
		&&\norm{(n^{\eps}(t)-n_e,\eps u^{\eps}(t),E^{\eps}(t)-E_e,B^{\eps}(t)-B_e)}_s^2\nonumber\\
		&&+\int_0^t \left(\norm{\left(n^{\eps}(\tau)-n_e,u^{\eps}(\tau)\right)}_s^2+\norm{E^{\eps}(\tau)-E_e}_{s-1}^2+\norm{\D\times{B^{\eps}(\tau)}}_{s-2}^2\right) \mathrm{d}{\tau}\nonumber\\
		&\le& C\|(n_0^\eps-n_e,\eps u_0^\eps,E_0^\eps-E_e,B_0^\eps-B_e)\|_s^2, \quad \forall\, t\geq 0.
	\end{eqnarray}
\end{theorem}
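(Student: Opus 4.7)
The approach is the standard three-step scheme: local well-posedness by symmetrizable hyperbolic theory, uniform-in-$\eps$ a priori estimates, and a continuation argument. Set $N^\eps := n^\eps-n_e$, $\widetilde E^\eps := E^\eps-E_e$, $\widetilde B^\eps := B^\eps - B_e$, and reformulate \eqref{EM2} as a quasilinear symmetrizable hyperbolic system for $U^\eps:=(N^\eps,\,\eps u^\eps,\,\widetilde E^\eps,\,\widetilde B^\eps)$ after multiplying the mass equation by $h'(n^\eps)=P'(n^\eps)/n^\eps$. Local-in-time existence in $B_{s,T}$ follows from Kato's theory, the equilibrium coefficients being regular enough by Proposition \ref{extcequil}. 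A standard continuation principle then reduces the proof to establishing the uniform bound \eqref{2-1} under the bootstrap assumption $\sup_{t\in[0,T^*]}\norm{U^\eps(t)}_s\le\delta_0$ for some small $\delta_0$.

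The first step is a symmetrized energy identity. For each $(k,\alpha)$ with $k+|\alpha|\le s$, apply $\pt^k\pa_x^\alpha$ to the four equations of \eqref{EM2} and take the $L^2$ inner products with $h'(n^\eps)\pt^k\pa_x^\alpha N^\eps$, $\pt^k\pa_x^\alpha u^\eps$, $\pt^k\pa_x^\alpha\widetilde E^\eps$ and $\pt^k\pa_x^\alpha\widetilde B^\eps$, respectively. The principal-order transport terms cancel via the Euler symmetrization, the Lorentz coupling $u^\eps\times B^\eps$ vanishes upon dotting with $u^\eps$, the Amp\`ere-Faraday pair cancels after integration by parts, and the relaxation term yields the coercive dissipation $\int_0^t\norm{u^\eps(\tau)}_s^2\,d\tau$. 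The commutators and background-forcing terms generated by derivatives of $n_e$, $E_e$ are controlled by the Moser-type inequalities \eqref{Moser00}-\eqref{Mosertt}, the regularity of $n_e$ from Proposition \ref{extcequil}, and the bootstrap smallness, producing
\begin{equation*}
\norm{U^\eps(t)}_s^2 + \int_0^t\norm{u^\eps(\tau)}_s^2\,d\tau \le C\|U_0^\eps\|_s^2 + C\delta_0\int_0^t\norm{u^\eps(\tau)}_s^2\,d\tau.
\end{equation*}

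The second step extracts dissipation for $N^\eps$ and $\widetilde E^\eps$. Subtracting the equilibrium identity $\D P(n_e)=-n_e E_e$ from the momentum equation of \eqref{EM2} yields
\begin{equation*}
\D\bigl(P(n^\eps)-P(n_e)\bigr) + n^\eps\widetilde E^\eps + N^\eps E_e + n^\eps u^\eps = -\eps^2\pt(n^\eps u^\eps) - \eps^2\dive(n^\eps u^\eps\otimes u^\eps) - \eps n^\eps u^\eps\times B^\eps.
\end{equation*}
Applying $\pt^k\pa_x^\alpha$ with $k+|\alpha|\le s-1$, testing against $\D\pt^k\pa_x^\alpha N^\eps$, and using the Poisson constraint $\dive\widetilde E^\eps=-N^\eps$ to convert $n^\eps\widetilde E^\eps$ into a coercive $\int n^\eps|N^\eps|^2$ contribution, Cauchy-Schwarz followed by absorption of the $\eps$-small and $\D n_e$-lower-order terms gives $\int_0^t\norm{N^\eps(\tau)}_s^2\,d\tau\le C\|U_0^\eps\|_s^2$. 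Rearranging the same identity then recovers $\int_0^t\norm{\widetilde E^\eps(\tau)}_{s-1}^2\,d\tau$, with the single-derivative loss due to the $\D P$ term. Finally, the Amp\`ere law $\D\times B^\eps=\eps\pt E^\eps-\eps n^\eps u^\eps$ together with the elementary bound $\norm{\pt\widetilde E^\eps}_{s-2}\le\norm{\widetilde E^\eps}_{s-1}$ delivers $\int_0^t\norm{\D\times B^\eps(\tau)}_{s-2}^2\,d\tau\le C\|U_0^\eps\|_s^2$.

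The main obstacle is Step 1: the commutators produced when derivatives hit the non-constant background — typically terms of the form $\D n_e\cdot\pa_x^\alpha u^\eps$ — carry neither $\eps$-smallness nor $L^2_t$ integrability, and cannot be closed by a naive Gr\"onwall argument. Following Peng \cite{Peng2015}, the remedy is to exploit the hidden anti-symmetric structure of the zeroth-order electromagnetic coupling between $(N^\eps,u^\eps)$ and $(\widetilde E^\eps,\widetilde B^\eps)$, combined with the symmetric hyperbolic structure; the estimates are arranged inductively, first on pure spatial derivatives (via the classical Moser calculus) and then on the number $k$ of time derivatives, so that at each inductive step the offending background terms are cancelled by cross-terms from the anti-symmetry up to quantities controlled at the previous level. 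Combining the three steps and absorbing the $\delta_0$-proportional contributions into the left-hand side yields \eqref{2-1} for $\delta$ small enough, which closes the bootstrap and delivers the global solution.
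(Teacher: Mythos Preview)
Your overall three-step scheme and your Step~2 are essentially correct and match the paper. The genuine gap is in your description of how Step~1 is rescued.

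First, you have misidentified where the anti-symmetric structure lives. It is \emph{not} in ``the zeroth-order electromagnetic coupling between $(N^\eps,u^\eps)$ and $(\widetilde E^\eps,\widetilde B^\eps)$''. The electromagnetic coupling is handled by the standard Maxwell cancellation you already described. The crucial anti-symmetry is in the zeroth-order matrix $\hat L(n_e)$ coupling $N^\eps$ and $u^\eps$ through $\nabla n_e$ and $\nabla h'(n_e)$: after symmetrization by $A_0(n)=\mathrm{diag}(h'(n),n\mathbb{I}_3)$, the off-diagonal blocks of $\sum_j\partial_{x_j}\widetilde A_j - 2A_0\hat L$ satisfy $B_{12}(n_e,\nabla n_e)^\top=-B_{21}(n_e,\nabla n_e)$ thanks to the identity $P''-2n_e h'' = -(P''-2h')$. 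This is what kills the dangerous $\nabla n_e\cdot\partial^\alpha u^\eps$ terms, but only up to an $O(N^\eps)$ remainder, so it yields a \emph{cubic} bound---and only when no spatial derivatives fall on the coefficients.

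Second, and consequently, your induction is backwards. Since $n_e$ is time-independent, pure \emph{time} derivatives commute exactly with $\hat L(n_e)$, so the anti-symmetry is preserved and one obtains cubic remainders for $\|\partial_t^l W\|$: this is the base case. For mixed derivatives $\partial_t^k\partial_x^\alpha$ with $|\alpha|\ge1$, spatial derivatives hit $n_e$, the anti-symmetry is destroyed, and one is left with unavoidable quadratic terms of the form $\|\partial_t^k N\|_{|\alpha|}^2$ and $\|\partial_t^k F\|_{|\alpha|-1}^2$. The momentum equation (your Step~2 mechanism) then bounds these by $\|\partial_t^k(N,u)\|_{|\alpha|-1}^2 + \eps^4\|\partial_t^{k+1}u\|_{|\alpha|-1}^2$, i.e.\ by quantities with \emph{one fewer spatial derivative and possibly one more time derivative}. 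Iterating on $|\alpha|$ drives everything down to pure time derivatives, where the anti-symmetry closes the estimate. Starting the induction from pure spatial derivatives, as you propose, gives no good base case: there the background commutators are maximal and the anti-symmetric cancellation is unavailable.
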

\begin{theorem}\label{theorem2}(Global convergence)
	Let $(n^{\eps},u^{\eps},E^{\eps},B^{\eps})$ be the global solution obtained in Theorem \ref{theorem1}. Assume $(\bar{n}_0-n_e,\bar{E}_0-E_e)\in H^s\times H^s$ and as $\eps\rightarrow 0$,
	\begin{center}
		$\left(n_0^{\eps}-n_e,E_0^{\eps}-E_e,B_0^{\eps}-B_e\right) \rightharpoonup\left(\bar{n}_0-n_e,\bar{E}_0-E_e,0\right),\quad$weakly in $H^s$.
	\end{center}
	Then there exist functions $(\bar{n},\bar{u},\bar{E})$ with $(\bar{n}-n_e,\bar{E}-E_e)\in L^{\infty}\left(\mathbb{R}^+;H^s\right)$ and $\bar{u}\in L^{2}\left(\mathbb{R}^+;H^s\right)$ such that as $\eps\to 0$,
\begin{align}\label{cov}
	\begin{split}
		(n^{\eps}-n_e,E^{\eps}-E_e)\stackrel{\ast}{\rightharpoonup} (\bar{n}-n_e,\bar{E}-E
		_e),\quad & \text{weakly-*}\,\,\text{in}\,\, L^{\infty}\left(\mathbb{R}^+;H^s\right),\\
		B^{\eps}-B_e \stackrel{\ast}{\rightharpoonup} 0, \quad &\text{weakly-*}\,\,\text{in}\,\, L^{\infty}\left(\mathbb{R}^+;H^s\right),\\
		u^\eps \rightharpoonup \bar{u},\quad &\text{weakly}\,\,\text{in}\,\, L^{2}\left(\mathbb{R}^+;H^s\right),
	\end{split}
\end{align}
	where $\bar{E}=\D\bar{\phi}$ and $(\bar{n},\bar{\phi})$ is the unique global smooth solution to the drift-diffusion system \eqref{EM-drift-diffusion} with the initial condition $\bar{n}(0,x)=\bar{n}_{0}$, and $\bar{u}$ satisfies \eqref{baru}.
\end{theorem}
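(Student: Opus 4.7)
The plan is to combine the uniform estimates from Theorem \ref{theorem1} with classical weak/strong compactness and to pass to the limit in the weak formulation of \eqref{EM2}. From \eqref{2-1}, the families $\{n^\eps-n_e\}$, $\{E^\eps-E_e\}$, $\{B^\eps-B_e\}$ and $\{\eps u^\eps\}$ are bounded in $L^\infty(\R^+;H^s)$ uniformly in $\eps$, while $\{n^\eps-n_e\}$ and $\{u^\eps\}$ are bounded in $L^2(\R^+;H^s)$ and $\{E^\eps-E_e\}$ in $L^2(\R^+;H^{s-1})$. Banach-Alaoglu thus yields a subsequence (still indexed by $\eps$) along which $(n^\eps-n_e,E^\eps-E_e,B^\eps-B_e)\stackrel{*}{\rightharpoonup}(\bar n-n_e,\bar E-E_e,\mathcal{B})$ in $L^\infty(\R^+;H^s)$ and $u^\eps\rightharpoonup\bar u$ in $L^2(\R^+;H^s)$. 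Note also that $\|\eps u^\eps\|_{L^2(\R^+;H^s)}\le \eps C\to 0$, so any product with an explicit $\eps$ factor in front of $u^\eps$ tends to zero.

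To identify the limits of the nonlinear terms $n^\eps u^\eps$, $P(n^\eps)$ and $n^\eps E^\eps$, I would upgrade the weak convergence of $n^\eps$ to strong convergence on bounded space-time cylinders via the Aubin-Lions lemma. The continuity equation gives $\pt n^\eps=-\dive(n^\eps u^\eps)$; the product estimate \eqref{Moser11} together with $n^\eps-n_e\in L^\infty(\R^+;H^s)$ and $u^\eps\in L^2(\R^+;H^s)$ bounds $\pt n^\eps$ uniformly in $L^2_{\mathrm{loc}}(\R^+;H^{s-1})$. Since the embedding $H^s\hookrightarrow H^{s-1}$ is locally compact, Aubin-Lions delivers $n^\eps\to\bar n$ strongly in $L^2_{\mathrm{loc}}(\R^+;H^{s-1}_{\mathrm{loc}})$. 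Combined with the weak convergences above and Sobolev embedding ($s\ge 3$), this is enough to conclude $n^\eps u^\eps\rightharpoonup\bar n\bar u$, $P(n^\eps)\rightharpoonup P(\bar n)$ and $n^\eps E^\eps\rightharpoonup\bar n\bar E$ in the sense of distributions.

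With these convergences I would pass to the distributional limit in each equation of \eqref{EM2}. The continuity equation yields $\pt\bar n+\dive(\bar n\bar u)=0$. In the momentum equation, the terms $\eps^2\pt(n^\eps u^\eps)$, $\eps^2\dive(n^\eps u^\eps\otimes u^\eps)$ and $\eps n^\eps u^\eps\times B^\eps$ all vanish as $\eps\to 0$ because of their explicit $\eps$ prefactors and the preceding uniform bounds, leaving $\D P(\bar n)=-\bar n\bar E-\bar n\bar u$, which yields the velocity relation \eqref{baru}. Faraday's law $\eps\pt B^\eps+\D\times E^\eps=0$ in the limit gives $\D\times\bar E=0$; hence $\bar E=\D\bar\phi$ with $\bar\phi$ normalized by \eqref{conditionphi}, and combined with $\dive\bar E=b(x)-\bar n$ it produces $\Delta\bar\phi=b(x)-\bar n$, so that $(\bar n,\bar\phi)$ solves the drift-diffusion system \eqref{EM-drift-diffusion}.

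The main obstacle is to show that the weak-$*$ limit $\mathcal{B}$ of $B^\eps-B_e$ is identically zero. Passing to the limit in Amp\`ere's law $\eps\pt E^\eps-\D\times B^\eps=\eps n^\eps u^\eps$ gives $\D\times\mathcal{B}=0$, and $\dive\mathcal{B}=0$ is preserved from \eqref{EM2}, so $\mathcal{B}$ is harmonic. On $\R^3$ the $H^s$ regularity forces $\mathcal{B}\equiv 0$ immediately. On $\T^3$ harmonicity only gives that $\mathcal{B}$ is a spatial constant, and one must pin down the constant by integrating $\eps\pt(B^\eps-B_e)+\D\times E^\eps=0$ over $\T^3$ to obtain $\int_{\T^3}(B^\eps-B_e)\,dx=\int_{\T^3}(B_0^\eps-B_e)\,dx$, and then invoking the assumed weak convergence $B_0^\eps-B_e\rightharpoonup 0$ in $H^s$ to conclude $\int_{\T^3}\mathcal{B}\,dx=0$, hence $\mathcal{B}\equiv 0$. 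Finally, the uniqueness of smooth solutions to \eqref{EM-drift-diffusion} with initial datum $\bar n_0$ implies that the limit is independent of the extracted subsequence, so the full family $(n^\eps,E^\eps,B^\eps,u^\eps)$ converges as stated in \eqref{cov}.
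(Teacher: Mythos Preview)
Your proposal is correct and follows essentially the same route as the paper: extract weak/weak-$*$ limits from the uniform bounds \eqref{2-1}, upgrade $n^\eps$ to strong convergence via Aubin--Lions (the paper cites Simon \cite{Simon1987}), and pass to the distributional limit in \eqref{EM2}. Your treatment of $\mathcal{B}=0$ is in fact more careful than the paper's---the paper simply asserts that $\bar G$ is a constant vector and then appeals to the weak convergence of $B_0^\eps-B_e$, whereas you distinguish the $\R^3$ and $\T^3$ cases and, on the torus, track the spatial mean via Faraday's law; you also explicitly invoke uniqueness of the drift--diffusion solution to pass from subsequential to full convergence, which the paper leaves implicit.
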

\begin{theorem}\label{convergence_rate}(Global-in-time convergence rate)
	Let $\mathbb{K}=\T$ and the conditions in Theorems \ref{theorem1} and \ref{theorem2}  hold. Let $(n^\eps,u^\eps,E^\eps,B^\eps)$, $(\bar{n},\bar{u},\bar{E})$ and $(n_e,E_e,B_e)$ be the unique smooth solutions to \eqref{EM2}-\eqref{EM2initial}, \eqref{EM-limit} and \eqref{PHT}, respectively.  Then for any positive constant $p>0$ independent of $\eps$, if
	\begin{equation*}
		\eps \|u_{0}^{\eps}\|_{s-1}+\|E_{0}^{\eps}-\bar{E}_{0}\|_{s-1}+\|B_{0}^{\eps}-B_{e}\|_{s-1} \leq C\eps^{p},
	\end{equation*}
	then for all $\eps\in (0,1]$, one has for $p_1:=\min\{1,p\}$,
	\begin{align}
		& \sup_{t\in \mathbb{R}^{+}}\left(\|(n^\eps-\bar{n},\eps(u^\eps-\bar{u}), E^\eps-\bar{E},B^\eps-\bar{B})(t)\|_{s-1}^{2}\right) \nonumber\\
		&+ \int_{0}^{+\infty}\left(\|(n^\eps-\bar{n},u^\eps-\bar{u}, E^\eps-\bar{E})(t))\|_{s-1}^{2}+\|\D\times B^\eps(t)\|_{s-2}^{2}\right) \mathrm{d}t\leq C\eps^{2 p_{1}}.\nonumber
	\end{align}
\end{theorem}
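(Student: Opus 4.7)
I denote the errors $N:=n^\eps-\bar n$, $U:=u^\eps-\bar u$, $\mathcal{E}:=E^\eps-\bar E$, $\mathcal{B}:=B^\eps-B_e$, and subtract the limit system \eqref{EM-drift-diffusion}--\eqref{baru} from \eqref{EM2}. Since $\D\times\bar E=0$ and $B_e$ is a constant vector, the error system reads
\begin{align*}
& \pt N + \dive(n^\eps U + N\bar u) = 0,\\
& \eps^2\pt(n^\eps u^\eps) + \D\bigl(P(n^\eps)-P(\bar n)\bigr) + n^\eps\mathcal{E} + n^\eps U + N\bar E + N\bar u = R_1,\\
& \eps\pt\mathcal{E} - \D\times\mathcal{B} = \eps n^\eps u^\eps,\qquad \dive\mathcal{E}=-N,\\
& \eps\pt\mathcal{B} + \D\times\mathcal{E} = 0,\qquad \dive\mathcal{B}=0,
\end{align*}
where $R_1:=-\eps^2\dive(n^\eps u^\eps\otimes u^\eps)-\eps n^\eps u^\eps\times B^\eps$ is $\eps$-small and controllable via Theorem~\ref{theorem1}. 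The hypothesis yields $\|(\eps U_0,\mathcal{E}_0,\mathcal{B}_0)\|_{s-1}\le C\eps^{p_1}$ (using $\eps\bar u_0=O(\eps)$ to complete $\eps U_0$) and $\|N_0\|_{s-2}\le C\eps^p$ through $\dive\mathcal{E}_0=-N_0$. The target inequality is
\[
\frac{d}{dt}\|(N,\eps U,\mathcal{E},\mathcal{B})\|_{s-1}^2 + c\bigl(\|(N,U,\mathcal{E})\|_{s-1}^2+\|\D\times\mathcal{B}\|_{s-2}^2\bigr)\;\le\;C\eps^{2p_1}+(\text{controllable cubic terms}).
\]

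\emph{Step 1: $L^2$ energy inequality via the anti-symmetric structure.} I would work in the enthalpy variable, writing $\D(P(n^\eps)-P(\bar n))\approx \bar n\,\D(h(n^\eps)-h(\bar n))$ modulo quadratic remainders, and test the continuity equation against $h(n^\eps)-h(\bar n)$, the momentum equation against $U$, and the Maxwell equations against $\mathcal{E}$ and $\mathcal{B}$. The non-small linear couplings $n^\eps\mathcal{E}\cdot U$, $N\bar E\cdot U$, $N\bar u\cdot U$ must be handled via the anti-symmetric matrix reformulation of Peng~\cite{Peng2015}: pairing $n^\eps\mathcal{E}\cdot U$ with the cross term arising from $\dive\mathcal{E}=-N$ and regrouping against the Amp\`ere-type equation turns the off-diagonal contributions into a single exact time derivative plus an anti-symmetric bilinear form that vanishes under integration by parts. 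The outcome is an $L^2$ inequality with dissipation on $U$ alone.

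\emph{Step 2: recovering dissipation on $N$, $\mathcal{E}$, $\D\times\mathcal{B}$ through a generalized stream function.} Following the template \eqref{conserv}--\eqref{streamgeneral}, I would construct $\Phi$ with $\dive\Phi=-N$ and $\pt\Phi=n^\eps U+N\bar u+K$, the divergence-free corrector $K$ to be chosen. Testing the momentum equation against $\Phi$ produces
\[
\bigl\langle \D\bigl(P(n^\eps)-P(\bar n)\bigr),\Phi\bigr\rangle = -\bigl\langle P(n^\eps)-P(\bar n),\dive\Phi\bigr\rangle \ge c\|N\|^2,
\]
so the missing $\|N\|^2$-dissipation appears; cross-testing the Amp\`ere error equation against $\mathcal{E}$ combined with $\dive\mathcal{E}=-N$ then supplies $\|\mathcal{E}\|^2$, while applying $\D\times$ to the Faraday error equation recovers $\|\D\times\mathcal{B}\|_{s-2}^2$. \emph{Step 3: induction on $k+|\al|\le s-1$.} I apply $\pt^k\pa_x^\al$ to the error system and repeat Steps~1--2 at each level, controlling commutators with the Moser-type inequalities \eqref{Moser00}--\eqref{Mosertt} and the uniform bound \eqref{2-1}. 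A Gronwall argument then yields the announced $C\eps^{2p_1}$-rate.

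The principal obstacle, flagged just before \eqref{conserv}, is the choice of $K$ in Step~2. In the constant-equilibrium case one has $\nabla n_e=0$ and the classical selection of~\cite{Li2021} works; here the integrations by parts produce a term $\langle\nabla\bar n,\Phi\rangle$ that is neither small nor $t$-integrable, so the naive choice fails. My plan is to exploit the elliptic relation \eqref{1.4} to rewrite $\nabla\bar n$ as $\nabla\Delta h(\bar n)-\nabla b$ and then reroute the resulting term through $\dive\mathcal{E}=-N$, effectively trading the bad $\nabla\bar n$-contribution for a controllable $\mathcal{E}$-term plus a quantity depending only on the fixed doping profile; this is the ``careful generalization'' of the stream function alluded to in the introduction. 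With this cancellation in place the cubic nonlinearities are absorbed through Theorem~\ref{theorem1}, and the exponent $p_1=\min\{1,p\}$ in the conclusion reflects the irreducible $\eps^2$-prefactor in $\eps^2\pt(n^\eps u^\eps)$, which caps the achievable rate at $\eps^2$ regardless of how well-prepared the data are.
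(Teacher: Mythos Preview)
Your overall architecture---anti-symmetric $L^2$ estimate, stream function for the missing dissipation, induction to higher order---matches the paper's, but Step~2 contains two genuine gaps.

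First, you miss that the stream function is already handed to you: $\mathcal{E}=E^\eps-\bar E$ itself satisfies $\dive\mathcal{E}=-N$, and its time evolution follows from the Amp\`ere error equation once you recover $\partial_t\bar E$. The paper does this by noting $\dive(\partial_t\bar E-\bar n\bar u)=0$ and introducing an auxiliary potential $\bar H$ with $\partial_t\bar E-\bar n\bar u=\nabla\times\bar H$, $\dive\bar H=0$, $m_{\bar H}=0$; elliptic estimates on $\T^3$ give $\bar H\in L^\infty H^s$ and $\partial_t\bar H\in L^2 H^s$. Then $\partial_t\mathcal{E}=(nu-\bar n\bar u)+\eps^{-1}\nabla\times\mathcal{B}-\nabla\times\bar H$, and testing the Amp\`ere--Faraday error pair against $(\mathcal{E},\mathcal{B})$ directly yields the $\|\mathcal{E}_\alpha\|^2+\|\mathcal{B}_\alpha\|^2$ energy. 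There is no free corrector $K$ to choose; your search for one is the wrong problem.

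Second, your proposed fix for the $\nabla\bar n$ obstacle is incorrect: equation \eqref{1.4} governs the equilibrium $n_e$, not the drift-diffusion solution $\bar n$, so you cannot write $\nabla\bar n=\nabla\Delta h(\bar n)-\nabla b$. The dangerous $\nabla\bar n$ contribution actually sits in the commutator $K^\alpha:=\langle\mathcal{E}_\alpha,\partial_x^\alpha(\bar n\,U)-\bar n\,U_\alpha\rangle$ left over after combining the anti-symmetric and stream-function pieces. The paper disposes of it by integrating by parts once in $x$ (as in \eqref{treatF}) to shift a derivative from the $\bar n$-commutator onto $\mathcal{E}$, producing a bound $\eta\|U\|_{s-1}^2+C\|\mathcal{E}\|_{|\alpha|-1}^2$; since the $\mathcal{E}$-term is one order lower, an induction purely on the spatial order $|\alpha|$ (not on $k+|\alpha|$) closes. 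Your plan to reroute through \eqref{1.4} and $\dive\mathcal{E}=-N$ would not achieve this order reduction, and running the induction with time derivatives would additionally require bounds on $\partial_t^2\bar u$, $\partial_t^2\bar E$ that Lemma~\ref{lemmaA} does not supply.
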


\subsection{Results on Euler-Poisson system}
\begin{theorem}\label{theorem5.1}(Global-in-time existence and convergence)
	Let $s\ge 3$ and $q\ge s+3$ be integers.  There exist constants $\delta>0$ and $C>0$ independent of $\eps$ such that if
	$$ \|(n_0^\eps-n_e, \eps u_0^\eps)\|_s\le\delta,$$
	system \eqref{EPmain} admits a unique global-in-time solution $(n^{\eps}, u^{\eps},\phi^{\eps})$ satisfying:
	\begin{eqnarray}\label{5.1.1}
		&&\norm{\left(n^\eps(t,\cdot)-n_e,\eps u^\eps(t,\cdot),\nabla\phi^\eps(t,\cdot)-\nabla\phi_e\right)}_s^2
		+\int_{0}^{t}\norm{(n^\eps(\tau,\cdot)-n_e,u^\eps(\tau,\cdot))}_s^2 \mathrm{d}{\tau}\nonumber\\
		&\le& C\|(n_0^\eps-n_e,\eps u_0^\eps)\|_s^2, \quad \forall\, t\geq 0.
	\end{eqnarray}
	Furthermore, assume $\bar{n}_0-n_e\in H^s$ and as $\eps\rightarrow 0$,
	\begin{equation*}
		n_0^\eps-n_e \rightharpoonup \bar{n}_0-n_e,
		\qquad  \text{weakly}\; \text{in}\;  H^s,
	\end{equation*}
	then there exist functions $(\bar{n},\bar{u},\bar{\phi})$ with $(\bar{n}-n_e,\D\bar{\phi}-E_e)\in L^{\infty}\left(\mathbb{R}^+;H^s\right)$ and $\bar{u}\in L^{2}\left(\mathbb{R}^+;H^s\right)$ such that as $\eps\to 0$,
	\begin{align}\label{5.1.2}
		\begin{split}
		n^\eps-n_e\overset{*}{\rightharpoonup}\bar{n}-n_e,
		\quad&\text{weakly-}* \; \text{in} \; L^\infty(\R^+;H^s),\\
		\nabla\phi^\eps-E_e\overset{*}{\rightharpoonup}\nabla\bar{\phi}-E_e,
		\quad&\text{weakly-}* \; \text{in} \; L^\infty(\R^+;H^s),\\
u^\eps \rightharpoonup \bar{u}, \quad &\text{weakly} \;
		\text{in} \; L^2(\R^+;H^s),
		\end{split}
	\end{align}
	where $(\bar{n},\bar{\phi})$ is the unique global smooth solution to \eqref{EM-drift-diffusion} and $\bar{u}$ satisfies \eqref{baru}.
\end{theorem}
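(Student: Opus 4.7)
The proof follows the same methodology as Theorems \ref{theorem1} and \ref{theorem2}, but with substantial simplifications because the Maxwell block is replaced by the single Poisson equation $\Delta\phi^\eps = b(x)-n^\eps$, so the electric field $\nabla\phi^\eps$ is automatically curl-free and there is no magnetic field to track. My plan is first to establish a closed uniform-in-$\eps$ $H^s$ estimate for the perturbation around $(n_e,0,\phi_e)$ in the combined norm appearing on the left of \eqref{5.1.1}, and then to pass to the limit via compactness.

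I would start by introducing perturbation variables $N^\eps=n^\eps-n_e$, $U^\eps=\eps u^\eps$, $\Psi^\eps=\phi^\eps-\phi_e$, and subtracting \eqref{PHT} (written with $E_e=\nabla\phi_e$) from \eqref{EPmain}. Using $\nabla P(n_e)=-n_e\nabla\phi_e$, the momentum equation becomes
\begin{equation*}
\eps\partial_t U^\eps+\eps\dive(u^\eps\otimes U^\eps)+\nabla\bigl(h(n_e+N^\eps)-h(n_e)\bigr)(n_e+N^\eps)=-N^\eps\nabla\phi_e-(n_e+N^\eps)\nabla\Psi^\eps-(n_e+N^\eps)u^\eps,
\end{equation*}
with $\Delta\Psi^\eps=-N^\eps$. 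For $n^\eps>0$, the first-order part of the system is symmetrizable hyperbolic, so local existence in $B_{s,T}$ follows from classical theory (\cite{Kato1975, Majda1984}). The uniform estimate is then obtained by applying $\partial_t^k\partial_x^\alpha$ for $k+|\alpha|\le s$, multiplying by the standard symmetrizer (weights $h'(n^\eps)/n^\eps$ and $1/n^\eps$), and using the Moser-type bounds \eqref{Moser00}--\eqref{Mosertt} together with the composite-function estimate. The relaxation term yields the dissipation $\int_0^t\|u^\eps(\tau)\|_s^2\,d\tau$ directly, and elliptic regularity for the Poisson equation gives $\|\nabla\Psi^\eps\|_s\le C\|N^\eps\|_{s-1}$, so $\nabla\phi^\eps-\nabla\phi_e$ is controlled pointwise in time by $N^\eps$ with no additional work.

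The critical step, just as in the Euler--Maxwell case, is recovering the dissipation $\int_0^t\|N^\eps(\tau)\|_s^2\,d\tau$. I would exploit the anti-symmetric/stream-function structure generalized to non-constant equilibria: rewrite the momentum equation as
\begin{equation*}
n^\eps u^\eps+\nabla\bigl(h(n^\eps)-h(n_e)\bigr)\,n^\eps+n^\eps\nabla\Psi^\eps=-\eps^2\partial_t(n^\eps u^\eps)-\eps^2\dive(n^\eps u^\eps\otimes u^\eps),
\end{equation*}
pair it in $L^2$ with $\nabla(h(n^\eps)-h(n_e))+\nabla\Psi^\eps$, integrate by parts using the continuity equation and $\Delta\Psi^\eps=-N^\eps$, and bound the $\eps^2$ time-derivative term by the dissipation of $u^\eps$ after another integration by parts in $t$. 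Because $\nabla\phi^\eps$ and $\nabla\phi_e$ are both curl-free, the delicate $\nabla n_e$-terms that caused trouble in the Euler--Maxwell analysis enter here through $\nabla\phi_e=-\nabla h(n_e)$ and combine cleanly with $\nabla(h(n^\eps)-h(n_e))$ into the single quantity $\nabla h(n^\eps)$, whose $L^2$-pairing with $\nabla\Psi^\eps$ produces $\|N^\eps\|^2$ after using $\Delta\Psi^\eps=-N^\eps$; higher-order versions follow by induction on the order of derivatives as in \S 3--\S 4. Combined with smallness of the initial perturbation, the continuation argument promotes local to global existence and gives \eqref{5.1.1}. This extraction of density dissipation in the non-constant equilibrium setting is the main technical obstacle; the rest is bookkeeping.

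For the convergence statement \eqref{5.1.2}, the uniform bound \eqref{5.1.1} implies that $\{n^\eps-n_e\}$ and $\{\nabla\phi^\eps-E_e\}$ are bounded in $L^\infty(\R^+;H^s)$ while $\{u^\eps\}$ is bounded in $L^2(\R^+;H^s)$, so by Banach--Alaoglu one extracts a subsequence converging in the weak-$*$ senses stated. Continuity gives $\partial_t n^\eps=-\dive(n^\eps u^\eps)$ bounded in $L^2(\R^+;H^{s-1})$, so Aubin--Lions upgrades $n^\eps-n_e$ to strong convergence in $L^2_{\mathrm{loc}}(\R^+;H^{s-1}_{\mathrm{loc}})$; elliptic regularity then gives strong convergence of $\nabla\phi^\eps$. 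The momentum equation forces $\eps^2\partial_t(n^\eps u^\eps)+\eps^2\dive(n^\eps u^\eps\otimes u^\eps)\to 0$ in distributions, so passing to the limit yields $\nabla P(\bar n)=-\bar n\nabla\bar\phi-\bar n\bar u$, i.e.\ \eqref{baru}; passing to the limit in the continuity equation and Poisson equation recovers \eqref{EM-drift-diffusion}. Uniqueness of the smooth solution to \eqref{EM-drift-diffusion} with initial datum $\bar n_0$ identifies the limit and shows the whole family converges, completing the proof.
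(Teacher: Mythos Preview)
Your plan is workable and the convergence half matches the paper, but you misidentify where the simplification relative to Euler--Maxwell actually occurs, and your route to the uniform estimate is more circuitous than necessary.

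The paper's proof of \eqref{5.1.1} is almost entirely by reference to Section~3: the Euler block for $(N,u)$ in \eqref{EPmain} is literally \eqref{EM30}$_1$--\eqref{EM30}$_2$ with $B=0$, so Lemmas~\ref{lemma3.1}--\ref{Lemma3.3} and the induction leading to \eqref{middle4} carry over verbatim \emph{except} at the one place the Maxwell block is used, namely the coupling term $2\langle\partial_t^k\partial_x^\alpha(nu),\partial_t^k F_\alpha\rangle$ that in \eqref{maxwell-t} was converted into $\tfrac{d}{dt}(\|\partial_t^k F_\alpha\|^2+\|\partial_t^k G_\alpha\|^2)$. In the Poisson case, writing $F=-\nabla\Phi$, $\Delta\Phi=-N$, $\partial_t N=-\dive(nu)$, one computes directly
\[
\langle\partial_t^k\partial_x^\alpha(nu),\partial_t^k F_\alpha\rangle
=-\langle\partial_t^{k+1}\partial_x^\alpha N,\partial_t^k\partial_x^\alpha\Phi\rangle
=\langle\partial_t^{k+1}\Delta\partial_x^\alpha\Phi,\partial_t^k\partial_x^\alpha\Phi\rangle
=-\tfrac12\tfrac{d}{dt}\|\partial_t^k F_\alpha\|^2,
\]
which plays exactly the role of \eqref{maxwell-t} with $G=0$. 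That single identity is the entire new content; everything else---in particular the handling of the $\nabla n_e$ terms via the anti-symmetry \eqref{antisymm}---is unchanged from the Euler--Maxwell analysis.

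Your assertion that the $\nabla n_e$ terms ``combine cleanly with $\nabla(h(n^\eps)-h(n_e))$ into the single quantity $\nabla h(n^\eps)$'' because the fields are curl-free is not the operative mechanism. The non-constant-equilibrium difficulty is resolved in \emph{both} systems by the anti-symmetric structure of $B(U,\nabla U)$ at the equilibrium (Lemma~\ref{lemma3.1}) together with the induction of Lemma~\ref{lemma3.2}, not by any algebraic collapse special to the Poisson case. The pairing you propose with $\nabla(h(n^\eps)-h(n_e))+\nabla\Psi^\eps$ is essentially the content of \eqref{3.17}--\eqref{base2} and Lemma~\ref{Lemma3.3}, which already hold for Euler--Maxwell; it is not a new shortcut. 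So your proof would go through, but the narrative about what is special to Euler--Poisson should be replaced by the Poisson identity above for the coupling term.
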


\begin{theorem}\label{theorem5.2}(Global-in-time convergence rate)
	Let the conditions in Theorem \ref{theorem5.1} hold. Let $(n^\eps,u^\eps,\phi^\eps)$, $(\bar{n},\bar{u},\bar{\phi})$, and $(n_e,E_e)$ be the unique smooth solutions to \eqref{EPmain}, \eqref{PHT} and \eqref{EM-drift-diffusion}-\eqref{baru}, respectively. Assume for any given positive constant $q>0$ independent of $\eps$, it holds
	\begin{equation*}
		\|n_0^\eps-n_e\|_{s-1}+\eps \|u_0^\eps\|_{s-1}\le C\eps^q,
	\end{equation*}
	then for all $\eps\in (0,1]$, one has for $q_1:=\min\{q,1\}$:
	\begin{eqnarray}\label{EPerror}
		&\quad&\sup_{t\in\R^+}\left(\|(n^\eps(t)-\bar{n},\eps (u^\eps-\bar{u}),\nabla \phi^\eps-\nabla\bar{\phi})(t)\|_{s-1}^2\right)\nonumber\\
		&&+\int_0^{+\infty}\left(\|(n^\eps(t)-\bar{n},\eps (u^\eps-\bar{u}),\nabla \phi^\eps-\nabla\bar{\phi})(t)\|_{s-1}^2\right)\mathrm{d}t\le C\eps^{2q_1}.
	\end{eqnarray}
\end{theorem}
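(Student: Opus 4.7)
My plan is to adapt the error-estimate strategy developed for Theorem~\ref{convergence_rate} to the simpler Euler-Poisson setting, exploiting the fact that here the Poisson potential itself supplies a natural stream function for the continuity error. Introduce the error variables $N^\eps := n^\eps-\bar n$, $U^\eps := u^\eps-\bar u$, $\Phi^\eps := \phi^\eps-\bar\phi$, and the enthalpy error $H^\eps := h(n^\eps)-h(\bar n)$ with $h'(n)=P'(n)/n$. Subtracting the limiting relation $\bar u = -\D h(\bar n)-\D\bar\phi$ (which is \eqref{baru} in enthalpy form) from the scaled momentum equation of \eqref{EPmain} divided by $n^\eps$, and similarly subtracting the continuity and Poisson equations, one obtains
\begin{equation*}
\pt N^\eps + u^\eps\cdot\D N^\eps + n^\eps\dive U^\eps = -U^\eps\cdot\D\bar n - N^\eps\dive\bar u,
\end{equation*}
\begin{equation*}
\eps^2\pt U^\eps + \eps^2(u^\eps\cdot\D)U^\eps + \D H^\eps + \D\Phi^\eps + U^\eps = -\eps^2\pt\bar u - \eps^2(u^\eps\cdot\D)\bar u,
\end{equation*}
\begin{equation*}
\Delta\Phi^\eps = -N^\eps,\qquad m_{\Phi^\eps}(t)=0.
\end{equation*}
The right-hand sources are $O(\eps^2)$ in the norms controlled by Theorem~\ref{theorem5.1}. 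Crucially, $\dive\D\Phi^\eps = -N^\eps$, so $\D\Phi^\eps$ itself plays the role of the stream function associated to the continuity error.

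The first step is the anti-symmetric $L^2$-energy estimate. Multiplying the continuity error by $H^\eps/n^\eps$ and the momentum error by $U^\eps$, then adding, the highest-order cross terms cancel via the enthalpy identity $\D P(n)=n\D h(n)$, up to commutators with the background $\D\bar n,\dive\bar u$. The electric-force term $\int\D\Phi^\eps\cdot U^\eps\,dx$ is rewritten using the continuity error and $\Delta\Phi^\eps=-N^\eps$ to produce $\tfrac{1}{2}\tfrac{d}{dt}\|\D\Phi^\eps\|^2$ plus controllable remainders. Denoting the relative-enthalpy density by $\eta(n^\eps|\bar n)$, this yields
\begin{equation*}
\tfrac{d}{dt}\mathcal{E}_0^\eps + \|U^\eps\|^2 \leq C\bigl(\|N^\eps\|^2 + \eps^{2q_1}\bigr) + (\text{absorbable lower-order terms}),
\end{equation*}
where $\mathcal{E}_0^\eps\sim \int\eta(n^\eps|\bar n) + \tfrac{\eps^2}{2}|U^\eps|^2 + \tfrac{1}{2}|\D\Phi^\eps|^2\,dx$.

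The second step, which is the chief difficulty in the non-constant background, is to recover dissipation in $\|N^\eps\|$ through the stream-function technique. Testing the momentum error against $\D\Phi^\eps$, integrating by parts, and using the decomposition $\D H^\eps = h'(n^\eps)\D N^\eps + (h'(n^\eps)-h'(\bar n))\D\bar n$ together with $\dive\D\Phi^\eps=-N^\eps$, one obtains after integration by parts in time for the $\eps^2\pt U^\eps$ term
\begin{equation*}
\int h'(n^\eps)|N^\eps|^2\,dx + \|\D\Phi^\eps\|^2 + \eps^2\tfrac{d}{dt}\langle U^\eps,\D\Phi^\eps\rangle = \eps^2\langle U^\eps,\pt\D\Phi^\eps\rangle + \mathcal{R}^\eps.
\end{equation*}
The quantity $\pt\D\Phi^\eps$ is replaced by $n^\eps U^\eps + N^\eps\bar u + K$ with $K$ divergence-free, the existence of which follows from $\dive(n^\eps U^\eps+N^\eps\bar u)=\Delta\pt\Phi^\eps$ and realises the Euler-Poisson analogue of the stream-function construction \eqref{streamgeneral}. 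The main obstacle lies in the remainder $\mathcal{R}^\eps$: terms of the form $\int(h'(n^\eps)-h'(\bar n))\D\bar n\cdot\D\Phi^\eps\,dx$ carry the background gradient $\D\bar n$, which has no time integrability; these are handled by writing $h'(n^\eps)-h'(\bar n)=O(|N^\eps|)$ and absorbing via the uniform smallness $\delta$ furnished by Theorem~\ref{theorem5.1}. Combined with Step~1, this closes the $L^2$ bound on $(N^\eps,\eps U^\eps,\D\Phi^\eps)$ together with the dissipation $\int_0^t(\|N^\eps\|^2+\|U^\eps\|^2)\,d\tau$.

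Finally, I propagate the $L^2$ estimate to $H^{s-1}$ by induction on $|\alpha|\leq s-1$. Applying $\pa_x^\alpha$ to the error system and repeating Steps~1--2 with $\pa_x^\alpha\D\Phi^\eps$ serving as the stream function of $\pa_x^\alpha N^\eps$, all commutators are controlled via the Moser-type inequalities \eqref{Moser00}--\eqref{Moser11} and the $W^{q-2,\infty}$-regularity of $n_e$ from Proposition~\ref{extcequil}, together with the uniform background bounds on $\bar n,\bar u,\D\bar\phi$ from Theorem~\ref{theorem5.1}. A Gronwall argument, combined with the initial hypothesis $\|n_0^\eps-n_e\|_{s-1}+\eps\|u_0^\eps\|_{s-1}\leq C\eps^q$, produces \eqref{EPerror} with $q_1=\min\{q,1\}$; the exponent is capped at $1$ because the residual $\eps^2\pt\bar u$ in the momentum error contributes at order $\eps^2$ regardless of how large $q$ is.
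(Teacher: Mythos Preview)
Your proposal is correct and follows essentially the same strategy as the paper: the anti-symmetric energy estimate for $(\N,\eps\U)$, the observation that $\F=\nabla\Phi^\eps$ is itself the stream function for the continuity error (so that $\langle\partial_x^\alpha(nu-\bar n\bar u),\F_\alpha\rangle=\tfrac12\tfrac{d}{dt}\|\F_\alpha\|^2$ because $\F$ is curl-free and the $\nabla\times M$ correction drops out), the dissipative estimate obtained by testing the momentum error against $\F$, and the induction on $|\alpha|$. The paper packages this as a direct reduction to the Euler--Maxwell argument of Section~4 with only Lemma~\ref{lemma4.4} modified, whereas you give a self-contained outline and handle the $\eps^2\partial_t U^\eps$ term by integration by parts in time rather than by bounding $\eps^4\|\partial_t u\|^2$ via the uniform estimate, but the substance is the same.
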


\section{Global convergence analysis for Euler-Maxwell system}

This section is devoted to the uniform global-in-time estimates regarding $\eps$ of solutions near non-constant equilibrium states to Euler-Maxwell system, based on which the global-in-time convergence analysis when $\eps\to 0$ is then carried out. For convenience, in this section, we drop the superscript of $\eps$. Let $T>0$, $(n,u,E,B)$ be the smooth solution to \eqref{EM2} defined on $[0,T]$ and $(n_e,E_e, B_e)$ be the steady solution to  \eqref{PHT}. We denote afterwards
\begin{eqnarray*}
    N&=&n-n_e, \quad F=E-E_e, \quad G=B-B_e,\nonumber\\
    U&=&\left(\begin{array}{c}N \\u \end{array}\right),
\quad W=\left(\begin{array}{c}N \\ \eps{u} \\ F \\ G\end{array}\right), \quad W_{0}^\eps=\left(\begin{array}{c}n_{0}^\eps-n_e \\ \eps u_0^\eps \\ E_{0}^\eps-E_e \\ B_{0}^\eps-B_e\end{array}\right).
\end{eqnarray*}
In addition, we introduce the functionals
$$W_T=\sup_{t\in [0,T]}\norm{W(t,\cdot)}_s^2.$$
which we assume to be sufficiently small. Thus, Proposition \ref{extcequil} naturally leads to the fact that there exist positive constants $n_1$, $n_2$ and $h_1$, such that
\begin{equation}\label{3.1}
	n_1<n<n_2, \quad h^\prime(n)\ge h_1, \,\,\text{for } \,\,\forall\,n>0.
\end{equation}

For readers' convenience, we here state our strategies in establishing uniform estimates of solutions regarding $\eps$. The proof can be divided into two steps:
\begin{itemize}
	\item Taking advantage of the anti-symmetric structure of the Euler-Maxwell system, one obtains that the solutions with only time derivatives are bounded by cubic terms of energy functionals (See Lemma \ref{lemma3.1}).
	\item The regular symmetrizable hyperbolicity for Euler-Maxwell system leads to the fact that solutions with mixed space and time derivatives are bounded by cubic terms of energy functionals or quadratic terms of those with higher order time derivatives but lower order space ones (See Lemmas \ref{lemma3.2}-\ref{Lemma3.3}). These enable us to perform an induction argument to convert space derivatives of solutions order-by-order to time derivatives.
\end{itemize}

The next lemma concerns estimates for solutions with only time derivatives.

\begin{lemma}\label{lemma3.1} Let $0\leq l\leq s$ be integers. Then it holds
	\begin{equation}\label{estimptu}
	\|\pt^l W(T)\|^2+2n_1\int_0^T \|\pt^{l} u(t)\|^{2}\mathrm{d}t\le \|\pt^l W(0)\|^2+\cubict.
	\end{equation}
 Besides, for integers $0\leq k\leq s-1$, one obtains
	\begin{eqnarray}\label{ptestim}
		\int_0^T \|(\pt^{k} N,\pt^{k} F)(t)\|^{2}\mathrm{d}t\leq C\| W(0)\|_s^2+\cubict.
	\end{eqnarray}
\end{lemma}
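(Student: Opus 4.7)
My strategy is to apply $\pt^l$ to each of the four equations of \eqref{EM2}, then test against carefully chosen weighted multipliers so that all linear cross-terms vanish by the anti-symmetric structure of the Euler--Maxwell system, leaving only damping on the left and cubic remainders on the right. The key structural observation is that $n_e$, $E_e$, and $B_e$ are $t$-independent, so $\pt^l$ annihilates them and commutes cleanly with the equilibrium part of the system; in particular no isolated $\D n_e$ factor can appear from the differentiation itself.

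For the energy identity I would test the $\pt^l$-continuity equation with $h'(n)\pt^l N$, the $\pt^l$-momentum equation with $\pt^l u$, the $\pt^l$-Amp\`ere equation with $\pt^l F/\eps$, and the $\pt^l$-Faraday equation with $\pt^l G/\eps$, then sum and integrate by parts. Three cancellations are essential. First, the pressure flux $\langle\pt^l\D P(n),\pt^l u\rangle$ pairs with $\langle h'(n)\pt^l N,\dive\pt^l(nu)\rangle$ from continuity to form a total $t$-derivative modulo cubic terms, which is the classical symmetric-hyperbolic symmetrization. Second, the Lorentz coupling $-\langle n\pt^l u,\pt^l F\rangle$ from momentum is cancelled by the Amp\`ere source $\langle\eps n\pt^l u,\pt^l F/\eps\rangle$. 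Third, the two curl terms disappear via the Poynting identity $\int F\cdot(\D\times G)\,dx=\int G\cdot(\D\times F)\,dx$; the magnetic contribution $\eps u\times B_e$ drops out because $u\cdot(u\times B_e)=0$. The damping $-nu$ then produces $\int n|\pt^l u|^2\,dx\ge n_1\|\pt^l u\|^2$ on the left, accounting up to normalization for $2n_1\int_0^T\|\pt^l u\|^2\,dt$. All commutators $[\pt^l,n]u$, $[\pt^l,h(n)]$, $\pt^l(u\cdot\D u)$, $\pt^l(u\times G)$, etc., carry at least one perturbation factor and are controlled via the Moser estimates \eqref{Moser00}--\eqref{Mosertt} by $C\|W\|_s\|U\|_s^2$; integrating in time yields \eqref{estimptu}.

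For \eqref{ptestim} I would divide the momentum equation by $n$ and subtract the equilibrium identity $\D h(n_e)+E_e=0$ to rewrite it as
\begin{equation*}
F+\D\bigl(h(n)-h(n_e)\bigr)=-u-\eps u\times B-\eps^2\pt u-\eps^2 u\cdot\D u.
\end{equation*}
Applying $\pt^k$, testing with $\pt^k F$, and using $\dive F=-N$ produces
\begin{equation*}
\|\pt^k F\|^2+\langle\pt^k N,\pt^k(h(n)-h(n_e))\rangle=\langle\pt^k F,\mathrm{RHS}\rangle.
\end{equation*}
Writing $\pt^k(h(n)-h(n_e))=h'(n)\pt^k N$ up to cubic commutators for $k\ge 1$, or using the mean-value expansion $h(n)-h(n_e)=\bigl(\int_0^1 h'(n_e+\tau N)\,d\tau\bigr)N$ for $k=0$, gives a lower bound $h_1\|\pt^k N\|^2$ by \eqref{3.1}. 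The right-hand side is controlled after Cauchy--Schwarz and Young by $C\bigl(\|\pt^k u\|^2+\eps^2\|\pt^{k+1}u\|^2\bigr)$ plus cubic terms. Integration in $t$ and the application of \eqref{estimptu} with $l=k$ and $l=k+1$ (which forces $k+1\le s$, that is $k\le s-1$) absorbs the $u$-norms into $C\|W(0)\|_s^2$ and yields \eqref{ptestim}.

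The main bookkeeping obstacle is to verify that every commutator produced by sliding $\pt^l$ past the non-constant weights $h'(n)$ and $n$ carries at least one perturbation factor, so that no bare equilibrium derivative $\D n_e$ survives to spoil the cubic structure; this is automatic here because $\pt^l n_e=0$, and is precisely the simplification afforded by differentiating only in $t$. Spatial derivatives will not share this feature and are handled in the subsequent lemmas.
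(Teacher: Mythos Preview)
Your overall strategy and your proof of \eqref{ptestim} match the paper's essentially line for line. The issue is in your account of why the time-derivative energy identity \eqref{estimptu} closes with only cubic remainders.

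You assert that the flux pairing $\langle\pt^l\D P(n),\pt^l u\rangle+\langle h'(n)\pt^l N,\dive\pt^l(nu)\rangle$ is a total $t$-derivative modulo cubic terms, and separately that the Lorentz--Amp\`ere pairing cancels modulo cubic terms. Neither claim is correct in isolation. After the standard symmetrization the flux pairing leaves the \emph{quadratic} remnant $\langle h'(n)\,\D n_e\cdot\pt^l u,\pt^l N\rangle$, coming from $\dive(n\pt^l u)=n\dive\pt^l u+(\D N+\D n_e)\cdot\pt^l u$; this is not a commutator and is not killed by $\pt^l n_e=0$. Likewise, the commutator $[\pt^l,n]E$ in your Lorentz term is \emph{not} cubic: its top piece is $\pt^l N\cdot E=\pt^l N\cdot(E_e+F)$, and the $E_e$ part contributes the quadratic term $-\langle h'(n_e)\,\D n_e\cdot\pt^l u,\pt^l N\rangle$ (via $E_e=-\D h(n_e)$). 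Only when these two quadratic leftovers are combined do they collapse to $\langle (h'(n)-h'(n_e))\,\D n_e\cdot\pt^l u,\pt^l N\rangle=O(N)\cdot\pt^l u\cdot\pt^l N$, which is genuinely cubic. This is precisely the content of the anti-symmetric identity the paper encodes as $B_{12}(n_e)^\top+B_{21}(n_e)=0$ for the matrix $B=\sum_j\pa_{x_j}\widetilde A_j-2A_0\hat L$; the cancellation uses both the equilibrium relation $E_e=-\D h(n_e)$ and the algebraic identity $P''=h'+nh''$, not merely the fact that $\pt^l$ annihilates $n_e$.

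So your last paragraph misdiagnoses the mechanism: the dangerous $\D n_e$-terms are not commutators at all and do survive the differentiation; they disappear only through the cross-cancellation above. If you rewrite your three cancellations so that the flux remainder and the $E_e$-piece of the Lorentz commutator are tracked together (or, equivalently, pass to the subtracted system and the symmetrizer $A_0(n)=\mathrm{diag}(h'(n),n\mathbb{I}_3)$ as the paper does, making the anti-symmetric matrix $B$ explicit), your argument goes through.
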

\begin{proof} The proof is based on the anti-symmetric structure of Euler-Maxwell system, without which many quadratic terms of energy functionals will inevitably appear. Subtracting \eqref{PHT} correspondingly from \eqref{EM2} leads to
\begin{equation}\label{EM30}
	\begin{cases}
		\pt N+u \cdot \D N+n \dive u+u\cdot \D n_e=0,\\
		\eps^2\pt u+\eps^2 ((u \cdot \D) u)+\D(h(n)-h(n_e))=-F-u-\eps u \times B, \\
		\eps\pt F-\D \times G =\eps nu, \\
		\eps \pt G+\D \times F=0, \\
		\dive F=-N , \quad \dive {G}=0.
	\end{cases}
\end{equation}
The difference of the enthalpy functions in \eqref{EM30}$_2$ can be rewritten into
	\begin{equation*}
		\D(h(n)-h(n_e))=h^{\prime}(n) \D N+\D h^{\prime}(n_e) N+r\left(n_e, N\right),
	\end{equation*}
	with the remaining term defined as
	\[
	r(n_e, N)=\left(h^{\prime}(n)-h^{\prime}(n_e)-h^{\prime \prime}(n_e) N\right) \D n_e.
	\]
By Taylor's expansion for $h'(n)$ at $n=n_e$, one obtains that $r(n_e, N)$ is actually an $O(N^2)$ term. For $U=(N,u^\top)^\top$, equations \eqref{EM30}$_1$--\eqref{EM30}$_2$ can be written into
\begin{equation}\label{sym1}
		D_0(\eps)\pt U+\sum_{j=1}^3A_j(n,u)\pa_{x_j} U+\hat{L}(n_e)U=\hat{f},
	\end{equation}
in which for $j=1,2,3$,
\begin{equation}\label{def}
	\begin{split}
D_0(\eps)=&\text{diag}(1,\eps^2\mathbb{I}_{3}), \quad\,\,\,\, A_{j}(n, u)=\left(\begin{array}{cc}u_{j} & n e_{j}^\top \\
	h'(n) e_{j} & \eps^2 u_{j} \mathbb{I}_{3}
\end{array}\right),\\
\hat{L}(n_e)=&\left(
	\begin{matrix}
		0& (\D n_e)^\top\\
		\D h^{\prime}(n_e)&0\\
	\end{matrix}
	\right),\quad
	\hat{f}=-\left(
	\begin{matrix}
		0\\
		F+u+\eps u\times B+r(n_e,N)
	\end{matrix}
	\right).
	\end{split}
\end{equation}
Here $\{e_j\}_{j=1}^3$ denotes the canonical basis of $\R^3$ and $\mathbb{I}_{3}$ denotes the $3\times 3$ unit matrix.

Now we introduce the symmetrizer $A_0(n)$ as well as $\widetilde{A}_{j}$ as follows:
\begin{equation}\label{symmetrizer}
A_{0}(n):=\left(\begin{array}{cc}h^{\prime}(n) & 0 \\ 0 & n \mathbb{I}_{3}\end{array}\right),\,\, \widetilde{A}_{j}(n, u):=A_{0}(n) A_{j}(n, u)=
\left(\begin{matrix}
	h^\prime(n)u_j& P^\prime(n)e_j^\top\\
	P^\prime(n)e_j & \eps^2 n u_j \mathbb{I}_{3}
\end{matrix}
\right).
\end{equation}
It is clear from \eqref{3.1} that $A_{0}(n)$ is symmetric and positive definite while $\widetilde{A}_{j}(n, u)$ is symmetric. This implies the symmetrizable hyperbolicity of the system \eqref{EM30}.

For integers $0\leq l\leq s$, applying $\pt^l $ to \eqref{sym1}, one obtains,
	\begin{equation}\label{sympt}
		D_0(\eps)\pt^{l+1}U+\sum_{j=1}^{3} A_{j}(n, u) \pt^l\pa_{x_{j}} U+ \hat{L}(n_e) \pt^l U=\pt^l \hat{f}+g_t^{l},
	\end{equation}
	with the commutators defined as
	\[
	g_t^l:=-\pt^l\left(\sum_{j=1}^{3} A_{j}(n, u) \pa_{x_{j}} U\right)+\sum_{j=1}^{3} A_{j}(n, u) \pt^l\pa_{x_{j}} U.
	\]
	Taking the inner product of \eqref{sympt} with $2A_0(n)\pt^l U$ in $L^2$ yields
	\begin{eqnarray}
		\frac{\mathrm{d}}{\mathrm{d} t}\left< D_0(\eps)A_{0}(n) \pt^l U, \pt^l U\right>&=&\left<D_0(\eps)\pt A_0(n)\pt^l U, \pt^l U \right>+\left<B(U,\D U)\pt^l U, \pt^l U\right>\nonumber\\
		&&+2\left< A_{0}(n) g_t^l, \pt^l U\right>+2\left< A_{0}(n) \pt^l \hat{f}, \pt^l U\right>:=\sum_{j=1}^4 I_t^j,\nonumber
	\end{eqnarray}
	with the natural correspondence of $\{I_t^j\}_{j=1}^4$. Here, $B(U,\D U)$ is defined as
	\begin{equation}\label{defBno}
		\begin{split}
			B(U,\D U):=\sum_{j=1}^{3} \pa_{x_{j}} \widetilde{A}_{j}(n, u)-2 A_{0}(n) \hat{L}(n_e):=\left(\begin{array}{cc} B_{11} & B_{12}  \\ B_{21} & B_{22} \end{array}\right),
		\end{split}
		\end{equation}
		where $B_{11}=\dive\left(h^{\prime}(n) u\right)$, $B_{22}=\dive(\eps^2  nu) \mathbb{I}_{3}$ and
		\[
		B_{12}=\left(\D P^{\prime}(n)-2 h^{\prime}(n) \D n_e\right)^\top, \quad B_{21}=\D P^{\prime}(n)-2 n \D h^{\prime}(n_e).
		\]
	The matrix $B(U,\D U)$ is anti-symmetric at the equilibrium state $(U_e,\D U_e)$. Indeed, at the equilibrium state $n=n_e$,
	\begin{eqnarray}\label{antisymm}
		B_{21}(n_e,\D n_e)&=&\D P^{\prime}(n_e)-2 n_e \D h^{\prime}(n_e)\nonumber\\
		&=&(P^{\prime\prime}(n_e)-2n_eh^{\prime\prime}(n_e))\D n_e\nonumber\\
		&=&(P^{\prime\prime}(n_e)-2(P^{\prime\prime}(n_e)-h^\prime(n_e))\D n_e\nonumber\\
		&=&-(P^{\prime\prime}(n_e)-2h^\prime(n_e))\D n_e=-B_{12}(n_e,\D n_e)^\top.
	\end{eqnarray}
Consequently, one obtains $B_{12}(U,\D U)^\top+B_{21}(U,\D U)$ is an $O(N)$ term by using Taylor's expansions at the equilibrium. With these on hand, we are ready to estimate $\{I_t^j\}_{j=1}^4$ term by term as follows.

    \vspace{2mm}

	\par \underline{Estimate of $I_t^1$}: It follows from the definition of $I_t^1$ that
	\begin{eqnarray}\label{It1}
		|I_t^1|\leq  C\|\pt A_{0}(n)\|_{\infty}\|\pt^l U\|^2\leq C\|\pt n\|_{s-1} \|\pt^l U\|^2\leq  \cubic.
	\end{eqnarray}
	
	\par \underline{Estimate of $I_t^2$}:
	It is clear that by the notation introduced in \eqref{defBno},
	\begin{eqnarray}
		I_t^2&=&\left<B_{11}\pt^l N,\pt^l N\right>+\left<B_{22}\pt^l u,\pt^l u\right>+\left<(B_{12}(n,\D n)^\top+B_{21}(n,\D n))\pt^l N,\pt^l u\right>,\nonumber
	\end{eqnarray}
	where the first two terms are bounded by $\cubic$. Due to \eqref{antisymm}, it holds
	\begin{equation*}
		\left|\left<(B_{12}(n,\D n)+B_{21}(n,\D n)^\top)\pt^l N, \pt^l u\right>\right|\leq C\norm{N}_s\|\pt^l u\|\|\pt^l N\|\leq \cubic,
	\end{equation*}
	and consequently, one has
	\begin{equation}\label{It2}
		|I_t^2|\leq \cubic.
	\end{equation}

	\par \underline{Estimate of $I_t^3$}: Notice $g_t^0=0$. For $l\geq 1$, by \eqref{Mosertt}, one has
	\begin{eqnarray}\label{It3}
		\left|I_t^3\right|&\leq&2\left|\left< u \cdot \D \pt^l N- \pt^l (u \cdot \D N), h^{\prime}(n) \pt^l N\right>\right| \nonumber\\
		&&+2{\eps}^2\left|\left< (u \cdot \D )\pt^l u- \pt^l((u \cdot \D) u), n \pt^l u\right>\right|\nonumber \\
		&&+2\left|\left< n\dive \pt^l u- \pt^l (n \dive u), h^{\prime}(n) \pt^l N\right>\right| \nonumber\\
		&&+2\left|\left< h^{\prime}(n) \D \pt^l N-
		\pt^l \left(h^{\prime}(n) \D N\right), n \pt^l u\right>\right|\leq \cubic.
	\end{eqnarray}
	
	\par \underline{Estimate of $I_t^4$}: By the definition of $\hat{f}$ in \eqref{sym1}, one has
	\begin{eqnarray}
		I_t^4=-\left<2n\pt^l F,\pt^lu\right>-\left<2n\pt^l u, \pt^l u\right>-\left<2\eps n\pt^l(u\times B), \pt^l u\right>-\left<2n \pt^l r(n_e,N), \pt^l u\right>.\nonumber
	\end{eqnarray}
	It is clear that by noting \eqref{3.1},
	\[
	\left<2n\pt^l u, \pt^l u\right>\ge 2n_1\|\pt^l u\|^2.
	\]
	Also, since $\left<n \pt^l (u\times B_e), \pt^l u\right>=0$, one obtains
	\begin{equation*}
		\left|\left<2\eps n\pt^l (u\times B), \pt^l u\right>\right|= 2\eps\left|\left<n(\pt^l(u\times B)-\pt^l(u\times B_e)), \pt^l u\right>\right|
		\leq C\eps\norm{u}_s^2\norm{G}_s.
	\end{equation*}
	A direct calculation shows that
	\[
	|\left<2n \pt^l r(n_e,N), \pt^l u\right>|\leq C\norm{N}_s^2\|\pt^l u\|\leq \cubic.
	\]
	Combining all these estimates, we arrive at
	\begin{equation}\label{It4naka}
		I_t^4\leq -\left<2n\pt^l F, \pt^l u\right>-2n_1\|\pt^l u\|^2+\cubic.
	\end{equation}
	
	In order to control $\left<2n\pt^l F, \pt^l u\right>$, we apply $\pt^l $ to \eqref{EM30}$_3$--\eqref{EM30}$_5$, leading to
	\begin{equation*}
		\left\{
		\begin{array}{l}
			\pt^{l+1} F-\eps^{-1}\D \times \pt G=\pt^l (nu),  \\
			\pt^{l+1} G+\eps^{-1}\D \times \pt F=0, \\
	        \dive  \pt^l F=-\pt^l N, \quad \dive \pt^l G=0.
	\end{array}
		\right.
	\end{equation*}
Taking the inner product of the first two equations with $(\pt^l F, \pt^l G)$ yields
	\begin{equation}\label{maxwell-t}
		\frac{\mathrm{d}}{\mathrm{d}t}\left(\|\pt^l F\|^2+\|\pt^l G\|^2\right)-2\left< \pt^l (nu),\pt^l F\right>=0.
	\end{equation}
	Combining the above with \eqref{It4naka}, one obtains
	\begin{eqnarray}\label{It4}
		&&I_t^4+\frac{\mathrm{d}}{\mathrm{d}t}\left(\|\pt^l F\|^2+\|\pt^l G\|^2\right)+2n_1\|\pt^l u\|^2 \nonumber\\
		&\leq&\cubic +C\left|\left< \pt^l F,\pt^l(nu)-n\pt^l u\right>\right|\leq \cubic.
	\end{eqnarray}
	Finally, combining \eqref{It1},   \eqref{It2}, \eqref{It3} and \eqref{It4} yields
	\begin{equation}\label{base1}
	\frac{\mathrm{d}}{\mathrm{d} t}\left(\left< D_0(\eps)A_{0}(n) \pt^l U, \pt^l U\right>+\|(\pt^l F,\pt^l G)\|^2\right)+2n_1\|\pt^l u\|^2\leq \cubic.
	\end{equation}
    Integrating the above over $[0,T]$ and noticing that  $\left< D_0(\eps)A_{0}(n) \pt^l U, \pt^l U\right>$ is equivalent to $\|\pt^l W\|^2$, one obtains \eqref{estimptu}.

	Next, applying $\pt ^k$ to \eqref{EM30}$_2$ with $0\leq k\leq s-1$, one has
	\begin{equation*}
		\pt^k F+\pt^k\D(h(n)-h(n_e))=-\eps^2\pt^{k+1} u-\pt^k u-\pt^k (\eps u\times B+\eps^2(u\cdot\D) u).
	\end{equation*}
	Taking the inner product of the above with $\pt^k F$ yields
	\begin{eqnarray}\label{bEqula0}
		&&	\|\pt^k F\|^2+\left<\pt^k\D(h(n)-h(n_e)), \pt^k F\right>\nonumber\\
		&=&-\left<\eps^2\pt^{k+1} u+\pt^k u+\pt^k (\eps u\times B+\eps^2(u\cdot\D) u),  \pt^k F\right>\nonumber\\
		&\leq& \dfrac{1}{2}\|\pt^k F\|^2+C\eps^4\|\pt^{k+1} u\|^2+C\|\pt^k u\|^2+\cubic.
	\end{eqnarray}
	It remains to estimate the last term on the left hand side of \eqref{bEqula0}. Notice that
	\[
	h(n)-h(n_e)=N\int_0^1 h'(n_e+\theta N)\mathrm{d}\theta:=N\int_0^1 h'(\tilde{n}^\theta)\mathrm{d}\theta,
	\]
	with the natural correspondence of $\tilde{n}^\theta$. Consequently,
	\begin{eqnarray}\label{3.17}
		&&\left<\pt^k\D(h(n)-h(n_e)), \pt^k F\right>\nonumber\\
		&=&-\left<\pt^k(h(n)-h(n_e)), \pt^k \dive F\right>\nonumber\\
		&=&\left<\int_0^1 h'(\tilde{n}^\theta)\mathrm{d}\theta\pt^k N, \pt^k N\right>+\left<\pt^k\left(N\int_0^1 h'(\tilde{n}^\theta)\mathrm{d}\theta\right)-\int_0^1 h'(\tilde{n}^\theta)\mathrm{d}\theta\pt^k N, \pt^k N\right>\nonumber\\
		&\geq& h_1\|\pt^k N\|^2-\cubic,
	\end{eqnarray}
	where we have used \eqref{Mosertt} and the fact $\pt\tilde{n}^\theta=\theta\pt N$. Combining \eqref{bEqula0}, it holds
	\begin{eqnarray}\label{base2}
		\dfrac{1}{2}\|\pt^k F\|^2+h_1\|\pt^k N\|^2\leq C\eps^4\|\pt^{k+1} u\|^2+C\|\pt^k u\|^2+\cubic.
	\end{eqnarray}
	Integrating the above over $[0,T]$ and combining \eqref{estimptu} yield \eqref{ptestim}.
\end{proof}

For simplicity, for any multi-index $\al\in\mathbb{N}^3$, we denote
\[
U_\al=\pa_x^\al U, \quad W_\al=\pa_x^\al W, \quad  (N_\al, u_\al, F_\al, G_\al)=(\pa_x^\al N,\pa_x^\al u, \pa_x^\al F,\pa_x^\al G ).
\]
In the following, we denote by $\mu>0$ a sufficiently small constant, of which the value is determined in \eqref{mu}.

The next lemma concerns estimates for solutions with space derivatives.

\begin{lemma}\label{lemma3.2} Let $0\leq k\leq s-1$ be integers and multi-indices $\al\in\mathbb{N}^3$ satisfying $1\leq |\al|\leq s$ and $|\al|+k\leq s$, then it holds
	\begin{eqnarray}\label{ptinduction}
		&&\frac{\mathrm{d}}{\mathrm{d}t}\left(\left< D_0(\eps)A_0(n)\pt^k U_\al,\pt^k U_\al\right>+\|\pt^k F_\al\|^2+\|\pt^k G_\al\|^2\right)+2n_1\|\pt^k u_\al\|^2\nonumber\\
		&\leq&  C\mu\norm{u}_{s}^2+C\|\pt^k N\|_{|\al|}^2+C\|\pt^k F\|_{|\al|-1}^2+C\norm{U}_s^2\norm{W}_s.
	\end{eqnarray}
\end{lemma}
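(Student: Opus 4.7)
The plan is to mimic the proof of Lemma~\ref{lemma3.1}, now applying the mixed derivative $\pt^k\pa_x^\al$ (rather than $\pt^l$) to the symmetrized hyperbolic block \eqref{sym1} and applying $\pt^k\pa_x^\al$ to the Maxwell block \eqref{EM30}$_3$--\eqref{EM30}$_4$. Taking the $L^2$ inner product with $2A_0(n)\pt^k U_\al$ and with $(2\pt^k F_\al,2\pt^k G_\al)$ respectively yields an energy identity for $\langle D_0(\eps)A_0(n)\pt^k U_\al,\pt^k U_\al\rangle+\|\pt^k F_\al\|^2+\|\pt^k G_\al\|^2$ whose right-hand side decomposes into four pieces $\{I^j_\al\}_{j=1}^4$ (exact analogues of $\{I^j_t\}_{j=1}^4$) together with a Maxwell coupling contribution $J_\al$. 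The desired dissipation $2n_1\|\pt^k u_\al\|^2$ again originates from the relaxation term $-u$ in \eqref{EM30}$_2$.

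The pieces $I^1_\al$, the diagonal blocks $B_{11},B_{22}$ of $I^2_\al$, the coupling $J_\al$ (up to the commutator $\pt^k\pa_x^\al(nu)-n\pt^k u_\al$), and the non-coupling part of $I^4_\al$ are all cubic and bounded by $C\norm{W}_s\norm{U}_s^2$ exactly as in Lemma~\ref{lemma3.1}. In particular, the off-diagonal block $B_{12}+B_{21}^\top$ of $I^2_\al$ is still $O(N)$ thanks to the equilibrium identity \eqref{antisymm} combined with Taylor's expansion, and the Lorentz contribution splits as $\eps(u\times B_e)+\eps(u\times G)$ where the first piece is skew and the second carries a small factor $\eps\norm{G}_s$. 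The Maxwell coupling $-2\langle n\pt^k F_\al,\pt^k u_\al\rangle$ from $I^4_\al$ cancels against $2\langle\pt^k\pa_x^\al(nu),\pt^k F_\al\rangle$ obtained by pairing $\pt^k\pa_x^\al$ of \eqref{EM30}$_3$ with $\pt^k F_\al$, up to a cubic commutator controlled by \eqref{Moser00}.

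The main obstacle is the commutator term $I^3_\al$, whose analogue in Lemma~\ref{lemma3.1} was purely cubic. Now the spatial derivatives may fall on the zeroth-order matrix $\hat L(n_e)$ defined in \eqref{def} and on coefficient functions such as $h'(n)$, producing terms involving $\D n_e$, $\D h'(n_e)$ or the top order $\D n$; since $n_e$ is not constant these factors are not small, so a purely cubic bound is unavailable. I would decompose $I^3_\al$ via \eqref{Moser00}--\eqref{Mosertt} into two parts: a genuinely cubic remainder $C\norm{W}_s\norm{U}_s^2$ collecting every term in which an extra derivative lands on a small quantity such as $N$ or $u$; and a ``linear'' remainder controlled by $C\|\pt^k N\|_{|\al|}\|\pt^k u_\al\|+C\|\pt^k F\|_{|\al|-1}\|\pt^k u_\al\|$, where the $\D n_e$- and $\D h'(n_e)$-type factors are absorbed into constants using the regularity of $n_e$ from Proposition~\ref{extcequil}. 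Applying Young's inequality with the small parameter $\mu$ then converts the linear remainder into $C\mu\norm{u}_s^2+C\|\pt^k N\|_{|\al|}^2+C\|\pt^k F\|_{|\al|-1}^2$, which matches precisely the right-hand side of \eqref{ptinduction}. The inequality thus produced is in a form amenable to an induction on $|\al|$ that converts space derivatives into time derivatives, whose base case is supplied by Lemma~\ref{lemma3.1}; the dissipation $2n_1\|\pt^k u_\al\|^2$ on the left will eventually absorb the $C\mu\norm{u}_s^2$ term once all indices are summed and $\mu$ is chosen small relative to $n_1$.
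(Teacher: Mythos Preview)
Your overall architecture is right and closely parallels the paper's: apply $\pt^k\pa_x^\al$ to the symmetrized Euler block and to the Maxwell block, pair with $2A_0(n)\pt^k U_\al$ and $(2\pt^k F_\al,2\pt^k G_\al)$, and extract the dissipation $2n_1\|\pt^k u_\al\|^2$ from the relaxation term. Your treatment of $I^1_\al$, of the diagonal blocks of $I^2_\al$, of the Lorentz piece, and of the $\hat L$-commutator producing $\|\pt^k N\|_{|\al|}$ terms is all fine. (A minor stylistic difference: the paper, in Lemma~\ref{lemma3.2}, reformulates with the matrix $L(\D n_e;n,n_e)$ in \eqref{Sym} and does \emph{not} invoke the anti-symmetric identity \eqref{antisymm}; it simply accepts the cross term $2\langle\pa_{x_j}P'(n)\,\pt^k N_\al,\pt^k\pa_x^\al u_j\rangle$ as quadratic. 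Your route through the anti-symmetric $B$-matrix is an equally valid alternative for the Euler block.)

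There is, however, a genuine gap in your handling of the Maxwell coupling. You assert that after the cancellation of $-2\langle n\pt^k F_\al,\pt^k u_\al\rangle$ against $2\langle\pt^k\pa_x^\al(nu),\pt^k F_\al\rangle$, the leftover commutator $2\langle\pt^k\pa_x^\al(nu)-n\pt^k u_\al,\pt^k F_\al\rangle$ is \emph{cubic} and controlled by \eqref{Moser00}. That is false: \eqref{Moser00} gives
\[
\|\pt^k\pa_x^\al(nu)-n\pt^k u_\al\|\le C\|\D n\|_{s-1}\|\pt^k u\|_{|\al|-1}+C\norm{\pt n}_{s-1}\norm{u}_{k+|\al|-1},
\]
and $\|\D n\|_{s-1}$ contains $\|\D n_e\|_{s-1}$, which is bounded but \emph{not} small. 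So this commutator is genuinely quadratic, and pairing it directly with $\pt^k F_\al$ would leave a factor $\|\pt^k F\|_{|\al|}$ at the \emph{top} order, which you cannot absorb. The paper's remedy (see \eqref{treatF}) is to split $n=N+n_e$, treat the $N$-piece as cubic, and for the $n_e$-piece integrate by parts one spatial derivative off $F_\al$:
\[
\langle\pt^k F_\al,\pt^k\pa_x^\al(n_e u)-n_e\pt^k u_\al\rangle
=-\langle\pt^k F_{\al'},\pa_{x_1}\bigl(\pt^k\pa_x^\al(n_e u)-n_e\pt^k u_\al\bigr)\rangle,
\]
which is then bounded by $\mu\norm{u}_s^2+C\|\pt^k F\|_{|\al|-1}^2$. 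This is precisely the origin of the $\|\pt^k F\|_{|\al|-1}^2$ term in \eqref{ptinduction}. Your proposal instead attributes that term to $I^3_\al$, but the Euler commutator $I^3_\al$ involves only $A_j(n,u)$ and $\hat L(n_e)$ and contains no $F$ whatsoever; it can only produce $\|\pt^k N\|_{|\al|}$-type contributions. Once you relocate the integration-by-parts trick to the Maxwell commutator, the rest of your outline goes through.
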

\begin{proof} We start with \eqref{EM30}. Notice that
	\begin{equation}\label{h}
		\D(h(n)-h(n_e))=h^\prime(n)\D n-h^\prime(n_e)\D n_e= h^\prime(n)\D N+ N \D n_e\int_0^1 h^{\prime\prime}(\tilde{n}^\theta)\mathrm{d}\theta.
	\end{equation}
Then \eqref{EM30}$_1$--\eqref{EM30}$_2$ can be written into
	\begin{equation}\label{Sym}
		D_0(\eps)\pt U+\sum_{j=1}^{3} A_{j}(n, u) \pa_{x_{j}} U+L(\D n_e; n,n_e) U=f,
	\end{equation}
where $D_0(\eps)$ and $A_j(n,u)$ is defined in \eqref{def} and
\[
L(\D n_e;n,n_e)=\left(\begin{array}{cc}
	0 &(\D n_e)^\top \\
	\displaystyle\int_0^1 h^{\prime\prime}(\tilde{n}^\theta)\mathrm{d}\theta\D n_e & 0
\end{array}\right),\quad
f=-\left(\begin{array}{c}
	0 \\ F+u+\eps u \times B
\end{array}\right).
\]

	For integers $0\leq k\leq s-1$ and multi-indices $\al\in\mathbb{N}^3$ with $k+|\al|\leq s$ and $|\al|\geq 1$, applying mixed space and time derivatives $\pt^k\pa_x^\al$ to \eqref{Sym}, one obtains
	\begin{equation}\label{Symla}
		D_0(\eps)\pt^{k+1}U_\al+\sum_{j=1}^{3} A_{j}(n, u) \pt^k\pa_{x_{j}} U_\al+ \pa_x^\al (L(\D n_e) \pt^k U)=\pt^k\pa_x^\al f+g_t^{k,\al},
	\end{equation}
	with the commutators defined as
	$$
	g_t^{k,\al}=\sum_{j=1}^{3}\left(A_{j}(n, u) \pt^k \pa_{x_{j}} U_{\al}- \pt^k \pa_{x}^\al\left(A_{j}(n, u) \pa_{x_{j}} U\right)\right).
	$$
	Taking the inner product of \eqref{Symla} with $2A_0(n)\pt^k U_\al$ in $L^2$ yields
	\begin{align}\label{EnergyEqula}
		&\frac{\mathrm{d}}{\mathrm{d} t}\left< D_0(\eps)A_{0}(n) \pt^k U_\al, \pt^k U_\al\right>\nonumber\\
		=&\left<\dive A(n,u) \pt^k U_\al, \pt^k U_\al \right>-2\left<A_0(n)\pa_x^\al (L(\D n_e) \pt^k U), \pt^k U_\al\right>\nonumber\\
		&+2\left< A_{0}(n) g_t^{k,\al}, \pt^k U_{\al}\right>+2\left< A_{0}(n) \pt^k\pa_{x}^{\al} f, \pt^k U_{\al}\right>:=\sum_{j=1}^4I_{k,\al}^j,\nonumber
	\end{align}
with the natural correspondence of $\{I_{k,\al}^j\}_{j=1}^4$, and $\dive A(n,u)$ is defined as
	\begin{equation*}
		\dive A(n,u)=D_0(\eps)\pt A_0(n)+\sum_{j=1}^3 \pa_{x_j} \tilde{A_j}(n,u).
	\end{equation*}
	
	Similar to the treatment in \eqref{It1}, one has
	\[
	\left|\left< D_0(\eps)\pt A_{0}(n) \pt^k U_\al, \pt^k U_\al\right>\right|\leq \cubic.
	\]
	Besides, for the term containing $\pa_{x_j} \tilde{A_j}(n,u)$, it holds that for a certain $j$,
	\begin{eqnarray}
		\left<\pa_{x_j}\tilde{A_j}(n,u)\pt^k U_\al,\pt^k U_\al\right>&=&\left<\pa_{x_j}(h^\prime(n)u_j) \pt^k N_\al, \pt^k N_\al \right>
		+\eps^2\left<\pa_{x_j}(nu_j) \pt^k u_\al, \pt^k u_\al \right>\nonumber\\
		&&+2\left<\pa_{x_j}(P^\prime(n)) \pt^k N_\al, \pt^k \pa_x^\al u_j\right>.\nonumber
	\end{eqnarray}
	It is clear that the first two terms can be controlled by $\cubic$.
	For the remaining term,  since $\|\pa_{x_j}(P^\prime(n))\|_\infty$ is bounded  but not small, we can only obtain the following quadratic estimates
	\begin{equation*}
		2 \left|\left<\pa_{x_j}(P^\prime(n)) \pt^k N_\al, \pt^k \pa_x^\al(u_j) \right>\right|\leq C\|\pt^k u_\al\|\|\pt^k N_\al\|\leq \mu \norm{u}_s^2+ C\|\pt^k N\|_{|\al|}^2.
	\end{equation*}
	These estimates lead to
	\begin{equation}\label{Ix1}
		|I_{k,\al}^1|\leq \mu \norm{u}_s^2+ C\|\pt^k N\|_{|\al|}^2+\cubic.
	\end{equation}
Since $\D n_e$ is bounded in $H^s$ but not small compared to the case of constant equilibrium states, a direct calculation shows
	\begin{equation}\label{Ix2}
		|I_{k,\al}^2|\leq \|\D n_e\|_{s-1}\|\pt^k N\|_{|\al|}\|\pt^k u_\al\|\leq \mu \norm{u}_s^2+ C\|\pt^k  N\|_{|\al|}^2.
	\end{equation}
	
	As to $I_{k,\al}^3$, one obtains
	\begin{eqnarray}\label{Ix3}
		|I_{k,\al}^3|	&\leq& C\left|\left< u \cdot \D \pt^k N_ \al- \pa_{x}^{\al}\pt^k(u \cdot \D N), h^{\prime}(n) \pt^kN_ \al\right>\right| \nonumber\\
		&&+C{\eps}^2\left|\left< (u \cdot \D )\pt^k u_\al- \pt^k\pa_{x}^{\al}((u \cdot \D) u), n \pt^ku_\al\right>\right|\nonumber \\
		&&+C\left|\left< n\dive \pt^k u_\al- \pt^k\pa_{x}^{\al}(n \dive u), h^{\prime}(n)\pt^k N_\al\right>\right| \nonumber\\
		&&+C\left|\left< h^{\prime}(n) \D \pt^k N_\al-
		\pt^k\pa_{x}^{\al}\left(h^{\prime}(n) \D  N\right), n \pt^ku_\al\right>\right|\nonumber\\
		&\leq&	\mu \norm{u}_s^2+C\|\pt^k N\|_{|\al|}^2+\cubic,
	\end{eqnarray}
	in which we have used the inequality \eqref{Moser00}.
	
	For $I_{k,\al}^4$, similar to \eqref{It4naka}, one obtains
	\begin{equation}\label{Ix4}
	I_{k,\al}^4\leq -\left<2n\pt^k F_\al, \pt^k u_\al\right>-2n_1\|\pt^k u_\al\|^2+\cubic.
	\end{equation}
	
	Combining estimates \eqref{Ix1}--\eqref{Ix4}, we arrive at
	\begin{eqnarray}\label{naka}
		&&\frac{\mathrm{d}}{\mathrm{d}t}\left(\left< D_0(\eps)A_0(n)\pt^k U_\al,\pt^k U_\al\right>\right)+2n_1\|\pt^k u_\al\|^2+2\left< \pt^k F_\al,n\pt^k u_\al\right>\nonumber\\
		&\leq&  \mu\norm{u}_{s}^2+\cubic.
	\end{eqnarray}
	
	Similarly as \eqref{maxwell-t}, \eqref{EM30}$_3$--\eqref{EM30}$_4$ imply
	\begin{equation*}
		\frac{\mathrm{d}}{\mathrm{d}t}\left(\|\pt^k F_\al\|^2+\|\pt^k G_\al\|^2\right)-2\left< \pt^k \pa_x^\al(nu),\pt^k F_\al\right>=0,
	\end{equation*}
	which further combining \eqref{naka} yields
	\begin{eqnarray}\label{naka3}
		&&\frac{\mathrm{d}}{\mathrm{d}t}\left(\left< D_0(\eps)A_0(n)\pt^k U_\al,\pt^k U_\al\right>+\|\pt^k F_\al\|^2+\|\pt^k G_\al\|^2\right)+2n_1\|\pt^k u_\al\|^2\nonumber\\
		&\leq&  \mu\norm{u}_{s}^2+C\|\pt^k N\|_{|\al|}^2+\cubic+2\left<\pt^k F_\al, \pt^k\pa_x^\al(nu)-n\pt^k u_\al\right>. \quad
	\end{eqnarray}

	Now we aim to control the last term of the above inequality. Noticing that
	\[
	\pt^k\pa_x^\al(nu)-n\pt^k u_\al=	\pt^k\pa_x^\al(Nu)-N\pt^k u_\al+\pt^k\pa_x^\al(n_eu)-n_e\pt^k u_\al,
	\]
	and consequently, by the Moser-type calculus inequalities,
	\[
	\left|\left<\pt^k F_\al, 	\pt^k\pa_x^\al(Nu)-N\pt^k u_\al\right>\right|\leq \cubic.
	\]
	Again since $\D n_e$ is not small, quadratic estimates are inevitable. Noticing $|\al|\ge 1$, without loss of generality, we may assume that $\al_1\neq 0$. We denote a multi-index $\al^\prime\in \mathbb{N}^3$ with $|\al^\prime|=|\al|-1$ and $\pa_{x_1}\pa_x^{\al^\prime}=\pa_x^\al$, then integration by parts gives
	\begin{eqnarray}\label{treatF}
		\left|\left<\pt^k F_\al, \pt^k\pa_x^\al(n_eu)-n_e\pt^k u_\al\right>\right|&=&\left|\left<\pt^k F_{\al^\prime},\pa_{x_1}(\pt^k\pa_x^\al(n_eu)-n_e\pt^ku_\al)\right>\right|\nonumber\\
		&\leq& \mu \norm{u}_s^2+ C\|\pt^k F\|_{|\al|-1}^2.
	\end{eqnarray}
	Combining the above two estimates and \eqref{naka3} yields \eqref{ptinduction}.
\end{proof}

\begin{lemma}\label{Lemma3.3}(Dissipative estimates for $N$ and $F$) Let $0\leq k\leq s-1$ be integers and $\al$ be multi-indices satisfying $|\al|\geq 1$ and $k+|\al|\leq s$, then it  holds
	\begin{eqnarray}\label{NFla}
		&&\|\pt ^k N\|_{|\al|}^{2} +\|\pt ^k F\|_{| a|-1}^{2}\nonumber\\
		& \le& C\|\pt ^k u\|_{|\al|-1}^{2}+C{\eps}^4\|\pt^{k+1} u\|_{|\al|-1}^{2}+C\|\pt ^k N\|_{|\al|-1}^{2}+\cubic.\,\,\,
	\end{eqnarray}
\end{lemma}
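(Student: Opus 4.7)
The plan is to combine two complementary uses of the momentum equation \eqref{EM30}$_2$: an $L^2$ inner-product step that mimics the proof of \eqref{ptestim} in Lemma \ref{lemma3.1} but now carries spatial derivatives, together with an algebraic rearrangement that trades one spatial derivative of $F$ for one extra spatial derivative of $N$. The Poisson constraint $\dive F=-N$ is essential in both steps.

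First, for every multi-index $\gamma$ with $|\gamma|\le|\al|-1$ I would apply $\pt^k\pa_x^\gamma$ to \eqref{EM30}$_2$ and take the $L^2$ inner product with $\pt^k\pa_x^\gamma F$. One integration by parts together with $\dive F=-N$ transforms the principal term into
\begin{equation*}
\langle \pt^k\pa_x^\gamma \D(h(n)-h(n_e)),\,\pt^k\pa_x^\gamma F\rangle=\langle \pt^k\pa_x^\gamma(h(n)-h(n_e)),\,\pt^k\pa_x^\gamma N\rangle.
\end{equation*}
Expanding $h(n)-h(n_e)=N\int_0^1 h'(\tilde{n}^\theta)\mathrm{d}\theta$ exactly as in \eqref{3.17} and invoking \eqref{Moser11}, this expression dominates $h_1\|\pt^k\pa_x^\gamma N\|^2-\cubic$. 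The remaining terms in the inner product are handled by Cauchy--Schwarz (absorbing $\tfrac12\|\pt^k\pa_x^\gamma F\|^2$ into the left), and after summation over $|\gamma|\le|\al|-1$ I obtain the intermediate bound
\begin{equation*}
\|\pt^k F\|_{|\al|-1}^2+\|\pt^k N\|_{|\al|-1}^2\le C\eps^4\|\pt^{k+1}u\|_{|\al|-1}^2+C\|\pt^k u\|_{|\al|-1}^2+\cubic.
\end{equation*}

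Next, to gain one extra spatial order on $N$, I would read \eqref{EM30}$_2$ algebraically with the help of \eqref{h}:
\begin{equation*}
h'(n)\D N=-F-\eps^2\pt u-u-\eps u\times B-\eps^2(u\cdot\D)u-N\,\D n_e\int_0^1 h''(\tilde{n}^\theta)\mathrm{d}\theta.
\end{equation*}
Applying $\pt^k\pa_x^\gamma$ with $|\gamma|=|\al|-1$, taking the $L^2$ norm, and using $h'(n)\ge h_1>0$ together with \eqref{Moser11} on the products, I obtain a pointwise (in $\gamma$) estimate for $\|\pt^k\pa_x^\gamma\D N\|^2$ in terms of $\|\pt^k\pa_x^\gamma F\|^2$, $\|\pt^k\pa_x^\gamma u\|^2$, $\eps^4\|\pt^{k+1}\pa_x^\gamma u\|^2$, $\|\pt^k\pa_x^\gamma N\|^2$, and $\cubic$. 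Summing over $|\gamma|=|\al|-1$, using $\|\pt^k N\|_{|\al|}^2\le\|\pt^k N\|_{|\al|-1}^2+\|\pt^k\D N\|_{|\al|-1}^2$, and finally substituting the intermediate bound from the first step to dispose of $\|\pt^k F\|_{|\al|-1}^2$ on the right, will produce \eqref{NFla}.

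The main obstacle is the non-constant equilibrium: since $\D n_e$ is only bounded in $H^{q-2}$ and is not a small quantity, the term $N\,\D n_e\int_0^1 h''(\tilde{n}^\theta)\mathrm{d}\theta$ in the algebraic rearrangement is merely linear in $N$, and it is precisely this term which forces the lower-order remainder $C\|\pt^k N\|_{|\al|-1}^2$ on the right-hand side of \eqref{NFla}. Consequently the estimate is not self-closing and must be combined with the induction argument on $|\al|$ outlined in the two bullet points preceding Lemma \ref{lemma3.1}, with the base case $|\al|=0$ already handled by \eqref{ptestim}.
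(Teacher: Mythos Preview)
Your two-step scheme is valid and essentially dual to the paper's: the paper first pairs the differentiated momentum equation with $\D\pt^k N_\beta$ (so that $\langle h'(n)\D\pt^k N_\beta,\D\pt^k N_\beta\rangle\ge h_1\|\D\pt^k N_\beta\|^2$ and $\langle\pt^k F_\beta,\D\pt^k N_\beta\rangle=\|\pt^k N_\beta\|^2$ via $\dive F=-N$), obtaining $\|\pt^k N\|_{|\al|}^2$ directly as in \eqref{Nla}, and only afterwards reads off $\|\pt^k F\|_{|\al|-1}^2$ algebraically from \eqref{EM30}$_2$. You pair with $F$ first and then isolate $\D N$ algebraically. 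Both routes land at \eqref{NFla}, and both rely on the same constraint $\dive F=-N$ in the inner-product step.

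There is, however, one genuine slip in your first step. The claim that
\[
\langle\pt^k\pa_x^\gamma(h(n)-h(n_e)),\pt^k\pa_x^\gamma N\rangle\ \ge\ h_1\|\pt^k\pa_x^\gamma N\|^2-\cubic
\]
does \emph{not} follow ``exactly as in \eqref{3.17}'' once spatial derivatives are present. In \eqref{3.17} only $\pt^k$ acts, and the commutator is cubic because $\pt\tilde{n}^\theta=\theta\pt N$ is small. With $\pa_x^\gamma$ involved, the commutator $\pt^k\pa_x^\gamma\bigl(N\int_0^1 h'(\tilde{n}^\theta)\mathrm{d}\theta\bigr)-\int_0^1 h'(\tilde{n}^\theta)\mathrm{d}\theta\cdot\pt^k\pa_x^\gamma N$ picks up $\pa_x\tilde{n}^\theta=\D n_e+\theta\D N$, and $\D n_e$ is merely bounded, not small. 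By \eqref{Moser00} (not \eqref{Moser11}) this commutator is only controlled by $C\|\pt^k N\|_{|\gamma|-1}+\cubic$, so the inner product carries an extra quadratic remainder $-C\|\pt^k N\|_{|\gamma|-1}^2$. The fix is immediate: carry this remainder through the summation, so that your intermediate bound acquires $+C\|\pt^k N\|_{|\al|-2}^2\le C\|\pt^k N\|_{|\al|-1}^2$ on the right; this is then harmlessly absorbed into the very $C\|\pt^k N\|_{|\al|-1}^2$ term that already appears in \eqref{NFla}. Incidentally, this is the same mechanism that forces the paper to keep $C\|\pt^k N\|_{|\al|-1}^2$ in \eqref{Nla} via their remainder $R_1^{k,\beta}$---so you have correctly identified the obstruction, only misplaced where it first appears.
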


\begin{proof}
	Let $\al,\beta\in\mathbb{N}^3$ be multi-indices and $k$ be integers satisfying
	\[
	|\al|\geq1, \quad 0\leq k\leq s-1, \quad |\al|+k\le s, \quad |\beta|\leq |\al|-1.
	\]
	Applying $\pt ^k\pa_x^\beta$ to \eqref{EM30}$_2$ with \eqref{h}, one has
	\begin{eqnarray}\label{bEqula}
		&&h^\prime(n)\D \pt ^k N_\beta +\pt^k F_\beta\nonumber\\
		&=&-\eps^2\pt^{k+1} u_\beta-\pt^k u_\beta-\pt^k \pa_x^\beta(\eps u\times B+\eps^2((u\cdot\D) u))-R_1^{k,\beta},
	\end{eqnarray}
	with the remaining term defined as
	\[
	R_1^{k,\beta}=\pt^k \pa_x^\beta\Big(\int_0^1h''(\tilde{n}^\theta)\mathrm{d}\theta N \D n_e\Big)+\pt^k \pa_x^\beta(h^\prime(n)\D N)-h^\prime(n)\D \pt ^k N_\beta.
	\]
	Taking the inner product of \eqref{bEqula} with $\D \pt ^k N_\beta$ leads to the following
	\begin{eqnarray}\label{startptdn}
		&&\left<\D\pt^k N_\beta, h^\prime(n)\D \pt^k N_\beta\right>+\left<\pt^k F_\beta, \D \pt^k N_\beta\right>+\left<R_1^{k,\beta}, \D \pt^kN_\beta \right>\nonumber\\
		&=&-\left<\eps^2 \pt^k \pa_x^\beta((u\cdot\D)u)+\eps^2\pt^{k+1} u_\beta+\pt^k u_\beta+\eps\pt^k \pa_x^\beta(u\times B), \D \pt^k N_\beta\right>\nonumber\\
		&\leq& C\|\pt ^k u\|_{|\al|-1}^{2}+C{\eps}^4\|\pt^{k+1} u\|_{|\al|-1}^{2}+\cubic+\dfrac{h_1}{3}\|\D\pt^k N_\beta\|^2.
	\end{eqnarray}
	It is clear that
	\[
	\left<\D\pt^k N_\beta, h^\prime(n)\D \pt^k N_\beta\right>\geq h_1\|\D \pt^k N_\beta\|^2,\quad \left<\pt^k F_\beta, \D \pt^k N_\beta\right>=\|\pt^k N_\beta\|^2.
	\]
	For the term containing $R_1^{k,\beta}$ in \eqref{startptdn}, by \eqref{Moser00}, one has
	\begin{eqnarray}
		\left|\left<R_1^{k,\beta}, \D \pt^k N_\beta \right>\right|\leq \cubic+C\|\pt^k N\|_{|\al|-1}^2+\dfrac{h_1}{3}\|\D\pt^k N_\beta\|^2.\nonumber
	\end{eqnarray}
	Combining all these estimates and adding for all $0\le|\beta|\leq |\al|-1$ yield
	\begin{equation}\label{Nla}
		\|\pt ^k N\|_{|\al|}^{2} \le C\|(\pt ^k u, {\eps}^2\pt^{k+1} u,\pt ^k N)\|_{|\al|-1}^{2}+\cubic.
	\end{equation}
	
	We then need to bound $\pt^k F_\beta$. The equation \eqref{EM30}$_2$ naturally leads to
	\begin{equation*}
		\|\pt^k F_{\beta}\|^{2} \le C\|\pt^k u\|_{|\al|-1}^{2}+C{\eps}^4\|\pt^{k+1} u\|_{|\al|-1}^{2}+C\|\pt^k N\|_{|\beta|+1}^{2}+\cubic.
	\end{equation*}
	Summing the above for all  $|\beta|\le |\al|-1$ yields
	\begin{equation*}
		\|\pt ^k F\|_{|\al|-1}^{2} \le C\|\pt ^k u\|_{|\al|-1}^{2}+C{\eps}^4\|\pt^{k+1} u\|_{|\al|-1}^{2}+C\|\pt ^k N\|_{|\al|}^{2}+C\norm{U}_{s}^{2}\norm{W}_{s}.
	\end{equation*}
	which ends the proof of \eqref{NFla} by combining \eqref{Nla}.
\end{proof}

\subsection*{Proof of Theorem \ref{theorem1}} Based on Lemmas \ref{lemma3.1}-\ref{Lemma3.3}, now we can finish the proof of Theorem \ref{theorem1}. Adding $\al$ up to $|\al|$ in \eqref{ptinduction} and substituting \eqref{NFla} into the resulting equation yield
\begin{eqnarray}\label{ptspatial}
	&&\sum_{\substack{1\leq |\gamma|\leq |\al|\\k+|\gamma|\leq s}}\frac{\mathrm{d}}{\mathrm{d}t}\left(\left< D_0(\eps)A_0(n)\pt^k U_\gamma,\pt^k U_\gamma\right>+\|\pt^k F_\gamma\|^2+\|\pt^k G_\gamma\|^2\right)\nonumber\\
	&&+2n_1\|\pt^k u\|_{|\al|}^2+\|\pt^k N\|_{|\al|}^2+\|\pt^k F\|_{|\al|-1}^{2} \nonumber\\
	&\leq&  C\mu\norm{u}_{s}^2+C\|(\pt ^k u, {\eps}^2\pt^{k+1} u,\pt ^k N)\|_{|\al|-1}^{2}+\cubic,
\end{eqnarray}
where $\gamma\in\mathbb{N}^3$ satisfies $1\leq |\gamma|\leq |\al|$. Applying the induction argument on $|\al|$ in  \eqref{ptspatial}, we transfer the space derivatives order by order to the time derivatives. Combining the base cases \eqref{base1} and \eqref{base2}, one has for $|\al|\geq 1$ and $k+|\al|\leq s$,
\begin{eqnarray}
	&&\sum_{\substack{m+|\gamma|\leq s}}\frac{\mathrm{d}}{\mathrm{d}t}\left(\left< D_0(\eps)A_0(n)\pt^m U_\gamma,\pt^m U_\gamma\right>+\|\pt^m F_\gamma\|^2+\|\pt^m G_\gamma\|^2\right)\nonumber\\
	&&+2n_1\|\pt^k u\|_{|\al|}^2+\|\pt^k N\|_{|\al|}^2+\|\pt^k F\|_{|\al|-1}^{2}\leq C\mu\norm{u}_{s}^2+\cubic.
\end{eqnarray}
Adding the above for all $0\leq k\leq s-1$ and $1\leq |\al|\leq s$ with $k+|\al|\leq s$ and combining \eqref{base1} imply that there exists a constant $c_0>0$, such that
\begin{eqnarray}
	&&\norm{u}_s^2+\norm{N}_{s-1}^2+\norm{\D N}_{s-1}^2+\norm{F}_{s-1}^2\nonumber\\
	&\leq&c_0\mu\norm{u}_s^2+\cubic\nonumber\\
	&&-C\sum_{\substack{m+|\gamma|\leq s}}\frac{\mathrm{d}}{\mathrm{d}t}\left(\left< D_0(\eps)A_0(n)\pt^m U_\gamma,\pt^m U_\gamma\right>+\|\pt^m F_\gamma\|^2+\|\pt^m G_\gamma\|^2\right).\nonumber
\end{eqnarray}
By choosing $\mu>0$ such that
\begin{equation}\label{mu}
	c_0\mu<\dfrac{1}{2},
\end{equation}
one obtains
\begin{eqnarray}\label{middle4}
	&&\sum_{\substack{m+|\gamma|\leq s}}\frac{\mathrm{d}}{\mathrm{d}t}\left(\left< D_0(\eps)A_0(n)\pt^m U_\gamma,\pt^m U_\gamma\right>+\|\pt^m F_\gamma\|^2+\|\pt^m G_\gamma\|^2\right)\nonumber\\
	&&+\norm{u}_s^2+\norm{N}_{s-1}^2+\norm{\D N}_{s-1}^2+\norm{F}_{s-1}^2\leq\cubic.
\end{eqnarray}
Noticing the equivalence of $\|\pt^m  W_\gamma\|^2$ and $\left< D_0(\eps)A_0(n)\pt^m U_\gamma,\pt^m U_\gamma\right>+\|\pt^m F_\gamma\|^2+\|\pt^m G_\gamma\|^2$, integrating \eqref{middle4} over $[0,T]$ yields
\[
\norm{W(T)}_s^2+\int_0^T\left(\norm{u(t)}_s^2+\norm{(N,\D N, F)(t)}_{s-1}^2\right)\mathrm{d}t\leq C\|W(0)\|_s^2.
\]
It remains to bound $\pt^s N$ and $\D\times G$. Indeed,
\begin{eqnarray}\label{N-s}
	\|\pt^s N\|^2 &\leq& C\|\pt^{s-1} u\|_1^2\leq C\norm{u}_s^2, \\
	\norm{\D\times G}_{s-2}^2&\leq& \eps^2\norm{\pt F}_{s-2}^2 +\eps^2\norm{nu}_{s-2}^2\leq C\norm{U}_{s}^2. \nonumber
\end{eqnarray}
Hence the proof of Theorem \ref{theorem1} is complete. \hfill $\square$

\subsection*{Proof of Theorem \ref{theorem2}}
Once obtaining the uniform energy estimates \eqref{2-1}, we can prove Theorem \ref{theorem2}. Since sequences $\left\{(N^\eps,F^\eps,G^\eps)\right\}_{\eps>0}$ and $\{u^\eps\}_{\eps>0}$ are  bounded in $L^{\infty}\left(\mathbb{R}^{+} ; H^{s}\right)$ and $L^{2}\left(\mathbb{R}^{+}; H^{s}\right)$, respectively,  there exist functions $(\bar{N},\bar{u},\bar{F},\bar{G})$ such that
\begin{align*}
	\begin{split}
		(N^\eps,F^\eps,G^\eps)\stackrel{\ast}{\rightharpoonup} (\bar{N},\bar{F},\bar{G}),\quad & \text{weakly-*}\,\,\text{in}\,\, L^{\infty}\left(\mathbb{R}^+;H^s\right),\\
		u^\eps \rightharpoonup \bar{u},\quad &\text{weakly}\,\,\text{in}\,\, L^{2}\left(\mathbb{R}^+;H^s\right).
	\end{split}
\end{align*}
Besides, as $\eps\to 0$, in the sense of distributions, it holds
\begin{equation}\label{conv}
	\begin{array}{r}
		\eps^{2}\left[\pt u^\eps+(u^\eps\cdot\D) u^\eps\right]+\eps\left(u^\eps \times B^\eps\right) \rightarrow 0, \\
		\eps\left(\pt F^\eps-n^\eps u^\eps\right) \rightarrow 0, \quad \eps \pt G^\eps \rightarrow 0,
	\end{array}
\end{equation}
which allows us to pass to the limit for Maxwell equations in \eqref{EM30}, leading to
\begin{equation}
	\left\{ \begin{array}{l}
		\nabla\times \bar{G}=0,\quad\dive \bar{F} = -\bar{N},\\
		\nabla\times \bar{F}=0,\quad \dive \bar{G}=0.
	\end{array} \right.
\end{equation}
Here we learn that $\bar{G}$ is a constant vector and since $B_0^\eps-B_e$ converges weakly to $0$ in $H^s$, one obtains that $\bar{G}=0$. Let
\[
\bar{n}=\bar{N}+n_e,\quad \bar{E}=\bar{F}+E_e.
\]
Combining \eqref{PHT}$_2$--\eqref{PHT}$_3$, we arrive at
\begin{equation*}
\dive \bar{E} = b(x)-\bar{n}, \quad \D\times \bar{E}=0,
\end{equation*}
which implies that there exists a potential $\D\bar{\phi}$ satisfying
\begin{equation*}
    \Delta \bar{\phi}=b(x)-\bar{n}, \quad \bar{E}=\nabla \bar{\phi}.
\end{equation*}

In addition, from \eqref{2-1}, the sequence $\left\{\pt N^\eps\right\}_{\eps>0}$ is bounded in $L^{2}([0,T];H^{s-1})$. By the classical compactness theories (see \cite{Simon1987}), for any $0\le s'<s$, the sequence $\left\{N^\eps\right\}_{\eps>0}$ is relative compact in $\mathcal{C}([0,T] ;H^{s'})$, which yields that up to subsequences, $N^\eps$ converges strongly to $\bar{N}$. This together with \eqref{conv} enable us to pass to the limit of Euler equations in system \eqref{EM30} in the sense of distributions to obtain
\begin{equation*}
	\left\{ \begin{array}{l}
		{\pa _{t}}\bar{N} + \bar{u}\cdot\D\bar{N}+(\bar{N}+n_e)\cdot\dive\bar{u}+\bar{u}\cdot\D n_e = 0,\\
		\bar{u}=-\D (h\left(\bar{N}+n_e\right)-h(n_e))-\nabla\bar{F},
	\end{array} \right.
\end{equation*}
which further combining \eqref{PHT} yields \eqref{EM-drift-diffusion}--\eqref{baru} and thus ends the proof. \hfill $\square$

\section{Convergence rate for Euler-Maxwell system}
In this section, we establish the global-in-time error estimates between smooth solutions of the original systems \eqref{EM2} and the drift-diffusion system \eqref{EM-drift-diffusion} for periodic problems $\mathbb{K}=\mathbb{T}$. The proof is based on the results of the uniform estimate and the global-in-time convergence obtained in the previous section. For simplicity, we continue to drop the superscript $\eps$. In this section, we denote $(n,u,E,B)$ the global smooth solution to \eqref{EM30},  $(\bar{n},\bar{u},\bar{\phi})$  the global smooth solution to \eqref{EM-drift-diffusion}--\eqref{baru} and $(n_e,E_e,B_e)$ the stationary solution to \eqref{PHT}. For simplicity,  we denote
\[
\bar{N}=\bar{n}-n_e,\quad \bar{F}=\bar{E}-E_e.
\]

\subsection{Estimates for the drift-diffusion system} In order to study the error estimates, one first needs to study the limit equations. The next lemma concerns the estimates of solutions to drift-diffusion equations.

\begin{lemma}\label{lemmaA} Assume that $\|\bar{n}_{0}-n_e\|_{s}\leq \delta$, then $(\bar{N},\bar{u},\bar{F})$ satisfies
	\begin{eqnarray}\label{bar{rho}_es}
	&&	\|\bar{N}(t)\|_{s}^{2}+\|\pt\bar{N}(t)\|_{s-1}+\int_{0}^{t}\|\bar{N}(\tau)\|_{s+1}^{2}+\|\pt\bar{N}(\tau)\|_{s-1}^2 \mathrm{d}\tau \leq C\delta,\\
\label{bar{E}_es}
	&&	\|\bar{F}(t)\|_{s}^{2}+\|\pt \bar{F}(t)\|_{s-1}^{2}+\int_{0}^{t}\left(\|\bar{F}(\tau)\|_{s+1}^{2}+\|\pt \bar{F}(\tau)\|_{s}^{2}\right) \mathrm{d}\tau \leq C\delta, \\
&&\label{bar{v}_es}
		\|\bar{u}(t)\|_{s-1}^{2}+\|\pt \bar{u}(t)\|_{s-2}^{2}+\int_{0}^{t}\left(\|\bar{u}(\tau)\|_{s}^{2}+\|\pt\bar{u}(\tau)\|_{s-1}^{2}\right) \mathrm{d}\tau \leq C\delta.
	\end{eqnarray}
\end{lemma}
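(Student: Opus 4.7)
The plan is to view \eqref{EM-drift-diffusion}--\eqref{baru} as a quasi-linear parabolic equation for $\bar{N}:=\bar{n}-n_e$ coupled to the elliptic equation $\Delta\bar{\psi}=-\bar{N}$ for the potential perturbation $\bar{\psi}:=\bar{\phi}-\phi_e$, and to close the estimates by energy methods of relative-entropy type. Since the equilibrium relation $\D h(n_e)=-E_e=-\D\phi_e$ makes $h(n_e)+\phi_e$ spatially constant, the velocity formula rewrites as $\bar{u}=-\D(h(\bar{n})-h(n_e))-\bar{F}$ with $\bar{F}=\D\bar{\psi}$, and the continuity equation takes the form
\begin{equation*}
\pt\bar{N}=\dive\bigl(\bar{n}\D(h(\bar{n})-h(n_e))\bigr)+\dive(\bar{n}\bar{F}),
\end{equation*}
whose principal part $\dive(\bar{n}h'(\bar{n})\D\bar{N})$ is uniformly parabolic with diffusion bounded below by $n_1 h_1>0$ thanks to \eqref{3.1}. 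The zero-mean condition \eqref{conditionphi} together with standard elliptic regularity for the Poisson equation yields the gain $\|\bar{F}\|_{k+1}\leq C\|\bar{N}\|_{k-1}$ that is used repeatedly.

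The first step is the basic $L^2$ estimate obtained by testing with $h(\bar{n})-h(n_e)$, giving
\begin{equation*}
\tfrac{d}{dt}\E(\bar{n}|n_e)+\int\bar{n}|\D(h(\bar{n})-h(n_e))|^2\,dx=\int\dive(\bar{n}\bar{F})\,(h(\bar{n})-h(n_e))\,dx,
\end{equation*}
where $\E(\bar{n}|n_e):=\int\int_{n_e}^{\bar{n}}(h(s)-h(n_e))\,ds\,dx$ is equivalent to $\|\bar{N}\|^2$ for small $\|\bar{N}\|_\infty$. Expanding $\dive(\bar{n}\bar{F})=(\D n_e+\D\bar{N})\cdot\bar{F}-(n_e+\bar{N})\bar{N}$ and using $h(\bar{n})-h(n_e)=h'(n_e)\bar{N}+O(\bar{N}^2)$, the $-n_e\bar{N}$ piece moves to the left-hand side as a favorable zeroth-order term $\int n_e h'(n_e)\bar{N}^2\,dx$, while the cross terms involving $\D n_e$, $\D\bar{N}$ and $\bar{F}$ are absorbed by Cauchy--Schwarz together with smallness of $\delta$ and the elliptic gain $\|\bar{F}\|\leq C\|\bar{N}\|_{-2}$.

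Higher-order bounds are then obtained by applying $\pa_x^\al$ with $1\leq|\al|\leq s$ to the evolution equation and testing with $\pa_x^\al\bar{N}$, controlling the commutators by the Moser-type inequalities in Lemma 2.1 and the composite-function estimate in Lemma 2.2. The trilinear remainders close under smallness of $\|\bar{N}_0\|_s$ and yield $\|\bar{N}(t)\|_s^2+\int_0^t\|\bar{N}(\tau)\|_{s+1}^2\,d\tau\leq C\delta$, the spatial part of \eqref{bar{rho}_es}. The remaining bounds are read off the equations: the Poisson relation $\Delta\bar{\psi}=-\bar{N}$ gives the spatial parts of \eqref{bar{E}_es} via elliptic regularity, $\|\pt\bar{N}\|_{s-1}$ is bounded directly by the right-hand side of the evolution equation (hence the time-integrated dissipation bound follows from $\int_0^t\|\bar{N}\|_{s+1}^2\,d\tau$), differentiating $\Delta\bar{\psi}=-\bar{N}$ in $t$ controls $\pt\bar{F}$, and the formula $\bar{u}=-\D(h(\bar{n})-h(n_e))-\bar{F}$ together with its time derivative yields \eqref{bar{v}_es}.

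The main obstacle is the non-constant equilibrium. The inhomogeneity $\bar{n}(h'(\bar{n})-h'(n_e))\D n_e$ in the flux, and analogous commutator pieces in the higher-order estimates, carry $\D n_e$, which is bounded in $H^{s-1}$ but not small. These terms must be retained as $O(\bar{N})$ coefficient corrections rather than treated as sources; closure depends on the uniform ellipticity $\bar{n}h'(\bar{n})\geq n_1 h_1$, the favorable sign of the zeroth-order damping produced by $\dive\bar{F}=-\bar{N}$, and the smallness of $\delta$ in $H^s$ to absorb the resulting cubic contributions. In this sense the proof parallels the treatment of \eqref{EM2} in Section~3 but with the Maxwell block replaced by the elliptic Poisson coupling, which is simpler and allows the parabolic dissipation in $\bar{N}$ to do all the work.
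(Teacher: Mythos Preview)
Your approach is correct in spirit but takes a genuinely different route from the paper's. You propose a direct parabolic energy method on the drift-diffusion system: test the quasi-linear heat equation for $\bar N$ with $h(\bar n)-h(n_e)$ at the $L^2$ level, then differentiate and use Moser-type commutator bounds for higher orders, extracting the remaining estimates for $\bar F$, $\bar u$ and their time derivatives from the Poisson equation and the velocity formula. This is the standard self-contained well-posedness argument for drift-diffusion near a non-constant steady state, and with the zeroth-order damping coming from $\dive\bar F=-\bar N$ together with an induction (or interpolation) on the order of $\alpha$ to absorb the non-small $\nabla n_e$ commutators, it closes.

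The paper does something much shorter and quite different: it never runs energy estimates on the limit system at all. Instead it invokes the already-established uniform-in-$\eps$ bounds of Theorem~\ref{theorem1} and the weak convergence of Theorem~\ref{theorem2}, then uses lower semi-continuity of the $H^s$-norm under weak limits to transfer those bounds directly to $(\bar N,\bar u,\bar F)$. The remaining regularity ($\bar N\in L^2_tH^{s+1}$, $\partial_t\bar F$, $\partial_t\bar u$, etc.) is then read off algebraically from \eqref{baru}, from $\Delta\bar\Phi=-\bar N$, and from a one-line energy identity for $\partial_t\bar F$. So the paper's proof is essentially free once Section~3 is in hand, whereas yours is independent of Section~3 but repeats in parabolic form the same induction machinery used there. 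Either works; the paper's is more economical in context, yours is more portable. One small slip: the elliptic gain you quote should be $\|\bar F\|_{k+1}\le C\|\bar N\|_{k}$, not $\|\bar N\|_{k-1}$.
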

\begin{proof}
	By lower semi-continuity of $H^s$-norms and weak convergence \eqref{cov}, one has
	\begin{equation}\label{bar{n}_es}
		\|\bar{N}(t)\|_{s}^{2}+\|\bar{F}(t)\|_s^2+\int_{0}^{t}\left(\|\bar{N}(\tau)\|_{s}^{2}+\|\bar{u}(\tau)\|_s^2\right)\mathrm{d}\tau \leq C\delta, \quad \forall\, t>0.
	\end{equation}
	We denote $\bar{\Phi}=\bar{\phi}+ h(n_e)$, hence $\bar{F}=\D \bar{\Phi}$ and $\Delta \bar{\Phi}=-\bar{N}.$ By classical elliptic theories (See \cite{Gilbarg1983}), one obtains for any multi-index $\al\in\mathbb{N}^3$,
	\begin{equation}\label{barEbarn}
		\|\bar{F}\|_{|\al|}=\|\D\bar{\Phi}\|_{|\al|}\leq C\|\bar{N}\|_{|\al|},
	\end{equation}
	which implies the boundedness of $\bar{F}$ in $L^2(\R^+;H^s)$. Besides, relation \eqref{baru} implies
	\begin{equation}\label{DN}
		\bar{u}=-\bar{F}-[\D{h(\bar{n})}-\D{h(n_e)}]=-\bar{F}-h^\prime(\bar{n})\D\bar{N}-(h^\prime(\bar{n})-h^\prime(n_e))\D n_e,
	\end{equation}
	one thus obtains the boundedness of $\bar{u}$ in $L^\infty(\R^+;H^{s-1})$. Noticing the boundedness of $\bar{u},\bar{F},\bar{N}$ in $L^2(\mathbb{R}^+;H^s)$, one obtains from \eqref{DN} that
	\[
	\|\D\bar{N}\|_s\leq C\left(\|\bar{N}\|_s+\|\bar{F}\|_s+\|\bar{u}\|_s\right),
	\]
	which yields the boundedness of $\bar{F}$ in $L^2(\R^+;H^{s+1})$. Hence one concludes
	\begin{equation}\label{naka00}
		\|(\bar{N},\bar{F})(t)\|_s^2+\|\bar{u}(t)\|_{s-1}^2+\int_{0}^{t}\left(\|(\bar{N},\bar{F})(\tau)\|_{s+1}^2+\|\bar{u}(\tau)\|_{s}^2\right)\mathrm{d}\tau\leq C\delta.
	\end{equation}

    Next, we estimate $(\bar{N},\bar{u},\bar{F})$ with time derivatives. By lower semi-continuity of $H^s$-norms and weak convergence \eqref{cov}, one has
    \[
    \|(\pt\bar{N},\pt\bar{F})(t)\|_{s-1}^2+\int_0^t(\|\pt\bar{N}(\tau)\|_{s-1}^2+\|\pt\bar{u}(\tau)\|_{s-1}^2)\mathrm{d}\tau\leq C\delta.
    \]
    For  multi-indices $\beta\in\mathbb{N}^3$ satisfying $|\beta|\leq s$, applying $\pa_x^\beta\pt$ to both sides of $\Delta\bar{\Phi}=-\bar{N}$ and taking the inner product of the resulting equation with $ \pa_x^\beta\pt \bar{\Phi}$, one has
	\begin{eqnarray*}
		\|\pt\pa_x^{\beta}\bar{F}\|^2 \leq \left|\left<\pa_x^{\beta}
		\dive(\bar{n}\bar{u}),\pt\pa_x^{\beta}\Phi\right>\right| &=& \left|\left<\pa_x^{\beta}(\bar{n}\bar{u}),\pt\pa_x^{\beta}\bar{F}\right>\right|\\
		&\le& \dfrac{1}{2}\|\pt\pa_x^{\beta}\bar{F}\|^2 + C\|\pa_x^{\beta}(\bar{n}\bar{u})\|^2.
	\end{eqnarray*}
	Therefore, the boundedness of $\pt\bar{F}$ in $L^\infty(\R^+;H^{s-1})\cap L^2(\R^+;H^s)$ is obtained. It remains to estimate $\pt\bar{u}$. For multi-indices $\gamma\in\mathbb{N}^3$ satisfying $|\gamma|\le s-2$, applying $\pt\pa_x^\gamma$ to the relation for $\bar{u}$ \eqref{DN} yields
	\begin{eqnarray*}
		\|\pt\pa_x^\gamma\bar{u}\|
		&=& -\pa_x^\gamma\pt(h^\prime(\bar{n})\D\bar{N}-\pt\left((h^\prime(\bar{n})-h^\prime(n_e))\D n_e\right)+\bar{F})\nonumber\\
		 &\le& C\|\pt\bar{N}\|_{|\gamma|+1}+\|\pt \bar{F}\|_{|\gamma|},
	\end{eqnarray*}
	which implies $\pt\bar{u}\in L^\infty(\R^+;H^{s-2})$. Combining all these estimates and \eqref{naka00} reaches the desired estimates \eqref{bar{rho}_es}--\eqref{bar{v}_es}.
\end{proof}

\subsection{Energy estimates for error functions}
For readers' convenience, we state our strategies for obtaining the global-in-time convergence rates between smooth solutions to \eqref{EM2} and \eqref{EM-drift-diffusion}. Let $T>0$ be some positive time and $\eta>0$ be a sufficiently small constant, of which the value is determined in \eqref{eta}. In addition, we denote $\W=(\N,\eps\mathcal{U}^\top,\F^\top,\G^\top)^\top$ with
\begin{equation*}
	\left(\N, \mathcal{U}, \F,\G\right)=\left(n-\bar{n}, u-\bar{u}, E-\bar{E}, B-\bar{B}\right).
\end{equation*}
Then our proof outline is as follows.
\begin{itemize}
	\item We write the error system between \eqref{EM2} and \eqref{EM-drift-diffusion} into an anti-symmetric form, of which we are able to take advantage to establish estimates for $(\N,\eps\U)$ in  $L^\infty(\R^+;H^{s-1})$ as well as $\U$ in $L^2(\R^+;H^{s-1})$(see Lemma \ref{lemma4.3-anti}).
	\item Stream function technique is applied to establish estimates for $\F$ and $\G$ in $L^\infty(\R^+;H^{s-1})$(see Lemmas \ref{lemma6.2}-\ref{lemma4.5}) as well as $(\N,\F)$ in $L^2(\R^+;H^{s-1})$ (see Lemma \ref{lemma4.6}) together with $\D\G$ in $L^2(\R^+;H^{s-2})$ by Maxwell's equations.
	\item Finally, an induction argument is carried out on the order of the derivatives at the end of the section.
\end{itemize}

In order to carry out the proof, we first reveal the anti-symmetric structure of the error system. Noticing \eqref{EM2} and \eqref{EM-limit}, one has
\begin{equation}\label{global-limit}
	\begin{cases}
		\pt\N + n \dive\U + \D\N\cdot u + \D\bar{n}\cdot\U = - \N\dive\bar{u} ,\\
		\eps^2\pt\U+ \eps^2 ((u \cdot \D)) \U+h'(n)\D\N +\N\D  h'(\bar{n})\nonumber\\
		\qquad\qquad\qquad\qquad\qquad=-\F-\U-\eps u \times B - r(\bar{n},\N) - \eps^2\pt\bar{u} -\eps^2 ((u\cdot\D)\bar{u}),
	\end{cases}
\end{equation}
which can be written as the following first-order quasi-linear system:
\begin{equation}\label{Sym-limit}
	D_0(\eps)\pt \V+\sum_{j=1}^{3} A_{j}(n,u) \pa_{x_{j}} \V+\hat{L}(\bar{n}) \V= \widetilde{f},
\end{equation}
with $\V=(\N,\U^\top)^\top, D_0(\eps)=\text{diag}(1,\eps^2\mathbb{I}_{3})$. For $j=1,2,3$,
\begin{eqnarray*}
A_{j}(n, u)&=&\left(\begin{array}{cc}u_{j} & n e_{j}^\top \\
	h'(n) e_{j} & \eps^2 u_{j} \mathbb{I}_{3}
\end{array}\right),\quad
\hat{L}(\bar{n})=\left(
\begin{matrix}
	0& (\D \bar{n})^\top\\
	\D h^{\prime}(\bar{n})&0\\
\end{matrix}
\right)\nonumber\\
\widetilde{f}&=&\left(\begin{matrix}
	- \N\dive\bar{u}\\
	-\F-\U-\eps u \times B - r(\bar{n},\N) - \eps^2\pt\bar{u} -\eps^2 (u\cdot\D)\bar{u}
\end{matrix}\right).
\end{eqnarray*}
The remaining term $r$ is defined as
\[
r(\bar{n},\N)=\left(h^{\prime}(n)-h^{\prime}(\bar{n})-h^{\prime \prime}(\bar{n}) \N\right) \D \bar{n}=O\left(\N^{2}\right).
\]
For any multi-index $\al\in\mathbb{N}^3$, we denote for simplicity:
\begin{equation*}
	\V_\al=\pa_x^\al \V, \quad \left(\N_{\al}, \mathcal{U}_{\al}, \F_\al, \G_{\al}\right)=\left(\pa_{x}^{\al} \N, \pa_{x}^{\al} \mathcal{U}, \pa_{x}^{\al} \F, \pa_{x}^{\al} \G\right).
\end{equation*}
as well as the following functionals
\[
\E_T=\sup_{t\in[0,T]}\|\W(t)\|_{s-1}^2, \quad \mD_T=\int_0^T\|(\V,\F)(t)\|_{s-1}^2\mathrm{d}t+\int_0^T\|\D\times\G(t)\|_{s-2}^2\mathrm{d}t.
\]

\subsubsection{Application of the anti-symmetric structure}

\begin{lemma}\label{lemma4.3-anti}
	For all $\al\in\mathbb{N}^3$ with $1\leq |\al|\leq s-1$, it holds
	\begin{align}\label{NU-limit1}
		&\|(\N_\al,\eps\U_\al)(T)\|^2 + 2n_1\int_0^T\|\U_\al(t)\|^2 \mathrm{d}t   +2\int_0^T\left<n\F_\al,\U_\al\right>\mathrm{d}t  \nonumber\\
		\leq &C\int_0^T\|\N(t)\|_{|\al|}^2 \mathrm{d}t+C\eps^{2p_1}+C(\delta+\eta)(\E_T+\mD_T).
	\end{align}
Especially, when $|\al|=0$, it holds
\begin{eqnarray}\label{zeror}
	&&\|(\N,\eps\U)(T)\|^2 + 2n_1\int_0^T\|\U(t)\|^2 \mathrm{d}t   +2\int_0^T\left<n\F,\U\right>\mathrm{d}t\nonumber\\
	&\leq& C\eps^{2p_1}+C(\delta+\eta)(\E_T+\mD_T).
\end{eqnarray}
\end{lemma}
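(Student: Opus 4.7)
The plan is to rewrite the error system \eqref{global-limit} as the symmetric quasilinear system \eqref{Sym-limit}, apply $\pa_x^\al$ for $|\al|\leq s-1$, and test the result against $2A_0(n)\V_\al$ in $L^2$. This produces an energy identity of the same shape as those in Lemmas \ref{lemma3.1}--\ref{lemma3.2}, but with $(\bar{n},\bar{u},\bar{F})$ replacing the equilibrium in the zeroth-order coupling matrix $\hat{L}(\bar{n})$ and in the forcing $\widetilde{f}$. Schematically,
\[
\frac{d}{dt}\langle D_0(\eps)A_0(n)\V_\al,\V_\al\rangle = \mathrm{(I)}+\mathrm{(II)}+\mathrm{(III)}+\mathrm{(IV)},
\]
where (I) is the symmetrizer-derivative contribution $\langle \dive A(n,u)\V_\al,\V_\al\rangle$ with $\dive A=D_0(\eps)\pt A_0+\sum_j\pa_{x_j}\widetilde{A}_j$, (II) is the commutator $2\langle A_0(n)\sum_j[\pa_x^\al,A_j(n,u)]\pa_{x_j}\V,\V_\al\rangle$, (III) is the zeroth-order interaction $-2\langle A_0(n)\pa_x^\al(\hat{L}(\bar{n})\V),\V_\al\rangle$, and (IV) is the source $2\langle A_0(n)\pa_x^\al\widetilde{f},\V_\al\rangle$.

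The central mechanism is the anti-symmetric cancellation in $\mathrm{(I)}+\mathrm{(III)}$. Repeating the algebra of \eqref{antisymm} with $n_e$ replaced by $\bar{n}$ yields $B_{12}(\bar{n},\D\bar{n})^\top+B_{21}(\bar{n},\D\bar{n})=0$, so the dangerous cross-terms between $\N_\al$ and $\U_\al$, whose coefficients $\D P'(\bar{n})$ and $\D h'(\bar{n})$ carry no smallness, cancel up to an $O(\N)$ remainder obtained by Taylor-expanding around $\bar{n}$. The remaining diagonal contributions in (I), the commutator (II), and all derivative-distribution terms produced by $\pa_x^\al$ acting through $\hat{L}(\bar{n})$ are then controlled by the Moser-type inequalities \eqref{Moser00}--\eqref{Mosertt}, combined with the uniform bound \eqref{2-1} on $(N,u,F,G)$ from Theorem \ref{theorem1} and the estimates \eqref{bar{rho}_es}--\eqref{bar{v}_es} for $(\bar{N},\bar{u},\bar{F})$; each is of size $(\delta+\eta)(\E_T+\mD_T)$.

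For the source (IV), I would split $\widetilde{f}$ into its six components and treat them one by one. The damping $-\U$ produces the dissipation $+2n_1\int_0^T\|\U_\al\|^2\,\mathrm{d}t$ via $n\geq n_1$; the electric force $-\F$ produces the cross-term $2\int_0^T\langle n\F_\al,\U_\al\rangle\,\mathrm{d}t$ that we leave on the left, to be absorbed through Maxwell's equations in a later lemma; the Taylor remainder $r(\bar{n},\N)=O(\N^2)$ and the convective source $-\N\dive\bar{u}$ (using the $L^2_tL^\infty_x$ control of $\dive\bar{u}$ provided by \eqref{bar{v}_es}) absorb into $(\delta+\eta)(\E_T+\mD_T)$ plus the admitted $C\int_0^T\|\N\|_{|\al|}^2\,\mathrm{d}t$; the consistency errors $-\eps^2\pt\bar{u}$ and $-\eps^2(u\cdot\D)\bar{u}$ yield a $C\eps^{2p_1}$ contribution via $\eps^2\leq\eps^{2p_1}$ (since $p_1\leq 1$) together with the $L^2_tH^{s-1}$ bound on $\pt\bar{u}$; finally, the Lorentz magnetic term $-\eps u\times B$ vanishes identically when $|\al|=0$ by the pointwise identity $(u\times B)\cdot u\equiv 0$, which explains why \eqref{zeror} loses both the commutator contribution and this term, while for $|\al|\geq 1$ we decompose $B=B_e+G$ and $u=\bar{u}+\U$ and exploit $\|\eps u\|_s\leq C\delta$ from Theorem \ref{theorem1} together with $\bar{u}\in L^2_tH^s$ to bound the residual by $(\delta+\eta)(\E_T+\mD_T)$. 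Integrating on $[0,T]$, using the equivalence $\langle D_0(\eps)A_0(n)\V_\al,\V_\al\rangle\simeq\|(\N_\al,\eps\U_\al)\|^2$, and absorbing the initial trace $\|(\N_\al,\eps\U_\al)(0)\|^2$ into $C\eps^{2p_1}$ via the hypothesis on the initial data produces \eqref{NU-limit1}; specializing to $|\al|=0$ yields \eqref{zeror}.

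The step I expect to be the main obstacle is the cancellation in $\mathrm{(I)}+\mathrm{(III)}$: since $\D\bar{n}$ and $\D h'(\bar{n})$ are $O(1)$ in size, no absorption is possible and one must rely on the exact algebraic identity $B_{12}+B_{21}^\top=0$ at $\bar{n}$, combined with a careful Taylor expansion that leaves every surviving factor with either $\N$, $\D\N$, or a $\bar{u}$-dependent weight integrable in time through \eqref{bar{v}_es}. A secondary difficulty is isolating the cross-term $2\int_0^T\langle n\F_\al,\U_\al\rangle\,\mathrm{d}t$ without performing spatial integration by parts, so that it remains in a form suitable for cancellation against the Maxwell-based estimates in the subsequent lemmas.
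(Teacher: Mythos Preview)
Your approach is essentially the same as the paper's: apply $\pa_x^\al$ to \eqref{Sym-limit}, test against $2A_0(n)\V_\al$, and decompose into four pieces (the paper labels them $J_1^\al,\dots,J_4^\al$). The anti-symmetric cancellation at $n=\bar{n}$ is exactly what the paper isolates in its term $J_2^\al$, and your handling of the damping, the electric cross-term, and the $\eps^2$-consistency errors in (IV) matches the paper's $J_4^\al$.

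Two minor inaccuracies to fix. First, the Lorentz term does \emph{not} vanish identically for $|\al|=0$: the test function is $n\U$, not $nu$, so $(u\times B)\cdot\U\neq 0$ in general. The correct argument is to split $u=\U+\bar{u}$, use $(\U\times B)\cdot\U=0$ pointwise, and bound the remaining $\bar{u}$-piece by $\eta\|\U\|^2+C\eps^2\|\bar{u}\|^2$, whose time integral gives $\eta\mD_T+C\eps^2$ via \eqref{bar{v}_es}. Second, the admitted term $C\int_0^T\|\N(t)\|_{|\al|}^2\,\mathrm{d}t$ in \eqref{NU-limit1} originates from the commutators (your (II) and the $\hat{L}$-commutator portion of (III)), not from the source (IV): the coefficients $\D n$ and $\D\bar{n}$ appearing in those commutators are $O(1)$, so Moser-type bounds yield cross-terms of size $\|\U\|_{|\al|}\|\N\|_{|\al|}$, which Young's inequality splits as $\eta\|\U\|_{|\al|}^2+C\|\N\|_{|\al|}^2$. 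Your claim that the commutator is purely of size $(\delta+\eta)(\E_T+\mD_T)$ is therefore not quite right. Neither point breaks the overall argument, but both affect the internal bookkeeping.
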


\begin{proof} For multi-indies $\al\in\mathbb{N}^3$ with $|\al|\leq s$, applying $\pa_{x}^\al$ to \eqref{Sym-limit}, one obtains
	\begin{equation}\label{sym-limit-al}
		D_0(\eps)\pt \V_\al+\sum_{j=1}^{3} A_{j}(n,u) \pa_{x_{j}} \V_\al+\hat{L}(\bar{n}) \V_\al= \pa_{x}^\al\widetilde{f} + \hat{g}^\al,
	\end{equation}
with the commutator defined as
	\[
	\hat{g}^\al=\sum_{j=1}^{3} A_{j}(n, u) \pa_{x_{j}} \V_\al - \pa_x^\al\left(\sum_{j=1}^{3} A_{j}(n, u) \pa_{x_{j}} \V\right)+\hat{L}(\bar{n}) \V_\al -\pa_x^\al\left(\hat{L}(\bar{n}) \V\right) .
	\]
Taking the inner product of \eqref{sym-limit-al} with $2A_0(n)\V_\al$ with $A_0(n)$ defined in \eqref{symmetrizer} yields
	\begin{eqnarray}\label{startregular}
		\frac{\mathrm{d}}{\mathrm{d} t}\left< D_0(\eps)A_{0}(n) \V_\al, \V_\al\right>&=&\left<D_0(\eps)\pt A_0(n)\V_\al, \V_\al \right>+\left<\hat{B}(\V,\D \V)\V_\al,\V_\al\right>\nonumber\\
		&&+2\left< A_{0}(n)  \hat{g}^\al, \V_\al\right>+2\left< A_{0}(n) \pa_{x}^\al\widetilde{f},\V_\al\right>:=\sum_{j=1}^4{J_j^\al},\quad\quad
	\end{eqnarray}
with the natural correspondence of $\{J_j^\al\}_{j=1}^4$, and matrix $\hat{B}(\V, \D \V)$  defined as
	\begin{eqnarray}
		\hat{B}(\V,\D \V)&=&\sum_{j=1}^{3} \pa_{x_{j}}(A_0(n) A_{j}(n, u))-2 A_{0}(n) \hat{L}(\bar{n})\nonumber\\
		&=&\left(\begin{array}{cc}\dive\left(h^{\prime}(n) u\right) & \left(\D P^{\prime}(n)-2 h^{\prime}(n) \D \bar{n}\right)^\top \\ \D P^{\prime}(n)-2 n \D h^{\prime}(\bar{n})& \dive(\eps^2  nu) \mathbb{I}_{3}\end{array}\right),\nonumber
	\end{eqnarray}
	which is anti-symmetric at $n=\bar{n}$ by using the similar anti-symmetry technique stated in \eqref{antisymm}.	Similar to \eqref{It1}, one has for $J_1^\al$,
	\begin{equation}\label{J_1}
		\int_0^T|J_1^\al|\mathrm{d}t\leq C\sup_{t\in[0,T]}\norm{\pt N(t)}_{s-1}\int_0^T\|\V(t)\|_{s-1}^2\mathrm{d}t\leq  C\delta\mD_T.
	\end{equation}
For $J_2^\al$, similar to \eqref{It2}, one has
	\begin{eqnarray}\label{J_2}
		\int_0^T|J_2^\al|\mathrm{d}t&\leq& C\int_0^T\|u\|_s(\|\N_\al\|^2+\|\eps\U_\al\|^2)\mathrm{d}t+C\int_0^T\|\N\|_s\|\N_\al\|\|\U_\al\|\mathrm{d}t\nonumber\\
		&\leq& C\int_0^T\|(u,N,\bar{N})(t)\|_{s}^2\|\V(t)\|_{s-1}^2\mathrm{d}t\leq C\delta\mD_T.
	\end{eqnarray}
For $J_3^\al$, by \eqref{Moser00}, it holds
	\begin{equation*}
		\begin{aligned}
			\dfrac{1}{2}|J_3^\al| \leq
			& |\left<u\cdot\D\N_\al-\pa_x^\al(u\cdot\D\N),h^\prime(n)\N_\al\right>|+|\left<n\dive{\U_\al}-\pa_x^\al(n\dive{\U}),h^\prime(n)\N_\al\right>|\\
			&+|\left<h'(n)\D\N_\al-\pa_x^\al(h'(n)\D\N),n\U_\al\right>|\nonumber\\
			&+\eps^2|\left< ((u\cdot\D)\U_\al)-\pa_x^\al((u\cdot\D)\U),n\U_\al\right>|\\
			&+|\left<(\D\bar{n})\cdot\U_\al-\pa_x^\al((\D\bar{n})\cdot\U),h^\prime(n)\N_\al\right>|\nonumber\\
			&+|\left<\D h'(\bar{n})\N_\al-\pa_x^\al(\D h'(\bar{n})\N),n\U_\al\right>|\\\leq
			& C\|u\|_{s}\|\N\|_{|\al|}^2 + C\|\U\|_{|\al|}\|\N\|_{|\al|} + C\eps^2\|u\|_s\|\U\|_{|\al|}^2.
		\end{aligned}
	\end{equation*}
	Integrating the above over $[0,T]$, one obtains
	\begin{equation}\label{J_3}
		\begin{split}
			\int_0^T |J_3^\al| \mathrm{d}t   \leq&\,\,\, C\int_0^T\|\N(t)\|_{|\al|}^2\mathrm{d}t  + C(\delta+\eta)\mD_T.
		\end{split}
	\end{equation}
Especially, for $|\al|=0$, one has $J_3^\al=0$. For $J_4^\al$, similar to \eqref{It4naka}, one obtains by using \eqref{3.1} and  inequalities \eqref{Moser00}--\eqref{Mosertt} that
	\begin{align}\label{J_4}
			&\int_0^TJ_4^\al\mathrm{d}t+		2n_1\int_0^T\|\U_\al(t)\|^2\mathrm{d}t-2\int_0^T\left<n\F_\al,\U_\al\right>\mathrm{d}t\\
			\leq & C\int_0^T\|\bar{u}(t)\|_s\|\N(t)\|_{s-1}^2\mathrm{d}t+ C\eps^2\int_0^T\left<\pt\pa_x^\al\bar{u}+\pa_x^\al((u\cdot\D)\bar{u}),n\U_\al\right>\mathrm{d}t \nonumber\\
			&+ C\int_0^T\|\N(t)\|_{s}\|\N(t)\|_{|\al|}\|\U(t)\|_{|\al|}\mathrm{d}t+ C\int_0^T\left<\eps n\pa_x^\al(u\times B),\U_\al\right>\mathrm{d}t\nonumber\\
			\leq& C\eps^2+C(\delta+\eta)(\E_T+\mD_T).
	\end{align}
	Noticing the equivalence of $\left< D_0(\eps)A_{0}(n) \V_\al, \V_\al\right>$ and $\|\pa_x^\al \W\|^2$, integrating \eqref{startregular} over $[0,T]$ and combining \eqref{J_1}--\eqref{J_4} yield \eqref{NU-limit1}.
\end{proof}

\subsubsection{Applications of the stream function technique} We first find a proper stream function. We consider the error for mass equations \eqref{EM2} and \eqref{EM-limit} as our conservative equation:
\begin{equation}\label{masserr}
	\pt \N+\dive\left(n u-\bar{n} \bar{u}\right)=0.
\end{equation}
Notice from \eqref{EM2} and \eqref{EM-drift-diffusion} that
\begin{equation*}
	\dive \F=\dive E-\dive \bar{E}=-\N,
\end{equation*}
which implies that $\F$ is a natural candidate for our stream function. However, we need to recover $\pt\bar{E}$ due to its loss of information when $\eps\to 0$. From \eqref{EM-drift-diffusion} and the mass equation in \eqref{EM-limit}, one obtains
\begin{equation*}
	\dive \pt \bar{E}=-\pt \bar{n}=\dive(\bar{n} \bar{u}),
\end{equation*}
which implies that there exists a unique function $\bar{H}$ such that
\begin{equation}\label{H1}
\pt  \bar{E}-\bar{n} \bar{u}=\D \times \bar{H}, \quad \dive \bar{H}=0, \quad m_{\bar{H}}(t)=0.
\end{equation}
Based on this, one obtains that the stream function $\F$ associated with \eqref{masserr} satisfies
\[
\dive \F=-\N, \qquad \pt\F=\pt E-\pt\bar{E}=\left(n u-\bar{n} \bar{u}\right)+\frac{1}{\eps} \D \times \G-\D \times \bar{H}.
\]

In this subsection, we tend to use the stream function technique to obtain the global error estimates for $\N $ and $\F$.
First, we give the estimates for $\bar{H}$.

\begin{lemma}\label{lemma6.2}
	The solution $\bar{H}$ to \eqref{H1} satisfies
	\begin{equation}\label{stream}
		\bar{H} \in L^{\infty}\left(\mathbb{R}^{+} ; H^{s}\right),\quad \pt \bar{H} \in L^{2}\left(\mathbb{R}^{+} ; H^{s}\right).
	\end{equation}
\end{lemma}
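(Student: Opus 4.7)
The plan is to reduce the problem to classical elliptic estimates by exploiting the irrotationality of $\bar E$. Since $\bar E = \D\bar\phi$, we have $\D\times\bar E = 0$, hence $\D\times\pt\bar E = 0$. Taking curl of the first relation in \eqref{H1} and using the vector identity together with $\dive\bar H = 0$ yields
\begin{equation*}
-\Delta \bar H \;=\; \D\times(\pt\bar E - \bar n\bar u) \;=\; -\D\times(\bar n\bar u).
\end{equation*}
Since $m_{\bar H}(t)=0$ on $\T^3$, classical elliptic regularity for the Laplacian on the torus gives, for every integer $k\geq 0$,
\begin{equation*}
\|\bar H\|_{k+1} \;\leq\; C\|\D\times(\bar n\bar u)\|_{k-1} \;\leq\; C\|\bar n\bar u\|_{k}.
\end{equation*}

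First I would take $k=s-1$ and estimate $\|\bar n\bar u\|_{s-1}$ uniformly in $t$. Writing $\bar n = n_e+\bar N$, the Moser inequality \eqref{Moser11} together with the facts that $n_e\in W^{q-2,\infty}$ (Proposition \ref{extcequil}), $\bar N \in L^\infty(\R^+;H^s)$ and $\bar u\in L^\infty(\R^+;H^{s-1})$ (both from Lemma \ref{lemmaA}) yield
\begin{equation*}
\sup_{t\geq 0}\|\bar n\bar u\|_{s-1} \;\leq\; C\bigl(1+\|\bar N\|_{s-1}\bigr)\|\bar u\|_{s-1} \;\leq\; C.
\end{equation*}
This gives the desired bound $\bar H\in L^\infty(\R^+;H^s)$.

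Next I would differentiate the elliptic equation in $t$, noting $\dive\pt\bar H=0$ and $m_{\pt\bar H}(t)=0$, to obtain $-\Delta\pt\bar H = -\D\times\pt(\bar n\bar u)$, and therefore
\begin{equation*}
\|\pt\bar H\|_{s} \;\leq\; C\|\pt(\bar n\bar u)\|_{s-1}\;\leq\; C\bigl(\|\pt\bar N\|_{s-1}\|\bar u\|_{s-1}+(1+\|\bar N\|_{s-1})\|\pt\bar u\|_{s-1}\bigr).
\end{equation*}
Here I use that $\pt\bar n=\pt\bar N$ together with the Moser estimate \eqref{Moser11}. Integrating the square over $\R^+$ and invoking the bounds $\|\bar u\|_{s-1},\|\bar N\|_{s-1}\in L^\infty(\R^+)$ from \eqref{bar{rho}_es}--\eqref{bar{v}_es}, together with $\pt\bar N\in L^2(\R^+;H^{s-1})$ and $\pt\bar u\in L^2(\R^+;H^{s-1})$ from the same lemma, yields
\begin{equation*}
\int_0^{+\infty}\|\pt\bar H(t)\|_s^2\,\mathrm{d}t\;\leq\; C\delta,
\end{equation*}
which is the second claim.

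No essential obstacle is expected: the main subtlety is simply to observe that the irrotational part of $\pt\bar E$ drops out of the curl, turning the vector stream-function system into a genuine Poisson equation with divergence-free constraint and zero mean, after which the required estimates follow directly from Lemma \ref{lemmaA} and the product rules \eqref{Moser11}. The only care needed is that $\bar n$ itself is not in $H^{s-1}$ (because of $n_e$), which is handled by the splitting $\bar n = n_e + \bar N$ and the $W^{q-2,\infty}$ bound on $n_e$ from Proposition \ref{extcequil}.
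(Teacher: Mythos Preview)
Your proposal is correct and follows essentially the same route as the paper: derive the Poisson equation $\Delta\bar H=\D\times(\bar n\bar u)$ from \eqref{H1} via the curl and $\dive\bar H=0$, apply elliptic regularity (the paper writes the energy estimate explicitly, you cite it), and then differentiate in time. The only cosmetic difference is that the paper rewrites $\pt(\bar n\bar u)=-\dive(\bar n\bar u)\,\bar u+\bar n\,\pt\bar u$ using the limiting mass equation, whereas you use the product rule with $\pt\bar n=\pt\bar N$ and the splitting $\bar n=n_e+\bar N$; both lead to $\pt(\bar n\bar u)\in L^2(\R^+;H^{s-1})$ from Lemma~\ref{lemmaA}.
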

\begin{proof} Applying $\D\times$ in \eqref{H1}$_1$  yields
	\begin{equation}\label{deltaH}
		\Delta \bar{H}=\D \times(\bar{n} \bar{u}), \quad m_{\bar{H}}(t)=0.
	\end{equation}
	Consequently, for multi-indices $\al\in\mathbb{N}^d$ with $|\al|\leq s-1$, classical energy estimates together with the Young inequality yield
	\[
\|\pa_x^\al\D\bar{H}\|^2=\left<\pa_x^\al\bar{H},\pa_x^\al\Delta\bar{H}\right>=\left<\D\times\pa_x^\al\bar{H},\pa_x^\al(\bar{n}\bar{u})\right>\leq \frac{1}{2}\|\D\pa_x^\al\bar{H}\|^2+C\|\pa_x^\al(\bar{n}\bar{u})\|^2,
	\]
	which implies $\D \bar{H} \in L^{\infty}\left(\mathbb{R}^{+} ; H^{s-1}\right)$. In addition, it holds
	\begin{equation*}
		\pt(\bar{n} \bar{u})=\left(\pt \bar{n}\right) \bar{u}+\bar{n} \pt \bar{u}=-\dive(\bar{n} \bar{u}) \bar{u}+\bar{n} \pt \bar{u},
	\end{equation*}
	which yields $\pt(\bar{n} \bar{u}) \in L^{2}\left(\mathbb{R}^{+} ; H^{s-1}\right)$.
	Taking the time derivative to \eqref{deltaH} leads to
	\begin{equation*}
		\Delta \pt \bar{H}=\D \times \pt(\bar{n} \bar{u}).
	\end{equation*}
	The proof is complete by classical elliptic theories and the Poincar\'e inequality.
\end{proof}

The next lemma is a direct application of the stream function technique.

\begin{lemma}\label{lemma4.4}
	For all $\al \in \mathbb{N}^3 $ with $|\al| \le s-1$, it holds
	\begin{eqnarray}\label{FG-limit}
		\|\F_\al(T)\|^{2}+\|\G_\al(T)\|^{2}\leq C\eps^{2 p_{1}} + 2\int_0^T \left< \pa_{x}^{\al}(n u-\bar{n} \bar{u}),\F_\al\right> \mathrm{d}t   +C\eta \mD_T.
	\end{eqnarray}
	
\end{lemma}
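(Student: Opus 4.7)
The plan is to set up a Maxwell-type energy identity for the error fields $(\F,\G)$ and absorb the stream-function correction $\D\times\bar H$ by integrating by parts in time. Using the Maxwell block of \eqref{EM2} together with the identities $\pt\bar E=\bar n\bar u+\D\times\bar H$ from \eqref{H1}, $\D\times\bar E=0$, $\D\times B_e=0$ and $\pt B_e=0$, I would first derive the error equations
\begin{equation*}
\pt\F-\tfrac{1}{\eps}\D\times\G=(nu-\bar n\bar u)-\D\times\bar H,\qquad \pt\G+\tfrac{1}{\eps}\D\times\F=0,
\end{equation*}
supplemented by $\dive\F=-\N$ and $\dive\G=0$. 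Applying $\pa_x^\al$ for $|\al|\le s-1$, pairing the first equation with $\F_\al$ and the second with $\G_\al$ in $L^2$, the $\eps^{-1}$ curl terms cancel upon integration by parts in space. Time integration over $[0,T]$ yields the identity
\begin{equation*}
\|\F_\al(T)\|^2+\|\G_\al(T)\|^2=\|\F_\al(0)\|^2+\|\G_\al(0)\|^2+2\int_0^T\langle\pa_x^\al(nu-\bar n\bar u),\F_\al\rangle\,\mathrm{d}t-2\int_0^T\langle\D\times\pa_x^\al\bar H,\F_\al\rangle\,\mathrm{d}t,
\end{equation*}
in which the initial contribution is bounded by $C\eps^{2p}\le C\eps^{2p_1}$ via the well-prepared data hypothesis of Theorem~\ref{convergence_rate}, and the $(nu-\bar n\bar u)$-term is precisely the one kept on the right-hand side of the lemma.

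The main analytic content is to control the forcing term $-2\int_0^T\langle\D\times\pa_x^\al\bar H,\F_\al\rangle\,\mathrm{d}t$. Integrating by parts in space and then using the Maxwell relation $\D\times\F=\D\times E^\eps=-\eps\pt\G$ converts this into $2\eps\int_0^T\langle\pa_x^\al\bar H,\pt\G_\al\rangle\,\mathrm{d}t$, which carries the indispensable factor of $\eps$. A further integration by parts in time produces the boundary pair $2\eps[\langle\pa_x^\al\bar H,\G_\al\rangle]_0^T$ together with the bulk term $-2\eps\int_0^T\langle\pa_x^\al\pt\bar H,\G_\al\rangle\,\mathrm{d}t$. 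Since $m_{\bar H}(t)\equiv 0$, only the mean-zero part $\tilde\G$ of $\G$ on $\T^3$ contributes; combining $\dive\G=0$ with the Poincar\'e/Helmholtz inequality gives $\|\tilde\G\|_{s-1}\le C\|\D\times\G\|_{s-2}$, whence $\|\tilde\G\|_{L^2_tH^{s-1}}^2\le C\mD_T$ by the definition of $\mD_T$. Together with the estimate $\|\pt\bar H\|_{L^2(\R^+;H^{s-1})}\le C$ from Lemma~\ref{lemma6.2}, a Cauchy--Schwarz/Young argument bounds the bulk term by $C\eps^2/\eta+\eta\,\mD_T$; the boundary at $t=0$ is of size $\eps^{p+1}\le\eps^{2p_1}$ by $\|\G(0)\|_{s-1}\le C\eps^p$, and the boundary at $t=T$ is handled similarly via Young's inequality with the resulting $\eta\|\tilde\G(T)\|_{s-1}^2$ absorbed either into $\mD_T$ or, after summing over $\al$, into the left-hand side.

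The main obstacle is that $\D\times\bar H$ is an $O(1)$ source, so a direct electromagnetic energy estimate yields no rate in $\eps$. The saving device is the double integration by parts, first in space via $\D\times\F=-\eps\pt\G$ and then in time, which exchanges a spatial derivative on the slow object $\bar H$ for a time derivative carrying an explicit factor of $\eps$. The mean-zero property $m_{\bar H}=0$ is indispensable here: the spatial mean of $\G$ is conserved by Maxwell's equations and only $O(\eps^p)$ in size, but it is not time-integrable, and the Helmholtz projection hidden in the pairing with $\bar H$ eliminates it, leaving only the oscillatory part controlled by $\mD_T$. Selecting $\eta$ small then yields the desired estimate.
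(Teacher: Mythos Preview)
Your proposal is correct and follows essentially the same route as the paper: you derive the error Maxwell system, form the energy identity for $(\F_\al,\G_\al)$, move the curl onto $\F_\al$ via $\D\times\F=-\eps\pt\G$, and integrate by parts in time to trade the $O(1)$ forcing $\D\times\bar H$ for an $\eps$-weighted term involving $\pt\bar H$ and $\G_\al$, invoking Lemma~\ref{lemma6.2} for the regularity of $\bar H$.

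The one noteworthy difference is how the case $|\al|=0$ is handled for the bulk term $\eps\int_0^T\langle\pt\bar H,\G\rangle\,\mathrm{d}t$. The paper introduces a vector potential $\chi^\eps$ for $\G$ (with $\D\times\chi^\eps=\G$, $\dive\chi^\eps=0$, $m_{\chi^\eps}=0$) and shifts the curl onto $\pt\bar H$, so that $\|\chi^\eps\|\le C\|\D\times\G\|$ by elliptic estimates. You instead exploit $m_{\bar H}=0$ to replace $\G$ by its mean-zero part $\tilde\G$ and then invoke Poincar\'e together with the div--curl identity to get $\|\tilde\G\|\le C\|\D\times\G\|$. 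These are dual versions of the same idea and equally valid; your formulation has the minor advantage of treating all $|\al|\le s-1$ uniformly.

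One small imprecision: the boundary contribution $2\eps\langle\pa_x^\al\bar H(T),\G_\al(T)\rangle$ cannot be absorbed into $\mD_T$, since $\mD_T$ is a time integral. The clean fix (which the paper uses, and which your ``absorb into the left-hand side'' option also covers) is a Young inequality with a fixed constant, say $\tfrac14\|\G_\al(T)\|^2+C\eps^2\|\bar H(T)\|_{s-1}^2$, using $\bar H\in L^\infty(\mathbb{R}^+;H^s)$ from Lemma~\ref{lemma6.2}; this works for each $\al$ individually without any summation.
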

\begin{proof} For  multi-indices $\al\in\mathbb{N}^3$ with $|\al| \le s-1$, applying $\pa_x^\al$ to \eqref{H1}$_2$ yields
	\begin{equation*}
		\pa_{x}^{\al}(n u-\bar{n} \bar{u})=\pt \F_\al-\frac{1}{\eps} \D \times \G_{\al}+\D \times \pa_{x}^{\al} \bar{H}.
	\end{equation*}
	Taking the inner product of the above equation with $\F_\al$, integrating the resulting equation over $[0,T]$, combining Lemma \ref{lemma6.2} and using Young's inequality, one has
	\begin{eqnarray}
		&&\int_0^T \left< \pa_x^\al(n u-\bar{n} \bar{u}),\F_\al\right> \mathrm{d}t  \nonumber\\ &=&\int_0^T\left(\frac{1}{2} \frac{\mathrm{d}}{\mathrm{d}t}\|\F_\al(t)\|^{2}-\left<\D \times \F_\al, \frac{1}{\eps} \G_{\al}- \pa_{x}^{\al}\bar{H}\right>\right) \mathrm{d}t  \nonumber\\
		&=& \int_0^T \frac{1}{2} \frac{\mathrm{d}}{\mathrm{d}t}\left(\|\F_\al(t)\|^{2} + \|\G_{\al}(t)\|^2 \right)\mathrm{d}t -\int_0^T\left(\frac{\mathrm{d}}{\mathrm{d}t}\left<\eps \G_{\al}, \pa_{x}^{\al} \bar{H}\right>-\left<\eps \G_{\al}, \pt \pa_{x}^{\al} \bar{H}\right>\right) \mathrm{d}t\nonumber\\
		&\ge & \frac{1}{2}\|(\F_\al,\G_\al)(T)\|^{2}-C \eps^{2 p_{1}}-\int_0^T \frac{\mathrm{d}}{\mathrm{d}t}\left<\eps \G_{\al}, \pa_{x}^{\al} \bar{H}\right> \mathrm{d}t+\int_0^T\left<\eps \G_{\al}, \pt \pa_{x}^{\al} \bar{H}\right> \mathrm{d}t\nonumber\\
		&\geq& \frac{1}{4}\|(\F_\al,\G_\al)(T)\|^{2}-C \eps^{2 p_{1}}+\int_0^T\left<\eps \G_{\al}, \pt \pa_{x}^{\al} \bar{H}\right> \mathrm{d}t.\nonumber
	\end{eqnarray}
	Now it suffices to prove
	\begin{equation}\label{lemma6.6suffice}
		\left|\int_0^T\left<\eps \G_{\al}, \pt \pa_{x}^{\al} \bar{H}\right> \mathrm{d}t\right| \leq C \eps^{2}+\eta \int_0^T\|\D\times \G(t)\|_{s-2}^{2} \mathrm{d}t.
	\end{equation}
	Actually, for $1\leq|\al|\leq s-1$, \eqref{lemma6.6suffice} is obvious by noticing Lemma \ref{lemma6.2} and Theorem \ref{theorem1}. When $\al=0$, since $\dive\G=0$, there exists a unique function $\chi^\eps$ such that
	\begin{equation*}
			\D \times \chi^{\eps}=\G, \quad \dive \chi^{\eps}=0, \quad m_{\chi^\eps}(t)=0,
	\end{equation*}
	which implies the Poisson equation $\Delta\chi^\eps=-\D\times \G$. By classical elliptic theories, $\|\chi^\eps\|$ is bounded by $\|\D \times \G\|$, and consequently,
	\begin{equation*}
		\begin{aligned}
			\left<\eps \G,\pt  \bar{H}\right>
			&=\left<\eps\D\times\chi^\eps,\pt \bar{H}\right>
			\\&=\left<\eps\chi^\eps,\D\times\pt \bar{H}\right>\leq \eta\|\D\times \G\|^2+C\eps^2\|\D\times \pt  \bar{H}\|^2,
		\end{aligned}
	\end{equation*}
	which implies \eqref{lemma6.6suffice} for the case $\al=0$ and thus ends the proof.
\end{proof}

Combining \eqref{NU-limit1} and \eqref{FG-limit}, one has for $1\leq|\al|\leq s-1$,
\begin{eqnarray}\label{middleestim}
	&&\|(\N_\al, \eps\U_\al,\F_\al,\G_\al)(T)\|^2 + 2n_1\int_0^T\|\U_\al(t)\|^2 \mathrm{d}t+K^\al\nonumber\\
	&\leq&C\int_0^T\|\N(t)\|_{|\al|}^2 \mathrm{d}t+C\eps^{2p_1}+C(\delta+\eta)(\E_T+\mD_T).
\end{eqnarray}
Especially, for $|\al|=0$, one obtains from \eqref{zeror} and \eqref{FG-limit} that
\begin{eqnarray}\label{001}
	&&\|(\N, \eps\U,\F,\G)(T)\|^2 + 2n_1\int_0^T\|\U(t)\|^2 \mathrm{d}t+K^0\nonumber\\
	&\leq&C\eps^{2p_1}+C(\delta+\eta)(\E_T+\mD_T).
\end{eqnarray}
Here, $K^\al$ is defined as
\[
K^\al:=-2\int_0^T\left<F_\al,\pa_x^\al(nu-\bar{n}\bar{u})-n\U_\al\right>\mathrm{d}t,
\]
of which the estimate is given in the following lemma.
\begin{lemma}\label{lemma4.5} For $1\leq |\al|\leq s-1$, one has
	\begin{eqnarray}\label{middle11}
		|K^\al|\leq C(\delta+\eta)\mD_T+C\int_0^T\|\F(t)\|_{|\al|-1}^2\mathrm{d}t.\,\,\,\,\,\,\,\,\,
	\end{eqnarray}
Moreover, for $|\al|=0$, one obtains
\begin{eqnarray}\label{002}
\left|K^0\right|\leq C(\delta+\eta)\mD_T.
\end{eqnarray}
\end{lemma}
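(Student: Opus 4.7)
My plan is to split the integrand of $K^\al$ into three pieces that either carry an explicit smallness factor or can be made dissipative through integration by parts. Since $nu-\bar n\bar u = n\U+\N\bar u$, I first write
\[
\pa_x^\al(nu-\bar n\bar u)-n\U_\al = \bigl[\pa_x^\al(n\U)-n\U_\al\bigr] + \pa_x^\al(\N\bar u),
\]
and then use $n=n_e+N$ to decompose the commutator further into an $N$-piece and an $n_e$-piece. This reduces the proof to three separate bounds on pairings against $\F_\al$.

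The $N$-piece is tame: classical Moser commutator estimates give $\|\pa_x^\al(N\U)-N\U_\al\|\le C\|N\|_s\|\U\|_{s-1}$, which after pairing with $\F_\al$ and integrating in time contributes at most $C\delta\,\mD_T$ (using $\|N\|_s\le C\delta$ from Theorem \ref{theorem1}). The $\N\bar u$ piece uses the algebra property of $H^{s-1}$, valid since $s\ge 3$, namely $\|\pa_x^\al(\N\bar u)\|\le C\|\N\|_{s-1}\|\bar u\|_{s-1}$, together with $\sup_t\|\bar u(t)\|_{s-1}\le C\sqrt\delta$ from \eqref{bar{v}_es}; a Young inequality then places this piece inside $C(\delta+\eta)\mD_T$ as well. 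In particular, when $|\al|=0$ the commutator $[\pa_x^\al,n]\U$ vanishes identically, leaving only the $\N\bar u$ contribution, which immediately yields \eqref{002}.

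The main obstacle is the $n_e$-piece, because $\D n_e$ is bounded but not small --- precisely the difficulty already isolated in \eqref{treatF}. The remedy is the integration-by-parts trick used there: since $|\al|\ge 1$, I pick an index $j$ with $\al_j\ge 1$, write $\al=e_j+\al'$ so that $\F_\al=\pa_{x_j}\F_{\al'}$, and integrate by parts,
\[
\langle\F_\al,\pa_x^\al(n_e\U)-n_e\U_\al\rangle = -\langle\F_{\al'},\pa_{x_j}[\pa_x^\al(n_e\U)-n_e\U_\al]\rangle.
\]
Because the commutator $\pa_x^\al(n_e\U)-n_e\U_\al$ only contains derivatives of $\U$ of order at most $|\al|-1$, applying one extra $\pa_{x_j}$ raises this to order at most $|\al|$; since $n_e\in W^{s+1,\infty}$ by Proposition \ref{extcequil} (as $q\ge s+3$), the resulting $L^2$-norm is bounded by $C\|\U\|_{|\al|}$. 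Cauchy--Schwarz and Young's inequality with parameter $\eta$ then produce $\eta\|\U\|_{|\al|}^2+C\|\F\|_{|\al|-1}^2$, and integrating in time yields a contribution of the form $C\eta\,\mD_T+C\int_0^T\|\F(t)\|_{|\al|-1}^2\,\mathrm dt$. Combined with the two tame estimates above this closes \eqref{middle11}.
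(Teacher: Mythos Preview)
Your proof is correct and follows essentially the same strategy as the paper. The only cosmetic difference is in the decomposition of the ``large'' factor: you write $n=n_e+N$ so that the non-small commutator piece carries $n_e$, whereas the paper writes $n=\bar n+\N$ so that the non-small piece carries $\bar n$; in both cases the obstruction is removed by the integration-by-parts trick of \eqref{treatF}, and the remaining two pieces are controlled by the smallness of $N$ (resp.\ $\N$) and of $\bar u$ exactly as you do.
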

\begin{proof} Notice the explicit expression for $K^\al$, one obtains
	\begin{eqnarray}\label{middle0}
		K^\al &=&\int_0^T\left<\pa_{x}^{\al}(\N\U)-\N\U_\al, \F_\al\right>\mathrm{d}t+\int_0^T\left<\pa_{x}^{\al}(\bar{u}\N), \F_\al\right>\mathrm{d}t\nonumber\\
		&&+\int_0^T\left<\pa_{x}^{\al}(\bar{n}\U)-\bar{n}\U_\al, \F_\al\right>\mathrm{d}t.
	\end{eqnarray}
	Since $\N$ and $\bar{u}$ are small in $L^\infty(\mathbb{R}^+;H^{s-1})$, one has
	\begin{eqnarray}
		&&\left|\int_0^T\left<\pa_{x}^{\al}(\N\U)-\N\U_\al, \F_\al\right>\mathrm{d}t+\int_0^T\left<\pa_{x}^{\al}(\bar{u}\N), \F_\al\right>\mathrm{d}t\right|\leq C\delta\mD_T.
	\end{eqnarray}
	As to the last term on the right hand side of \eqref{middle0}, similar to the treatment in \eqref{treatF} and noticing that $\bar{n}\in L^\infty(\R^+;H^{s+1})$, one has
	\begin{eqnarray}
		\left|\int_0^T\left<\pa_{x}^{\al}(\bar{n}\U)-\bar{n}\U_\al, \F_\al\right>\mathrm{d}t\right|\leq \eta\int_0^T\|\V(t)\|_{s-1}^2\mathrm{d}t+C\int_0^T\|\F(t)\|_{|\al|-1}^2\mathrm{d}t,\nonumber
	\end{eqnarray}
	which yields \eqref{middle11}. For the case of $|\al|=0$, direct calculations give \eqref{002}.
\end{proof}

The next lemma gives dissipative estimates for $\N$ and $\F$.

\begin{lemma}\label{lemma4.6}
	For all $\al \in \mathbb{N}^3 $ with $1\le |\al| \le s$, it holds
	\begin{equation}\label{mathcalNF}
		\int_0^T\left(\|\F(t)\|_{|\al|-1}^2 + \|\N(t)\|_{|\al|}^2 \right)  \mathrm{d}t
		\le C \int_0^T\|\V(t)\|_{|\al|-1}^2 \mathrm{d}t+ C\eps^2.
	\end{equation}
	Especially, it holds
	\begin{eqnarray}\label{base00}
	    \int_0^T\|(\N,\F)(t)\|^2\mathrm{d}t&\leq& C\eps^2+C\int_0^T\|\U(t)\|^2\mathrm{d}t.
	\end{eqnarray}
\end{lemma}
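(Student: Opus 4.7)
The strategy parallels the proof of Lemma \ref{Lemma3.3}, but now applied to the error system \eqref{global-limit}. The starting point is to isolate the ``elliptic-type'' combination by rewriting \eqref{global-limit}$_2$ as
\begin{equation*}
h'(n)\D\N + \F = -\U - \eps^2\pt\U - \eps^2(u\cdot\D)\U - \eps u\times B - r(\bar n,\N) - \N\D h'(\bar n) - \eps^2\pt\bar u - \eps^2(u\cdot\D)\bar u.
\end{equation*}
For $1\le|\al|\le s$ I fix a multi-index $\beta$ with $|\beta|\le|\al|-1$, apply $\pa_x^\beta$, and take the $L^2$ inner product with $\D\N_\beta$. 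Using $h'(n)\ge h_1>0$ from \eqref{3.1} and the Poisson relation $\dive\F=-\N$ (so that $\langle\F_\beta,\D\N_\beta\rangle=-\langle\dive\F_\beta,\N_\beta\rangle=\|\N_\beta\|^2$), the left-hand side is bounded below by $h_1\|\D\N_\beta\|^2+\|\N_\beta\|^2$ up to a commutator $[\pa_x^\beta,h'(n)]\D\N$ that is handled via the Moser-type inequality \eqref{Moser00}.

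The right-hand side is then bounded term by term. The linear-in-$\U$ piece contributes $\|\U\|_{|\al|-1}^2$ after Young's inequality. Each $\eps^2$-inertial term integrated over $[0,T]$ produces an $O(\eps^4)$ contribution: for instance, $\eps^2\pt\U=\eps^2\pt u-\eps^2\pt\bar u$ is controlled by $\int_0^T\|\pt u\|_{s-1}^2\,\mathrm{d}t\le\int_0^T\norm{u}_s^2\,\mathrm{d}t\le C$ from Theorem \ref{theorem1} (recalling that $\norm{\cdot}_s$ includes time derivatives) combined with the bound $\pt\bar u\in L^2(\R^+;H^{s-1})$ from Lemma \ref{lemmaA}; the Lorentz term $\eps u\times B$ is similarly $O(\eps^2)$ after $t$-integration since $\int_0^T\|u\|_s^2\,\mathrm{d}t\le C$ and $B\in L^\infty(H^s)$. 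The remainders $r(\bar n,\N)=O(\N^2)$ and $\N\D h'(\bar n)$ are absorbed into $C\int\|\N\|_{|\al|-1}^2\,\mathrm{d}t$ using the $\delta$-smallness supplied by Theorem \ref{theorem1}. Summing over $|\beta|\le|\al|-1$ yields the dissipative estimate for $\|\N\|_{|\al|}$; the dissipation for $\|\F\|_{|\al|-1}$ then follows at once by reading the momentum equation as $\F=-h'(n)\D\N+(\text{remainders})$ and taking $L^2$-norms of its $\pa_x^\beta$-derivative, inserting the $\D\N_\beta$ bound just obtained.

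For the base case $|\al|=0$ in \eqref{base00}, one cannot integrate by parts on $\D\N$; instead, I pair the momentum equation with $\F$ itself. Writing $h(n)-h(\bar n)=\tilde h(\bar n,\N)\N$ with $\tilde h(\bar n,\N):=\int_0^1 h'(\bar n+\theta\N)\,\mathrm{d}\theta\ge h_1$, the crucial term becomes
\begin{equation*}
-\langle\D[h(n)-h(\bar n)],\F\rangle=\langle\tilde h(\bar n,\N),-\dive\F\rangle=\langle\tilde h(\bar n,\N)\N,\N\rangle\ge h_1\|\N\|^2,
\end{equation*}
after which Young absorption yields $\tfrac12\|\F\|^2+h_1\|\N\|^2\le C\|\U\|^2+(\eps^2\text{-terms})$, and time integration produces \eqref{base00}. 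The \emph{main obstacle} is that, unlike Lemma \ref{Lemma3.3}, the right-hand side here must be kept in the clean form $C\int\|\V\|_{|\al|-1}^2\,\mathrm{d}t+C\eps^2$ without any $\cubic$-style remainders, so every cubic and every $\eps^2$-inertial term has to be fit cleanly into these two buckets; this forces a careful combined use of the $\delta$-smallness from the uniform estimates of Theorem \ref{theorem1} (to absorb cubic quantities) and of the global $L^2_t$ bounds on $\pt u$ and $\pt\bar u$ (to extract the $\eps^2$ power rather than a worse one).
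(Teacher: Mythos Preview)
Your proof is correct and follows essentially the same approach as the paper: both test the momentum--error equation against $\F_\beta$ (to extract $\|\F_\beta\|^2+h_1\|\N_\beta\|^2$ via $\dive\F=-\N$) and against $\D\N_\beta$ (to extract $h_1\|\D\N_\beta\|^2$), bounding the $\eps^2$-inertial and Lorentz terms by $C\eps^2$ through the uniform $L^2_t$ bounds of Theorem~\ref{theorem1} and Lemma~\ref{lemmaA}. The only cosmetic differences are that the paper starts from the cleaner form \eqref{start11} (keeping $\eps^2\pt u$ rather than splitting into $\eps^2\pt\U+\eps^2\pt\bar u$) and performs the $\F_\beta$ pairing before the $\D\N_\beta$ pairing, whereas you reverse that order; neither change affects the argument.
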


\begin{proof}
	Subtracting the equation for $\bar{u}$ in \eqref{baru} from \eqref{EM2}$_2$ leads to
	\begin{equation}\label{start11}
		\eps^2\left(\pt u + (u \cdot \D u)\right)+\left( \D h(n) -\D h(\bar{n}) \right)=-\F -\eps u \times B -\U.
	\end{equation}
	Let multi-indices $\al,\beta\in\mathbb{N}^3$ with $1\leq |\al|\leq s$ and $|\beta|\leq |\al|-1$. Applying $\F_\beta\pa_{x}^\beta$ to the above equation and integrating over $[0,T]$ yield
	\begin{eqnarray}\label{mathcalNF_eq}
		&&\int_0^T\|\F_\beta(t)\|^2\mathrm{d}t+\int_0^T\left<h^\prime(\hat{n})\N_\beta, \N_\beta\right>\mathrm{d}t\nonumber\\
		&\leq& -\int_0^T\left<F_\beta,\eps^2\pt{u}_\beta+\eps^2\pa_{x}^\beta((u\cdot \D) u)+\eps \pa_{x}^\beta(u\times B)\right>\mathrm{d}t-\int_0^T\left< \U_\beta,\F_\beta \right>\mathrm{d}t\nonumber\\
		&&+\int_0^T\left<\pa_x^\beta(h^\prime(\hat{n})\N)-h^\prime(\hat{n})\N_\beta), \N_\beta\right>\mathrm{d}t
	\end{eqnarray}
	where $\hat{n}$ is between $n$ and $\bar{n}$. One obtains that
	\[
	\int_0^T\left<h^\prime(\hat{n})\N_\beta, \N_\beta\right>\mathrm{d}t\geq h_1\int_0^T\|\N_\beta(t)\|^2\mathrm{d}t.
	\]
	Direct calculation shows that
	\begin{eqnarray}
	&&\int_0^T\left<\F_\beta,\eps^2\pt{u}_\beta+\eps^2\pa_{x}^\beta((u\cdot \D) u)+\eps \pa_{x}^\beta(u\times B)\right>\mathrm{d}t+\int_0^T\left< \U_\beta,\F_\beta \right>\mathrm{d}t\nonumber\\
	&\leq& \dfrac{1}{3}\int_0^T \|\F_\beta(t)\|^2\mathrm{d}t+C\eps^2+C\int_0^T\|\U_\beta(t)\|^2 \mathrm{d}t.\nonumber
	\end{eqnarray}
	In addition, by the Moser-type calculus inequalities, for $|\beta|\geq 1$,
	\[
	\left|\int_0^T\left<\pa_x^\beta(h^\prime(\hat{n})\N)-h^\prime(\hat{n})\N_\beta), \N_\beta\right>\mathrm{d}t\right|\leq \dfrac{h_1}{2}\int_0^T\|\N_\beta(t)\|^2\mathrm{d}t+C\int_0^T\|\N(t)\|_{|\beta|}^2\mathrm{d}t,
	\]
	and the above estimate has no need to be carried out for $|\beta|=0$. Adding \eqref{mathcalNF_eq} for all $\beta$ up to $ |\al|-1$ and combining all these estimates above yield that
	\begin{eqnarray}\label{nakar1}
		\int_0^T\|(\N,\F)(t)\|_{|\al|-1}^2\mathrm{d}t&\leq& C\eps^2+C\int_0^T \|\V(t)\|_{|\al|-1}^2\mathrm{d}t.
	\end{eqnarray}
Especially, when $|\al|=0$, one obtains \eqref{base00}.

	Next, for $|\beta|\leq |\al|-1$, similarly as Lemma \ref{Lemma3.3}, applying $\D\N_\beta\pa_x^\beta$ to \eqref{start11} and integrating the resulting equation over $[0,T]$ yield
	\begin{eqnarray}
		&&\int_0^T\left<h^\prime(\hat{n})\D\N_\beta, \D\N_\beta\right>\mathrm{d}t+\int_0^T\left<F_\beta, \D \N_\beta\right>\mathrm{d}t\nonumber\\
		&\leq& -\int_0^T\left<\D\N_\beta,\eps^2\pt{u}_\beta+\eps^2\pa_{x}^\beta((u\cdot \D) u)+\eps \pa_{x}^\beta(u\times B)\right>\mathrm{d}t-\int_0^T\left< \U_\beta,\D\N_\beta \right>\mathrm{d}t\nonumber\\
		&&+\int_0^T\left<\pa_x^\beta\D(h^\prime(\hat{n})\N)-h^\prime(\hat{n})\D\N_\beta), \D\N_\beta\right>\mathrm{d}t\nonumber\\
		&\leq& \dfrac{h_1}{2}\int_0^T\|\D\N_\beta(t)\|^2\mathrm{d}t+ C\int_0^T \|\U_\beta(t)\|^2\mathrm{d}t+C\int_0^T\|\N(t)\|_{|\beta|}^2\mathrm{d}t+C\eps^2,\nonumber
	\end{eqnarray}
	where one has
	\[
	\left<F_\beta, \D \N_\beta\right>=\int_0^T\|\N_\beta(t)\|^2\mathrm{d}t,\quad \left<h^\prime(\hat{n})\D\N_\beta, \D\N_\beta\right> \geq h_1\|\D\N_\beta(t)\|^2.
	\]
	Consequently, one obtains that
	\[
	\int_0^T\|\D\N_\beta(t)\|^2\mathrm{d}t \leq C\eps^2+C\int_0^T\|\N(t)\|_{|\beta|}^2\mathrm{d}t+C\int_0^T \|\U_\beta(t)\|^2\mathrm{d}t.
	\]
	Adding the above inequality for all $|\beta|\leq |\al|-1$ yields
	\begin{equation*}
		\int_0^T\|\N(t)\|_{|\al|}^2\mathrm{d}t\leq C\eps^2+C\int_0^T \|\N(t)\|_{|\al|-1}^2\mathrm{d}t+C\int_0^T\|\U(t)\|_{|\al|-1}^2 \mathrm{d}t,
	\end{equation*}
	in which further combining \eqref{nakar1} and \eqref{base00} yields \eqref{mathcalNF}.
\end{proof}

\subsection*{Proof of Theorem \ref{convergence_rate}}
Substituting \eqref{middle11} and \eqref{mathcalNF} into \eqref{middleestim} for $1\leq |\al|\leq s$, one obtains
\begin{eqnarray}\label{inductionstart}
	&&\|(\N_\al, \eps\U_\al,\F_\al,\G_\al)(T)\|^2 + \int_0^T\|\U_\al(t)\|^2 \mathrm{d}t+\int_0^T\left(\|\F(t)\|_{|\al|-1}^2 + \|\N(t)\|_{|\al|}^2 \right)  \mathrm{d}t \nonumber\\
	&\leq&C\eps^{2p_1}+C(\delta+\eta)(\E_T+\mD_T)+C\int_0^T\|\V(t)\|_{|\al|-1}^2\text{d}t.
\end{eqnarray}
Especially, when $|\al|=0$, combining \eqref{001}, \eqref{002} and \eqref{base00} yields
\begin{equation}\label{inductionbase}
  \|(\N, \eps\U,\F,\G)(T)\|^2 + \int_0^T\|(\N,\U,\F)(t)\|^2 \mathrm{d}t	\leq C\eps^{2p_1}+C(\delta+\eta)(\E_T+\mD_T).
\end{equation}
Applying the induction argument on $|\al|$ in \eqref{inductionstart} and combining \eqref{inductionbase}, one obtains
\begin{eqnarray}
	&&\|(\N, \eps\U,\F,\G)(T)\|_{|\al|}^2 + \int_0^T\left(\|\U(t)\|_{|\al|}^2+\|\F(t)\|_{|\al|-1}^2 + \|\N(t)\|_{|\al|}^2 \right)  \mathrm{d}t \nonumber\\
	&\leq&C\eps^{2p_1}+C(\delta+\eta)(\E_T+\mD_T). \nonumber
\end{eqnarray}
Adding the above for all $ |\al|\leq s-1$, combining \eqref{inductionbase} and noticing \eqref{mathcalNF} for the case $|\al|=s$, one obtains
\begin{eqnarray}\label{finbefore}
	&&\|(\N, \eps\U,\F,\G)(T)\|_{s-1}^2 +\int_0^T\left(\|\U(t)\|_{s-1}^2+\|\F(t)\|_{s-2}^2 + \|\N(t)\|_{s-1}^2 \right)  \mathrm{d}t \nonumber\\
	&\leq&C\eps^{2p_1}+C(\delta+\eta)(\E_T+\mD_T).
\end{eqnarray}
In addition, the error for the Maxwell equations are of the form
\[
\D\times \G=\eps\pt F-\eps(nu), \quad \eps\pt \G+\D\times \F=0.
\]
By using \eqref{2-1} and \eqref{bar{E}_es}, one obtains directly  that
\[
\int_0^T\|\D\times \G(t)\|_{s-2}^2\mathrm{d}t\leq C\eps^2.
\]
Consequently, estimate \eqref{finbefore} implies that there exists a constant $c_1>0$ such that
\[
\E_T+\mD_T\leq C\eps^{2p_1}+c_1(\delta+\eta)(\E_T+\mD_T).
\]
Then one may choose $\delta$ and $\eta$ sufficiently small such that
\begin{equation}\label{eta}
    c_1(\delta+\eta)\leq \frac{1}{2},
\end{equation}
and thus the proof is complete. \hfill $\square$

\section{Applications for Euler-Poisson system}
In this section, we apply our methods to Euler-Poisson system. We first give the global convergence in zero-relaxation limit of the system \eqref{EPmain}, and then deduce the global error estimates. In the following, we drop the superscript of $\eps$. For simplicity, we still adopt the similar notations
\begin{align*}
    N=&\,\,n-n_e,\quad \Phi=\phi-\phi_e,\quad  F=-\nabla \Phi,\\
    \N=&\,\,n-\bar{n}, \quad \U=u-\bar{u},\quad \F= \nabla\phi-\nabla\bar{\phi}.
\end{align*}

\subsection*{Proof of Theorem \ref{theorem5.1}}
We rewrite the Euler-Poisson system \eqref{EPmain} as
\begin{equation*}
	\left\{\begin{array}{l}
		\partial_{t} N+\operatorname{div}((N+\bar{n}) u)=0, \\
		\eps^2\partial_{t} u+\eps^2(u \cdot \nabla) u+\nabla(h(N+\bar{n})-h(\bar{n}))+u=-\nabla \Phi, \\
		\Delta \Phi=-N,
	\end{array}\right.
\end{equation*}
in which the Euler equations are the special case of \eqref{EM30} with $B = 0$, while the Maxwell equations in \eqref{EM30} are replaced by $\Delta \Phi=-N$.

Now we want to establish an analogous energy estimate in the present case with $G = 0$. By checking all the steps, we see that the Maxwell equations are concerned only in the proof of Lemma \ref{lemma3.1} and Lemma \ref{lemma3.2}. Essentially, one has to deal with the quadratic term $2\left< \pt^k \pa_x^\al(nu),\pt^k F_\al\right>$ with $k+|\al|\le s$, appeared in the proof due to the Poisson equations. In our case, this term can be estimated as follows. Since
\begin{equation*}
	F=-\nabla \Phi, \quad \partial_{t} N=-\operatorname{div}(n u) \quad \text { and } \quad \Delta \Phi=-N,
\end{equation*}
one has by energy estimates,
\begin{align*}
	\left< \pt^k \pa_x^\al(nu),\pt^k F_\al\right>=&-\left< \pt^k \pa_x^\al(nu),\pt^k\pa_x^\al \nabla\Phi\right>
	=\left<\pt^k\pa_x^\al\dive{(nu)},\pt^k\pa_x^\al\Phi \right>\\
	=&-\left<\pt^{k+1}N_\al,\pt^k\pa_x^\al\Phi\right>
	=\left<\pt^{k+1}\Delta \pa_x^\al \Phi,\pt^k\pa_x^\al\Phi\right>=-\frac{\mathrm{d}}{\mathrm{d}t}\frac{1}{2}\|\pt^k F_\al\|^2.
\end{align*}
This shows the validity of all the steps before \eqref{middle4} and \eqref{N-s}, which imply \eqref{5.1.1}. The initial data of $F$ can be obtained through the Poisson equation
\[
\Delta \Phi(0,x)=-(n_0^\eps-n_e), \quad m_{\Phi}(t)=0.
\]
Similar to Theorem \ref{theorem2}, one obtains Theorem \ref{theorem5.1}. \hfill $\square$

\subsection*{Proof of Theorem \ref{theorem5.2}}
From \eqref{EPmain} and \eqref{EM-drift-diffusion}, one has
\begin{equation*}
	\dive{(\pt\F)}=-\pt{\N}=\dive{(nu-\bar{n}\bar{u})}.
\end{equation*}
Consequently, there exists a function $M$ such that the stream function $\F$ satisfies
\begin{equation}\label{EPstream}
	\pt{\F}=(nu-\bar{n}\bar{u})+\nabla\times M.
\end{equation}
Due to the similar structure of the Euler equations, by checking all steps, we find that the Maxwell equations are concerned only in Lemma \ref{lemma4.4}. More precisely, we just need to estimate the quadratic term $\left<\pa_x^\al(nu-\bar{n}\bar{u}),\F_\al\right>$ for $|\al|\leq s-1$. Indeed, by \eqref{EPstream} and the fact that $\F$ is rotation free, one obtains
\[
\left<\partial_x^\al(nu-\bar{n}\bar{u}),\F_\al \right>=\left<\partial_x^\al(\pt\F-\nabla\times M),\F_\al \right>=\frac{1}{2}\frac{\mathrm{d}}{\mathrm{d}t}\|\F_\al\|^2.
\]
The initial data of $\F$ can be obtained through the Poisson equation
\[
\Delta (\phi(0,x)-\bar{\phi}(0,x))=-(n_0^\eps-\bar{n}(0,x)), \quad m_{\Phi}(t)=0, \,\,\text{for}\,\,x\in\T^3.
\]
This shows the validity of all steps before \eqref{middle11} and thus one obtains \eqref{EPerror}. \hfill $\square$

\vspace{5mm}

\noindent{\bf Acknowledgments:} The research of this work was supported in part by the National Natural Science Foundation of China under grants 11831011 and 12161141004. This work was also partially supported by  the Fundamental Research Funds for the Central Universities and Shanghai Frontiers Science Center of Modern Analysis.

\vspace{5mm}

\noindent {\bf Conflict of Interest:} The authors declare that they have no conflict of interest.

\vspace{5mm}
\noindent {\bf Data availability:} Data sharing is  not applicable to this article as no data sets were generated or analysed during the current study.

\begin{bibdiv}
	\begin{biblist}
		
		\bib{Ali2000}{article}{
			author={Al\`\i, G.},
			author={Bini, D.},
			author={Rionero, S.},
			title={Global existence and relaxation limit for smooth solutions to the
				{E}uler-{P}oisson model for semiconductors},
			date={2000},
			ISSN={0036-1410},
			journal={SIAM J. Math. Anal.},
			volume={32},
			number={3},
			pages={572\ndash 587},
			url={https://doi.org/10.1137/S0036141099355174},
	
		}
		
		\bib{Besse2004}{article}{
			author={Besse, Christophe},
			author={Degond, Pierre},
			author={Deluzet, Fabrice},
			author={Claudel, Jean},
			author={Gallice, G\'{e}rard},
			author={Tessieras, Christian},
			title={A model hierarchy for ionospheric plasma modeling},
			date={2004},
			ISSN={0218-2025},
			journal={Math. Models Methods Appl. Sci.},
			volume={14},
			number={3},
			pages={393\ndash 415},
			
		}
		
		\bib{Chen1984}{book}{
			author={Chen, Francis.},
			title={Introduction to {P}lasma {P}hysics and {C}ontrolled {F}usion},
			publisher={PlenumPress},
			date={1984},
			volume={1},
		}
		
		\bib{Chen2000}{article}{
			author={Chen, Gui-Qiang},
			author={Jerome, Joseph~W.},
			author={Wang, Dehua},
			title={Compressible {E}uler-{M}axwell equations},
			date={2000},
			ISSN={0041-1450},
			journal={Transp. Theory Stat. Phys.},
			volume={29},
			pages={311\ndash 331},
		}
		
		\bib{Degond2012}{article}{
			author={Degond, P.},
			author={Deluzet, F.},
			author={Savelief, D.},
			title={Numerical approximation of the {E}uler--{M}axwell model in the
				quasineutral limit},
			date={2012feb},
			journal={J. Comput. Phys.},
			volume={231},
			number={4},
			pages={1917\ndash 1946},
		}
		
		\bib{Fang2007}{article}{
			author={Fang, Daoyuan},
			author={Xu, Jiang},
			author={Zhang, Ting},
			title={Global exponential stability of classical solutions to the
				hydrodynamic model for semiconductors},
			date={2007},
			ISSN={0218-2025},
			journal={Math. Models Methods Appl. Sci.},
			volume={17},
			number={10},
			pages={1507\ndash 1530},
		
		}
		
		\bib{Feng2015}{article}{
			author={Feng, Yue-Hong},
			author={Peng, Yue-Jun},
			author={Wang, Shu},
			title={Stability of non-constant equilibrium solutions for two-fluid
				{E}uler-{M}axwell systems},
			date={2015},
			ISSN={1468-1218},
			journal={Nonlinear Anal. Real World Appl.},
			volume={26},
			pages={372\ndash 390},
		
		}
		
		\bib{Germain2014}{article}{
			author={Germain, Pierre},
			author={Masmoudi, Nader},
			title={Global existence for the {E}uler--{M}axwell system},
			date={2014},
			ISSN={0012-9593},
			journal={Ann. Sci. \'Ec. Norm. Sup\'er. (4)},
			volume={47},
			number={3},
			pages={469\ndash 503},
		
		}
		
		\bib{Germain2013}{article}{
			author={Germain, Pierre},
			author={Masmoudi, Nader},
			author={Pausader, Benoit},
			title={Nonneutral global solutions for the electron {E}uler--{P}oisson
				system in three dimensions},
			date={2013},
			ISSN={0036-1410},
			journal={SIAM J. Math. Anal.},
			volume={45},
			number={1},
			pages={267\ndash 278},
		
		}
		
		\bib{Gilbarg1983}{book}{
			author={Gilbarg, David},
			author={Trudinger, Neil~S.},
			title={Elliptic partial differential equations of second order},
			edition={Second},
			publisher={Springer-Verlag, Berlin},
			date={1983},
			volume={224},
			ISBN={3-540-13025-X},
		
		}
		
		\bib{Goudon2013}{article}{
			author={Goudon, Thierry},
			author={Lin, Chunjin},
			title={Analysis of the {$M1$} model: well-posedness and diffusion
				asymptotics},
			date={2013},
			ISSN={0022-247X},
			journal={J. Math. Anal. Appl.},
			volume={402},
			number={2},
			pages={579\ndash 593},
			
		}
		
		\bib{Guo1998}{article}{
			author={Guo, Yan},
			title={Smooth irrotational flows in the large to the {E}uler--{P}oisson
				system in $\mathbb{{R}}^{3+1}$},
			date={1998},
			ISSN={0010-3616},
			journal={Comm. Math. Phys.},
			volume={195},
			number={2},
			pages={249\ndash 265},
		
		}
		
		\bib{Guo2016}{article}{
			author={Guo, Yan},
			author={Ionescu, Alexandru~D.},
			author={Pausader, Benoit},
			title={Global solutions of the {E}uler--{M}axwell two--fluid system in
				3{D}},
			date={2016},
			ISSN={0003-486X},
			journal={Ann. of Math.},
			volume={183},
			number={2},
			pages={377\ndash 498},
		
		}
		
		\bib{Guo2005}{article}{
			author={Guo, Yan},
			author={Strauss, Walter},
			title={Stability of semiconductor states with insulating and contact
				boundary conditions},
			date={2005},
			journal={Arch. Ration. Mech. Anal.},
			volume={179},
			number={1},
			pages={1\ndash 30},
		}
		
		\bib{Hajjej2012}{article}{
			author={Hajjej, Mohamed-Lasmer},
			author={Peng, Yue-Jun},
			title={Initial layers and zero--relaxation limits of {E}uler--{M}axwell
				equations},
			date={2012},
			ISSN={0022-0396},
			journal={J. Differential Equations},
			volume={252},
			number={2},
			pages={1441\ndash 1465},
			
		}
		
		\bib{Hsiao2003}{article}{
			author={Hsiao, Ling},
			author={Markowich, Peter~A.},
			author={Wang, Shu},
			title={The asymptotic behavior of globally smooth solutions of the
				multidimensional isentropic hydrodynamic model for semiconductors},
			date={2003},
			ISSN={0022-0396},
			journal={J. Differential Equations},
			volume={192},
			number={1},
			pages={111\ndash 133},
		
		}
		
		\bib{Huang2011}{article}{
			author={Huang, Feimin},
			author={Mei, Ming},
			author={Wang, Yong},
			title={Large time behavior of solutions to {$n$}-dimensional bipolar
				hydrodynamic models for semiconductors},
			date={2011},
			ISSN={0036-1410},
			journal={SIAM J. Math. Anal.},
			volume={43},
			number={4},
			pages={1595\ndash 1630},
		
		}
		
		\bib{Jin1995}{article}{
			author={Jin, Shi},
			author={Xin, ZhouPing},
			title={The relaxation schemes for systems of conservation laws in
				arbitrary space dimensions},
			date={1995},
			ISSN={0010-3640},
			journal={Commun. Pure Appl. Math.},
			volume={48},
			number={3},
			pages={235\ndash 276},
		
		}
		
		\bib{Junca2002}{article}{
			author={Junca, S.},
			author={Rascle, M.},
			title={Strong relaxation of the isothermal {E}uler system to the heat
				equation},
			date={2002},
			ISSN={0044-2275},
			journal={Z. Angew. Math. Phys.},
			volume={53},
			number={2},
			pages={239\ndash 264},
		
		}
		
		\bib{Jungel1999}{article}{
			author={J\"{u}ngel, Ansgar},
			author={Peng, Yue-Jun},
			title={A hierarchy of hydrodynamic models for plasmas:
				zero-relaxation-time limits},
			date={1999},
			ISSN={0360-5302},
			journal={Comm. Partial Differential Equations},
			volume={24},
			number={5-6},
			pages={1007\ndash 1033},
			url={https://doi.org/10.1080/03605309908821456},
			,
		}
		
		\bib{Kato1975}{article}{
			author={Kato, Tosio},
			title={The {C}auchy problem for quasi-linear symmetric hyperbolic
				systems},
			date={1975},
			ISSN={0003-9527},
			journal={Arch. Rational Mech. Anal.},
			volume={58},
			number={3},
			pages={181\ndash 205},
			
		}
		
		\bib{Lattanzio2000}{article}{
			author={Lattanzio, Corrado},
			title={On the 3-{D} bipolar isentropic {E}uler--{P}oisson model for
				semiconductors and the drift--diffusion limit},
			date={2000},
			ISSN={0218-2025},
			journal={Math. Models Methods Appl. Sci.},
			volume={10},
			number={3},
			pages={351\ndash 360},
		
		}
		
		\bib{Lax1973}{book}{
			author={Lax, Peter~David.},
			title={Hyperbolic systems of conservation laws and the mathematical
				theory of shock waves},
			publisher={SIAM Regional Conf. Lecture, Philadelphia},
			date={1973},
			volume={11},
		
		}
		
		\bib{Li2021}{article}{
			author={Li, Yachun},
			author={Peng, Yue-Jun},
			author={Zhao, Liang},
			title={Convergence rates in zero-relaxation limits for {E}uler-{M}axwell
				and {E}uler-{P}oisson systems},
			date={2021},
			ISSN={0021-7824},
			journal={J. Math. Pures Appl. (9)},
			volume={154},
			pages={185\ndash 211},
			
		}
		
		\bib{Liu2019}{article}{
			author={Liu, Cunming},
			author={Guo, Zuji},
			author={Peng, Yue-Jun},
			title={Global stability of large steady-states for an isentropic
				{E}uler-{M}axwell system in {$\mathbb{R}^3$}},
			date={2019},
			ISSN={1539-6746},
			journal={Commun. Math. Sci.},
			volume={17},
			number={7},
			pages={1841\ndash 1860},
			
		}
		
		\bib{Liu2017}{article}{
			author={Liu, Cunming},
			author={Peng, Yue-Jun},
			title={Stability of periodic steady-state solutions to a non-isentropic
				{E}uler-{M}axwell system},
			date={2017},
			ISSN={0044-2275},
			journal={Z. Angew. Math. Phys.},
			volume={68},
			number={5},
			pages={Paper No. 105, 17},
		
		}
		
		\bib{Liu1987}{article}{
			author={Liu, Tai-Ping},
			title={Hyperbolic conservation laws with relaxation},
			date={1987},
			ISSN={0010-3616},
			journal={Comm. Math. Phys.},
			volume={108},
			number={1},
			pages={153\ndash 175},
			
		}
		
		\bib{Majda1984}{book}{
			author={Majda, A.},
			title={Compressible fluid flow and systems of conservation laws in
				several space variables},
			series={Applied Mathematical Sciences},
			publisher={Springer-Verlag, New York},
			date={1984},
			volume={53},
			ISBN={0-387-96037-6},
		
		}
		
		\bib{1990Semiconductor}{book}{
			author={Markowich, P.~A.},
			author={Ringhofer, C.~A.},
			author={Schmeiser, C.},
			title={Semiconductor equations},
			publisher={Springer-Verlag, Vienna},
			date={1990},
			ISBN={3-211-82157-0},
			url={https://doi.org/10.1007/978-3-7091-6961-2},
		
		}
		
		\bib{Peng2015}{article}{
			author={Peng, Yue-Jun},
			title={Stability of non--constant equilibrium solutions for
				{E}uler--{M}axwell equations},
			date={2015},
			ISSN={0021-7824},
			journal={J. Math. Pures Appl. (9)},
			volume={103},
			number={1},
			pages={39\ndash 67},
			url={https://www.sciencedirect.com/science/article/pii/S0021782414000397},
		
		}
		
		\bib{Peng2015b}{article}{
			author={Peng, Yue-Jun},
			title={Uniformly global smooth solutions and convergence of
				{E}uler--{P}oisson systems with small parameters},
			date={2015},
			ISSN={0036-1410},
			journal={SIAM J. Math. Anal.},
			volume={47},
			number={2},
			pages={1355\ndash 1376},
			
		}
		
		\bib{Peng2008}{article}{
			author={Peng, Yue-Jun},
			author={Wang, Shu},
			title={Rigorous derivation of incompressible e-{MHD} equations from
				compressible {E}uler-{M}axwell equations},
			date={2008},
			ISSN={0036-1410},
			journal={SIAM J. Math. Anal.},
			volume={40},
			number={2},
			pages={540\ndash 565},
		
		}
		
		\bib{Peng2011}{article}{
			author={Peng, Yue-Jun},
			author={Wang, Shu},
			author={Gu, Qilong},
			title={Relaxation limit and global existence of smooth solutions of
				compressible {E}uler--{M}axwell equations},
			date={2011},
			ISSN={0036-1410},
			journal={SIAM J. Math. Anal.},
			volume={43},
			number={2},
			pages={944\ndash 970},
			
		}
		
		\bib{Serre2000}{article}{
			author={Serre, Denis},
			title={Relaxations semi-lin\'{e}aire et cin\'{e}tique des syst\`emes de
				lois de conservation},
			date={2000},
			ISSN={0294-1449},
			journal={Ann. Inst. H. Poincar\'e Anal. Non Lin\'eaire},
			volume={17},
			number={2},
			pages={169\ndash 192},
			
		}
		
		\bib{Simon1987}{article}{
			author={Simon, Jacques},
			title={Compact sets in the space {$L^p(0,T;B)$}},
			date={1987},
			ISSN={0003-4622},
			journal={Ann. Mat. Pura Appl. (4)},
			volume={146},
			pages={65\ndash 96},
		
		}
		
		\bib{Ueda2012}{article}{
			author={Ueda, Yoshihiro},
			author={Wang, Shu},
			author={Kawashima, Shuichi},
			title={Dissipative structure of the regularity-loss type and time
				asymptotic decay of solutions for the {E}uler-{M}axwell system},
			date={2012},
			ISSN={0036-1410},
			journal={SIAM J. Math. Anal.},
			volume={44},
			number={3},
			pages={2002\ndash 2017},
		
		}
		
		\bib{Whitham1974}{book}{
			author={Whitham, G.~B.},
			title={Linear and nonlinear waves},
			publisher={Wiley-Interscience [John Wiley \& Sons], New York-London-Sydney},
			date={1974},
			note={Pure and Applied Mathematics},
			
		}
		
		\bib{XuJiang2008}{article}{
			author={Xu, Jiang},
			title={Relaxation-time limit in the isothermal hydrodynamic model for
				semiconductors},
			date={2008/09},
			ISSN={0036-1410},
			journal={SIAM J. Math. Anal.},
			volume={40},
			number={5},
			pages={1979\ndash 1991},
			url={https://doi.org/10.1137/080721893},
		
		}
		
		\bib{Xu2011}{article}{
			author={Xu, Jiang},
			title={Global classical solutions to the compressible {E}uler--{M}axwell
				equations},
			date={2011},
			ISSN={0036-1410},
			journal={SIAM J. Math. Anal.},
			volume={43},
			number={6},
			pages={2688\ndash 2718},
			
		}
		
		\bib{yong2004diffusive}{article}{
			author={Yong, Wen-An},
			title={Diffusive relaxation limit of multidimensional isentropic
				hydrodynamical models for semiconductors},
			date={2004},
			journal={SIAM J. Appl. Math.},
			volume={64},
			number={5},
			pages={1737\ndash 1748},
		}
		
		\bib{Zhao2021}{article}{
			author={Zhao, Liang},
			title={The rigorous derivation of unipolar euler{\textendash}maxwell
				system for electrons from bipolar euler{\textendash}maxwell system by
				infinity-ion-mass limit},
			date={2020},
			journal={Math. Methods Appl. Sci.},
			volume={44},
			pages={3418\ndash 3440},
		}
		
		\bib{Zhao2021a}{article}{
			author={Zhao, Liang},
			author={Xi, Shuai},
			title={Convergence rate from systems of balance laws to isotropic
				parabolic systems, a periodic case},
			date={2021},
			ISSN={18758576, 09217134},
			journal={Asymptot. Anal.},
			volume={124},
			pages={163\ndash 198},
		}
		
	\end{biblist}
\end{bibdiv}

\end{document}